\documentclass[letterpaper,leqno,11pt,twoside]{amsart}
\usepackage{amsmath,amsthm,amsfonts,amssymb,euscript,mathrsfs,graphics,color,latexsym,marginnote}
\usepackage[dvips]{graphicx}
\usepackage[margin=0.8in]{geometry}
\usepackage[toc,page]{appendix}
\usepackage{relsize}
\usepackage[shortlabels]{enumitem}
\usepackage[all]{xy}
\usepackage{dsfont}

\usepackage[dvipsnames]{xcolor}

\usepackage{hyperref}
\hypersetup{colorlinks=true, pdfstartview=FitV,linkcolor=blue!70!black,citecolor=red!70!black, urlcolor=green!60!black}
\definecolor{labelkey}{rgb}{0.6,0,0} 

\setlength{\parskip}{0.2em}

\numberwithin{equation}{section}

\newtheorem{theorem}{Theorem}[section]

\newtheorem{lemma}[theorem]{Lemma}
\newtheorem{proposition}[theorem]{Proposition}

\theoremstyle{definition}
\newtheorem{definition}{Definition}[section]
\theoremstyle{remark}
\newtheorem{remark}{Remark}[section]

\providecommand{\abs}[1]{\left\vert#1\right\vert}
\providecommand{\babs}[1]{\big\vert#1\big\vert}
\providecommand{\Babs}[1]{\Big\vert#1\Big\vert}
\providecommand{\oabs}[1]{\vert#1\vert}
\providecommand{\nm}[1]{\left\Vert#1\right\Vert}

\providecommand{\bnm}[1]{\big\Vert#1\big\Vert}

\def\ud{\mathrm{d}}
\def\dt{\partial_t}

\def\ls{\lesssim}
\def\gs{\gtrsim}

\def\rt{\rightarrow}
\def\r{\mathbb{R}}
\def\no{\nonumber}
\def\ue{e}
\def\ui{\mathrm{i}}

\newcommand{\ep}{\varepsilon}

\def\R{\mathbb{R}}
\def\C{\mathbb{C}}
\def\Z{\mathbb{Z}}
\def\N{\mathbb{N}}

\def\T{\mathbb{T}}

\def\ah{u}

\def\uh{u}

\def\wah{\widehat{\ah}}

\def\pe{\psi}

\def\ph{\psi^h}
\def\wph{\widehat{\ph}}
\def\qe{\phi}

\def\qh{\phi^h}
\def\wqh{\widehat{\qh}}

\def\id{\mathds{1}}
\def\rr{\mathcal{R}}

\def\k{\kappa}

\def\l{\ell}

\def\D{\Delta}

\def\DD{\Delta_{{\rm d}}}
\def\CC{\mathcal{C}}

\def\MM{\mathbf{M}}
\def\EE{\mathbf{E}}

\def\Ps{\mathsf{P}}

\newcommand{\qtq}[1]{\quad\text{#1}\quad}
\let\Re=\undefined
\DeclareMathOperator{\Re}{Re}
\let\Im=\undefined
\DeclareMathOperator{\Im}{Im}
\DeclareMathOperator{\supp}{supp}

\allowdisplaybreaks

\begin{document}

\title{Lattice Approximations to NLS}

\author[Zhimeng Ouyang]{Zhimeng Ouyang$^*$}
\address{Department of Mathematics, University of Chicago, Chicago, IL 60637, USA}
\email{ouyangzm9386@uchicago.edu}
\thanks{$^*$The author is partially supported by NSF grant DMS-2202824.}

\begin{abstract}
In this paper, we prove that 
solutions of the discrete NLS lattice model for $L^2$ initial data with double frequency components converge to solutions of a coupled system of cubic NLS.
\end{abstract}

\keywords{lattice model, continuum limit, almost conservation law, Strichartz estimate, nonlinear Schr\"odinger equation.}

\maketitle

\setcounter{tocdepth}{1}
\makeatletter
\def\l@subsection{\@tocline{2}{0pt}{2.5pc}{5pc}{}}
\makeatother
\tableofcontents

\section{Introduction}

Lattice models play a pivotal role in the investigation of microscopic multi-particle systems, with their continuum limits forming the foundation of the macroscopic effective theory. These models have found broad applications in condensed matter physics and numerical analysis.

As a prototypical example, we consider 
the $1$-d cubic discrete nonlinear Schr\"odinger equation for the evolution of a field $\ah_n(t)=\ah(t,n): \R\times\Z\rt\C$ 
\begin{align}\label{1-c.DNLS} \tag{DNLS} 
\ui\dt\ah_n &= -\big(\ah_{n+1}-2\ah_n+\ah_{n-1}\big)
\pm2|\ah_n|^2\ah_n \\
&=: -(\DD\ah)_n + \CC_n[\ah]  \no,
\end{align}
where $+$ corresponds to the defocusing case and $-$ corresponds to the focusing case.

To build the convergence from \eqref{1-c.DNLS} to its continuum counterpart, we introduce 
a small parameter $0<h \leq1$ as the length scale associated to the continuum approximation.
As $h\rt0$, 
ansatz of the form  
\begin{equation} \label{h-scaling}
    \ah_n(0) = h\pe_0(hn), \qquad
    \ah_n(t) = h\pe(h^2 t,hn)
\end{equation}
sends \eqref{1-c.DNLS} to the $1$-d cubic nonlinear Schr\"odinger equation 
\begin{equation}\label{1-c.NLS} \tag{NLS}
\ui\dt q = -\Delta q \pm2|q|^2 q ,
\end{equation}
provided that the solution is sufficiently smooth.
Therefore, the prefactor $h$ appearing in \eqref{h-scaling} indicates the precise scaling that balances the dispersion and nonlinear effects, which yields a continuum limit as $h\rt0$.


Our goal is to rigorously justify the continuum limit from \eqref{1-c.DNLS} to \eqref{1-c.NLS} for $L^2$ initial data with double frequency components.
Specifically, we will show that \emph{low-regularity} initial data of the form 
\begin{equation*}\label{dumb double data}
    \uh_n(0) = h \psi_0 (hn) +  (-1)^n h \phi_0 (hn),
\end{equation*}
which combines both slowly varying (low-frequency) and rapidly oscillating (high-frequency) parts, leads to solutions to a \emph{coupled} system of cubic NLS:
\begin{align}
    \ui\dt\pe&=-\Delta\pe\pm2\big(\abs{\pe}^2+2\abs{\qe}^2\big)\pe , 
    \label{NLS_low}\tag{${\rm NLS}_\psi$}\\
    \ui\dt\qe&=\Delta\qe\pm2\big(\abs{\qe}^2+2\abs{\pe}^2\big)\qe . 
    \label{NLS_high}\tag{${\rm NLS}_\phi$}
\end{align}

\medskip

\subsection{Problem Formulation}

In order to rigorously formulate our result, we first need to explain how we pass between functions on the real line $\R$ and the lattice $\Z$.

\smallskip
\subsubsection{Initial Data Sampling}

Given $\psi_0, \phi_0 \in L^2(\R)$.
We assemble the initial data with a mild smoothing (frequency truncation) before sampling:
\begin{align}\label{E:initial data}
\uh_n (0) = h \big[P_{\leq N} \psi_0\big] (hn) +  (-1)^n h \big[P_{\leq N} \phi_0\big] (hn).
\end{align}
Here $P_{\leq N}$ denotes a smooth projection which eliminates frequencies $|\xi|> 2N:= h^{-1+\gamma}$ for some $0<\gamma<1$ arbitrarily small.
Note that $N\to\infty$ as $h\to 0$, thus revealing the full irregularity of the initial data.

On the Fourier side for one of the $2\pi$-period $\theta\in\bigl[-\frac{\pi}{2},\frac{3\pi}{2}\bigr]$, we have (see \eqref{E:initial data''}) 
\begin{align} \label{E:initial data'}
    \widehat{\uh}(0,\theta) = \widehat{P_{\leq N} \pe_0}\big(\tfrac{\theta}{h}\big) + \widehat{P_{\leq N} \qe_0}\big(\tfrac{\theta-\pi}{h}\big) 
\end{align}
and so is supported near $0$ and $\pi$ for $\abs{\theta}\leq 2hN= h^{\gamma}<\frac{\pi}{2}$ and $\abs{\theta-\pi}\leq 2hN= h^{\gamma}<\frac{\pi}{2}$ (mod $2\pi$).
For the record,
we may set $0<h\leq h_0$ where $h_0\leq1$ is a small constant 
such that $\widehat{\uh}(0,\theta)$ is supported away from $\theta=\pm\frac{\pi}{2}$. 
As we wish to study the $h\rt0$ limit and so will only look at the scenario where $0<h\ll1$ is sufficiently small, this requirement is actually of no real consequence.

\smallskip
\subsubsection{Solution Reconstruction}

To compare the solution of \eqref{1-c.DNLS} to that of \eqref{NLS_low}-\eqref{NLS_high}, we generate two continuous functions $\ph(t,x)$ and $\qh(t,x)$ on the real line, from the single lattice function $\uh_n(h^{-2}t)$ via
\begin{align}
    \widehat{\psi^h}(t,\xi)&:= \widehat{\uh}(h^{-2}t, h\xi) \,\id_{(-\frac{\pi}2,\frac\pi 2)}(h\xi), \label{psi^h'}\\
    \widehat{\phi^h}(t,\xi)&:= e^{4\ui h^{-2}t}\, \widehat{\uh}(h^{-2}t, h\xi +\pi ) \,\id_{(-\frac{\pi}2,\frac\pi 2)}(h\xi). \label{phi^h'}
\end{align}
Essentially, here we split the Fourier transform $\widehat\uh$ into two pieces using a sharp cutoff to either semicircle; $\widehat{\psi^h}$ corresponds to the one around $0$ and $\widehat{\phi^h}$ corresponds to the other around $\pi$.
Note that both the reconstructed solutions $\psi^h$ and $\phi^h$ are still band-limited functions with $\supp(\widehat{\ph}),\,\supp(\widehat{\qh})\subseteq\bigl[-\frac{\pi}{2h},\frac{\pi}{2h}\bigr]$.

On the physical side, we write the equivalent expressions:
\begin{align}
    \psi^h(t,x) &:= h^{-1} \int_{|\theta|<\frac{\pi}2} e^{\ui \frac{x}{h}\theta}\,\widehat{\uh}(h^{-2}t,\theta) \,\tfrac{\ud\theta}{2\pi} 
    = \rr\big[\uh_n(h^{-2}t)\big](x) ,\label{psi^h}\\
    \phi^h(t,x) &:= h^{-1} e^{4\ui h^{-2}t} \int_{|\theta-\pi|<\frac{\pi}2} e^{\ui \frac{x}{h}(\theta-\pi)}\,\widehat{\uh}(h^{-2}t,\theta) \,\tfrac{\ud\theta}{2\pi} 
    = e^{4\ui h^{-2}t} \,\rr\big[(-1)^n\uh_n(h^{-2}t)\big](x) ,\label{phi^h}
\end{align}
through which we introduce the reconstruction operator $\rr:\ell^2_n(\Z)\rt L^2_x(\R)$ defined by
\begin{align}\label{R-def}
[\mathcal R a] (x) := h^{-1}\! \int_{-\frac\pi2}^{\frac\pi2} e^{\ui\frac{x}{h}\theta} \widehat a(\theta)\tfrac{\ud\theta}{2\pi} = \text{P.V.}\sum_{n\in\Z} \frac{\sin\!\left(\tfrac{\pi}{2h}(x-hn)\right)}{\pi(x-hn)}\cdot a_n
\end{align}
which embodies the spirit of the Nyquist–Shannon sampling theorem.

Restricted on the lattice, we can reverse the passage from $\uh_n$ to $\big(\ph,\qh\big)$ via
\begin{align} \label{u_n-ph,qh}
    \uh_n(t) = h\ph(h^2t,hn)+e^{-4\ui t}(-1)^n h\qh(h^2t,hn),
\end{align}
or equivalently on the Fourier side, 
\begin{align} \label{u_n-ph,qh'}
    \widehat{\uh}(t,\theta) \,\id_{[-\frac{\pi}{2},\frac{3\pi}{2}]}(\theta) 
    = \widehat{\ph}\big(h^2t,\tfrac{\theta}{h}\big) + e^{-4\ui t}\,\widehat{\qh}\big(h^2t,\tfrac{\theta-\pi}{h}\big).
\end{align}

Note also that the choice \eqref{E:initial data} of initial data corresponds to 
\begin{align} \label{ph,qh_0}
    \big[\ph,\qh\big](0,x) = \rr\big[\uh_n(0),(-1)^n\uh_n(0)\big](x)
    = \big[P_{\leq N} \psi_0, P_{\leq N} \phi_0\big](x)
    = P_{\leq h^{-1+\gamma}/2} \big[\psi_0, \phi_0\big](x) .
\end{align}

\medskip

\subsection{Main Result}

Our main result in this paper is the following:

\begin{theorem}\label{Thm:main}
Fix $(\psi_0,\phi_0)\in L^2(\R)$ and let $(\psi,\phi)\in \big(C_tL^2_x\cap L_{t,loc}^6L_x^6\big)(\R\times\R) $ denote the unique global solution of the coupled system \eqref{NLS_low}-\eqref{NLS_high} with this initial data.
Given $0<h\leq h_0\ll1$ sufficiently small, let $\uh_n(t) \in C_t\ell_n^2(\R\times\Z)$ be the global solution to \eqref{1-c.DNLS} with initial data $\uh_n(0)$ specified by \eqref{E:initial data} and let $\psi^h,\phi^h:\R\times\R\to \C$ be the corresponding continuum representatives of this solution built via the reconstruction formulas \eqref{psi^h},\eqref{phi^h}. 

Then for any $T>0$, as $h\to 0$ we have the continuum limit: 
the low- and high-frequency components of the reconstructed discrete solution converge to their respective continuum counterparts 
\begin{align}\label{E:T:main}
    \psi^h(t,x)\rt\psi(t,x) \qtq{and} \phi^h(t,x)\rt\phi(t,x)
    \quad \text{in \,$C_tL^2_x([-T,T]\times\r)$};
\end{align}
moreover, this yields the long-wave limit of the discrete model: $(\psi,\phi)$ describes the small-$h$ behavior of $\uh_n$ in the sense that 
\begin{align}\label{E:T:main-2}
   h^{\frac{1}{2}} \left\| h^{-1}\uh_n(h^{-2}t) - \pe(t,hn) - e^{-4\ui h^{-2}t}(-1)^n \qe(t,hn) \right\|_{\l_n^2} \rt 0
   \quad \text{uniformly for \,$|t|\leq T$}.
\end{align}
\end{theorem}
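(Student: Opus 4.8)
The plan is to prove Theorem~\ref{Thm:main} by combining three ingredients: (i) a \emph{discrete-to-continuum Strichartz theory} for the linear propagator $e^{\ui t \DD}$, uniform in $h$; (ii) a local well-posedness/stability comparison between the reconstructed flow and the coupled system \eqref{NLS_low}--\eqref{NLS_high} on a short time interval whose length depends only on the $L^2$ norms of $(\psi_0,\phi_0)$; and (iii) an \emph{almost-conservation law} for a suitable modified mass of $\uh_n$ that upgrades the short-time comparison to an arbitrary fixed time $[-T,T]$. I would first record the exact relation between $\uh_n$ and the pair $(\psi^h,\phi^h)$ given in \eqref{u_n-ph,qh}--\eqref{u_n-ph,qh'}, and observe that \eqref{E:T:main-2} is literally the Plancherel restatement of \eqref{E:T:main}: since $\rr$ is an isometry up to the $h^{1/2}$ weight and the cutoffs $\id_{(-\pi/2,\pi/2)}(h\xi)$ only lose a negligible tail (controlled by the frequency truncation at scale $h^{-1+\gamma}$ in \eqref{E:initial data} together with finite-speed-of-propagation-in-frequency of \eqref{1-c.DNLS}), it suffices to prove the $C_tL^2_x$ convergence \eqref{E:T:main}.

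The core step is to derive, from \eqref{1-c.DNLS} via the ansatz relations, the \emph{system satisfied by} $(\psi^h,\phi^h)$ and to show it is a small perturbation of \eqref{NLS_low}--\eqref{NLS_high}. Plugging \eqref{u_n-ph,qh} into the cubic nonlinearity $\CC_n[\uh] = \pm 2|\uh_n|^2\uh_n$ and expanding $|h\psi^h + e^{-4\ui t}(-1)^n h\phi^h|^2(h\psi^h + e^{-4\ui t}(-1)^n h\phi^h)$, the terms of frequency near $0$ produce $\pm 2(|\psi^h|^2 + 2|\phi^h|^2)\psi^h$ (the factor $2$ on the cross term is exactly the combinatorial count of how many ways a $\psi\bar\psi\psi$-type monomial with one $\psi$ replaced by a $\phi\bar\phi$ pair can arise), the terms near $\pi$ produce $\pm 2(|\phi^h|^2 + 2|\psi^h|^2)\phi^h$, the sign flip $\Delta \to -\Delta$ for $\phi$ comes from the dispersion relation $2-2\cos\theta$ having a maximum (concave) at $\theta=\pi$, and the $e^{4\ui h^{-2}t}$ phase in \eqref{phi^h'} is chosen precisely to kill the resulting constant frequency shift $4$. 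The remaining monomials are \emph{non-resonant}: they carry frequencies near $\pm\pi/2$ (e.g. $\psi\psi\bar\phi$ lives near $-\pi$, i.e. near $+\pi$ mod $2\pi$ only if all three are of the same type — the genuinely off-diagonal ones sit near $\pm\pi/2$) and are annihilated by the sharp cutoffs $\id_{(-\pi/2,\pi/2)}(h\xi)$ once $h$ is small enough that the data, hence (by a frequency-localization argument up to time $h^{-2}T$) the solution, stays supported in $|\theta|\le h^\gamma$ and $|\theta-\pi|\le h^\gamma$. What is left is then an error of two types: a \emph{discretization error} (replacing $2-2\cos(h\xi)$ by $(h\xi)^2$, i.e. $|\nabla_{\rm d}|$ by $|\nabla|$ on the relevant frequency support, which is $O(h^{2\gamma})$ in symbol and $O(h^{2\gamma})\|\cdot\|_{L^2}$ after using the frequency support), and a \emph{truncation error} (the difference between $P_{\le N}$-data and the true data, which $\to 0$ in $L^2$). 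Both feed into the Duhamel/Strichartz stability argument.

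Concretely, I would run a Strichartz-based perturbation lemma: writing $\Psi^h := (\psi^h,\phi^h)$ and $\Psi := (\psi,\phi)$, both solve the same cubic system up to the error $\mathcal E^h = (\text{dispersion error}) + (\text{non-resonant terms killed by cutoff}) + (\text{commutator of cutoff with the flow})$, with $\|\mathcal E^h\|_{L^1_tL^2_x([-\tau,\tau])} + \|\Psi^h(0)-\Psi(0)\|_{L^2_x} \to 0$ as $h\to 0$, on a time interval $\tau = \tau(\|\psi_0\|_2,\|\phi_0\|_2)$; the uniform-in-$h$ Strichartz inequality $\|e^{\ui t\DD}\rr[a]\|_{L^6_{t,x}} \lesssim \|a\|_{\ell^2}$ (standard for the discrete Schr\"odinger semigroup after reconstruction, cf.\ the band-limited support of $\widehat{\psi^h},\widehat{\phi^h}$) plus the continuum Strichartz for $e^{\ui t\Delta}$ then give $\|\Psi^h - \Psi\|_{C_tL^2_x \cap L^6_{t,x}([-\tau,\tau])} \to 0$. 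To reach the fixed time $T$, one cannot iterate naively because the number of steps $T/\tau$ is fixed but the error could in principle be amplified; here the key is that the relevant \emph{a priori} bound — the $L^2$ norm of $\psi^h$ and of $\phi^h$ separately — is almost conserved. Exact mass conservation of \eqref{1-c.DNLS} gives $\|\uh_n(t)\|_{\ell^2}^2 = \|\uh_n(0)\|_{\ell^2}^2$, hence $\|\psi^h\|_{L^2}^2 + \|\phi^h\|_{L^2}^2$ is exactly conserved, but I need each piece controlled; this is where an \emph{almost conservation law} enters: the flux between the two frequency bands is governed by the non-resonant monomials, which are $O(h^{\gamma})$-small after integration by parts in time (normal-form / modified-energy argument exploiting the frequency gap of size $\sim\pi/2$), so $\|\psi^h(t)\|_2^2$ and $\|\phi^h(t)\|_2^2$ each stay within $o(1)$ of their initial values uniformly on $|t|\le h^{-2}T$ in the original variables, i.e.\ on $|t|\le T$ after rescaling. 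With this uniform bound in hand, I partition $[-T,T]$ into $O(T/\tau)$ subintervals on each of which the perturbation lemma applies with the \emph{same} $\tau$, and chain the estimates: the error at the end of one subinterval becomes the data error for the next, and since each application contracts (for $h$ small) up to an additive $o(1)$, after finitely many steps the total error is still $o(1)$.

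The main obstacle I anticipate is precisely the passage from short time to the fixed time $T$ — i.e.\ establishing the uniform-in-$h$ a~priori control of the two band-masses separately over the long time interval $|t|\le h^{-2}T$. Exact mass conservation controls only the sum, and the cubic interaction does transfer $\ell^2$ mass between the $\theta\approx 0$ and $\theta\approx\pi$ bands; showing this transfer is negligible requires an almost-conservation (I-method–type modified mass / normal-form) argument whose error terms must be shown to be $o(1)$ \emph{after} multiplying by the long time $h^{-2}T$, which forces the normal-form gain to beat $h^{-2}$ — achievable because each non-resonant monomial gains a full power of the frequency gap and, crucially, a factor $h^2$ from the rescaled time derivative $\dt = h^2\partial_{\text{slow}}$, so the net gain is $h^{\gamma}\cdot h^2 \cdot h^{-2} = h^\gamma \to 0$, but the bookkeeping (cubic-in-$L^2$ terms are not directly bounded by conserved quantities, so one must also invoke the local $L^6_{t,x}$ Strichartz bound on each subinterval and bootstrap) is delicate. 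The secondary technical point is verifying that the sharp cutoffs $\id_{(-\pi/2,\pi/2)}(h\xi)$ commute with the flow up to acceptable errors, which relies on the data being frequency-localized in $|\theta|\le h^\gamma \ll \pi/2$ and an argument that \eqref{1-c.DNLS} does not create significant mass near $\theta = \pm\pi/2$ on the relevant time scale — a quantitative frequency-localization estimate that I would prove by the same almost-conservation method applied to a Fourier multiplier supported near $\pm\pi/2$.
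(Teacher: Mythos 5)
Your route --- a direct Duhamel/Strichartz perturbation argument comparing $(\ph,\qh)$ to $(\pe,\qe)$, iterated over $O(T/\tau)$ subintervals --- is genuinely different from the paper's, which proves precompactness of the family $\{(\ph,\qh)\}$ in $C_tL^2_x$ (uniform bounds, space/time equicontinuity, tightness), shows every subsequential limit satisfies the coupled Duhamel formulas \eqref{duhamel 1}--\eqref{duhamel 2}, and concludes by the conditional uniqueness of $L^2$ solutions. The compactness route is chosen precisely because for $L^2$ data there is no rate of convergence available, and your quantitative scheme hits that wall at its first step: the claim that the discretization error is ``$O(h^{2\gamma})$ in symbol'' and hence $O(h^{2\gamma})\nm{\cdot}_{L^2}$ is false on the actual frequency support of the data. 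The relative symbol error $\abs{2-2\cos(h\xi)-(h\xi)^2}\ls (h\xi)^2\,h^{2\gamma}$ is indeed small, but what enters Duhamel is the accumulated phase $h^{-2}t\,\abs{2-2\cos(h\xi)-(h\xi)^2}\sim t\,h^2\xi^4$, which for $\abs{\xi}\sim h^{-1+\gamma}$ and $t\sim1$ is of size $h^{-2+4\gamma}\to\infty$. The discrete and continuum linear propagators are therefore \emph{not} close on the upper portion of the support of $P_{\leq N}\psi_0$ with $N\sim h^{-1+\gamma}$, so $\nm{\mathcal{E}^h}_{L^1_tL^2_x}\to0$ fails as stated. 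To rescue this you must first split at a fixed scale $K$ independent of $h$, prove that frequencies above $K$ carry uniformly small mass for all $\abs{t}\le T$ --- which is exactly the paper's almost-conservation/spatial-equicontinuity step and yields only an $\varepsilon/3$ argument with no rate --- and only then perturb on the low frequencies; at that point you have essentially rebuilt the compactness proof.

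Two further points are wrong or unsupported. First, the off-diagonal cubic monomials do not ``sit near $\pm\pi/2$'': $(\qh)^2\overline{\ph}$ has spatial frequency near $2\pi\equiv 0$ and $(\ph)^2\overline{\qh}$ near $-\pi\equiv\pi$, so they survive the sharp cutoffs $\id_{(-\frac{\pi}{2},\frac{\pi}{2})}(h\xi)$; they are eliminated only by the temporal oscillations $e^{\mp 8\ui h^{-2}t}$ via a Riemann--Lebesgue/non-stationary-phase-in-time argument, as in \eqref{non-expansion-1}--\eqref{non-expansion-2}. Second, the uniform bound $\bnm{e^{\ui\tau\DD}\rr[a]}_{L^6_{t,x}}\ls\nm{a}_{\ell^2}$ is not ``standard from band-limitedness'': $(6,6)$ is not an admissible pair for $e^{\ui\tau\DD}$ because the discrete dispersion relation has inflection points at $\theta=\pm\frac{\pi}{2}$, and the reconstruction cutoff at exactly $\abs{\theta}<\frac{\pi}{2}$ does not keep the solution away from them. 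Recovering continuum-admissible Strichartz pairs requires showing the nonlinear solution keeps its $\l^2$-mass concentrated near $\theta=0,\pi$ for times $\abs{\tau}\le h^{-2}T$, which in the paper is the bootstrap of Proposition~\ref{Prop:Schrodinger-Dynamics} fed by the almost conservation law of Proposition~\ref{Prop:ACL}; this is the technical heart of the argument and cannot be assumed. (On the plus side, your observation that \eqref{E:T:main-2} follows from \eqref{E:T:main} by the sampling isometry, and your identification of the coupled nonlinearity and of the sign flip $\Delta\to-\Delta$ from the concavity of $\omega_{\rm d}$ at $\theta=\pi$, agree with the paper.)
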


\begin{remark}
Some remarks on the main theorem:
\begin{enumerate}
    \item 
    This result suggests that a sole NLS does not suffice to encapsulate the lattice model  dynamics in such a low-regularity setting, which is reminiscent of the thermal equilibrium state.
    In particular, the high-frequency component does not converge directly to NLS without appropriate transformations.
    \item 
    Compared to the integrable discretization Ablowitz--Ladik system (AL) which converges to a decoupled NLS (cf. \cite{AA021}), 
    the presence of interactions of the two frequency components for \eqref{1-c.DNLS} is exactly due to the different structure of nonlinearity.
    This highlights the discrepancy between the integrable and non-integrable systems. 
    \item 
    Our result readily implies the simpler case of single frequency component, where the limit should solve the single \eqref{1-c.NLS}.
\end{enumerate}
\end{remark}

To recap, our problem formulation and main result can be illustrated as follows:  

\begin{equation*}
\xymatrix@C=9pc@R=9pc{
\big[\pe_0,\qe_0\big](x) \ar[r]^{\txt{\scriptsize frequency truncation}}_{\abs{\xi}\leq N\sim h^{-1+}} \ar[d]_{\txt{\eqref{NLS_low}\\\eqref{NLS_high}}} & 
\big[P_{\leq N} \psi_0,P_{\leq N} \phi_0\big] \ar[r]^(.55){\txt{\scriptsize sampling \eqref{E:initial data}}}_(.55){\txt{\scriptsize (scaling, phase-rotation)}} \ar@{.>}@(dl,dr)[l]^(.5){\txt{\scriptsize converge in $L^2_x$ as $h\!\rt\!0$}} &
\uh_n(0) \ar[d]^{\txt{\eqref{1-c.DNLS}}} \\
\big[\pe,\qe\big](t,x) & 
\big[\ph,\qh\big](t,x) \ar@{-->}[l]_(.5){\txt{\small\textbf{continuum limit \eqref{E:T:main}}}}^(.5){\txt{\scriptsize converge in $C_tL^2_x$ as $h\!\rt\!0$}} \ar@{.>}[u]|-{\txt{\scriptsize coincide at $t\!=\!0$\\\scriptsize\eqref{ph,qh_0}}} \ar@{.>}@(ur,ul)[r]^(.5){\txt{\scriptsize \eqref{u_n-ph,qh}}} & 
\uh_n(t) \ar[l]_(.42){\txt{\scriptsize reconstruction \eqref{psi^h}\eqref{phi^h}}}^(.42){\txt{\scriptsize (inverse scaling, rotation)}} \ar@{-->}@(d,d)[ll]^(.5){\txt{\small\textbf{long-wave limit \eqref{E:T:main-2}}}}
}
\end{equation*}

\medskip

\subsection{Method}

Our general strategy synthesizes compactness, almost conservation laws, and Strichartz-based techniques.
In particular, precompactness follows from three properties: uniform boundedness, equicontinuity, and tightness.

Complete integrability of the Ablowitz–Ladik system (AL) plays a key role in the proof of \cite{AA021}, which relies on various conserved quantities and the generating function of a family of infinitely many conservation laws. 
This is very different from what we shall be doing:
Unlike AL, the general DNLS is not integrable and lacks enough conserved quantities, so we have to exploit other new ingredients to close a compactness-uniqueness scheme; 
indeed, much of our analysis is based on the method of almost conservation laws combined with Strichartz estimates, especially in obtaining spatial equicontinuity and in controlling the frequency localization for the double frequency components setting.

It is reasonable to imagine that our approach could be expanded to treat more continuum limit problems for non-integrable lattice models.

\bigskip

\section{Preliminaries}

\subsection{Notation and Convention}

Throughout this paper, $C$ will denote a universal constant that does not depend on $h$, and which may vary from one line to another.
We write $X \lesssim Y$ or $Y\gtrsim X$ whenever $X\leq CY$ for some constant $C>0$.  We write $X\simeq Y$ to mean $X\lesssim Y$ and $Y\lesssim X$.  
We say that $X\ll Y$ if $X\leq cY$ for some small constant $c>0$, again not depending on $h$.
We may use $C \gg 1$ to denote various large finite constants, and $0 < c \ll 1$ to denote various small constants.
If $C$ or $c$ depends on some additional parameters, we will indicate this with subscripts.
We use also the notation $X+ := X+\ep$ for some arbitrarily small $0 < \ep \ll 1$ and similarly $X- := X-\ep$.

Our convention for the Fourier transform on $\r$ is as follows:
\begin{align}\label{FT-c}
    \widehat{f}(\xi) =[\mathscr{F}\!f](\xi) =\int_\R f(x) e^{-\ui x\xi}\,\ud x \qquad\text{so that}\qquad f(x) =\big[\mathscr{F}^{-1}\widehat{f}\,\big](x) =\int_\R\widehat{f}(\xi) e^{\ui x\xi}\,\tfrac{\ud\xi}{2\pi},
\end{align}
while for the Fourier series (in the discrete case), we employ 
\begin{align}\label{FS-d}
\widehat a(\theta) =[\mathscr{F}_{\!\rm d}a](\theta) = \sum_{n\in \Z} a_n e^{-\ui n\theta} \qquad\text{so that}\qquad a_n =\big[\mathscr{F}_{\!\rm d}^{-1}\widehat{a}\,\big](n) = \int_{-\pi}^{\pi} \widehat a(\theta) e^{\ui n\theta}\, \tfrac{\ud\theta}{2\pi}.
\end{align}
Here the frequency variable $\theta\in\R/2\pi\Z$; 
we will sometimes use the shorthand $\T:=\R/2\pi\Z$ for this periodic domain (`circle' in $1$-d).

With these definitions, the Plancherel/Parseval identities read
\begin{align}\label{Plancherel}
    \int_\R \bigl|f(x)\bigr|^2\, \ud x = \int_\R \bigl|\widehat{f}(\xi)\bigr|^2\,\tfrac{\ud\xi}{2\pi} \qquad\text{and}\qquad
    		\sum_{n\in \Z} |a_n|^2= \int_{-\pi}^{\pi} \bigl|\widehat a(\theta)\bigr|^2\,\tfrac{\ud\theta}{2\pi},
\end{align}
and equivalently, 
\begin{align}\label{Parseval}
    \int_\R f(x)\overline{g(x)}\, \ud x = \int_\R \widehat{f}(\xi)\overline{\widehat{g}(\xi)}\,\tfrac{\ud\xi}{2\pi} \qquad\text{and}\qquad
    		\sum_{n\in \Z} a_n\overline{b_n} = \int_{-\pi}^{\pi} \widehat{a}(\theta) \overline{\widehat{b}(\theta)}\,\tfrac{\ud\theta}{2\pi}.
\end{align}
In addition, we write the convolution property for $\mathscr{F}_{\!\rm d}$ as a paraproduct: 
\begin{align} \label{convolution-paraproduct}
    \widehat{ab}(\theta) = \int_{-\pi}^{\pi} \widehat a(\theta-\eta) \widehat b(\eta)\, \tfrac{\ud\eta}{2\pi} = \int_{\theta=\theta_1+\theta_2} \widehat a(\theta_1) \widehat b(\theta_2) ,
\end{align}
and consequently, 
\begin{align} \label{Parseval-paraproduct}
    \sum_{n\in\Z} (a_1)_n (a_2)_n\cdots (a_k)_n =  \int_{\theta_1+\theta_2+\cdots+\theta_k=0} \widehat a_1(\theta_1) \widehat a_2(\theta_2)\cdots \widehat a_k(\theta_k),
\end{align}
where $\int_{\theta_1+\theta_2+\cdots+\theta_k=0}$ denotes integration with respect to the hyperplane’s measure $2\pi\delta_0(\theta_1+\theta_2+\cdots+\theta_k)\tfrac{\ud\theta_1}{2\pi}\tfrac{\ud\theta_2}{2\pi}\cdots\tfrac{\ud\theta_k}{2\pi}$ with $\delta_0$ being the one-dimensional Dirac delta function.

Let $\varphi$ be a smooth radial bump function supported in the ball $|\xi| \leq 2$ such that $\varphi(\xi)=1$ for all $|\xi| \leq 1$. For each dyadic number $N,M \in 2^\Z$, we define the Littlewood--Paley projection operators
\begin{align*}
&\widehat{P_{\leq N}f}(\xi) :=  \varphi\bigl(\tfrac{\xi}N\bigr)\widehat f (\xi), \qquad 
P_{N} :=  P_{\leq N} - P_{\leq N/2} ,\qquad
P_{> N} :=  1-P_{\leq N} ,\\
&P_{M<\cdot\,\leq N} :=  P_{\leq N} - P_{\leq M} = \sum_{M<N'\leq N} P_{N'} .
\end{align*}
Any sum over the dummy variable $N$, $N'$ or $M$ is understood to be over dyadic numbers unless otherwise specified.
We may also write $f_{\leq N}$ for $P_{\leq N} f$ and similarly for the other operators.
Like all Fourier multiplier operators, the Littlewood–Paley operators commute with differential operators such as $\ui\partial_t + \Delta$, as well as with the propagator $e^{\ui t\Delta}$.
They are bounded (uniformly in $N$) on $L^p(\R)$ for every $1 \leq p \leq\infty$.

We will also frequently use other variants of $P$ to generally denote a Fourier truncation operator (to be specified).
In particular, 
we introduce the Fourier projection adapted to our discrete setting which treats frequencies near $\theta=0$ and $\theta=\pi$ simultaneously:

\begin{definition} \label{Def:Ps}
Given a parameter $0<\lambda<1$, 
let $\Ps_{\!<\lambda}$ be the sharp Fourier cutoff (for discrete functions) defined via
\begin{align} \label{Ps-def}
\widehat {\Ps_{\!<\lambda\,} a} (\theta) := \id_{\mathsf G_\lambda\!}(\theta) \,\widehat a(\theta) \qtq{with} \mathsf G_\lambda:=\bigl\{\theta\in\R/2\pi\Z : \abs{\sin(\theta)} < \lambda\bigr\} .
\end{align}
(Since $\frac{\sin(\theta)}{\theta}\in\left[\tfrac{2}{\pi},1\right]$ i.e. $\sin(\theta)\simeq\theta$ for $\abs{\theta}\leq\frac{\pi}{2}$, we see $\abs{\sin(\theta)} < \lambda$ $\Leftrightarrow$ $\abs{\theta-n\pi}<c\lambda$ for some $n\in\Z$ and $c>0$.)

Define also $\Ps_{\geq\lambda} := 1-\Ps_{\!<\lambda}$ with the Fourier symbol $\id_{\mathsf G_\lambda^c\!}(\theta)$, the indicator function to the complement set $\mathsf G_\lambda^c$.
\end{definition}

\medskip

\subsection{Norms}

In this paper, we will employ the Lebesgue spaces $L^p$/$\l^p$ as well as the spacetime spaces $L_t^q L_x^p$/$L^q_t\l^p_n$ for either continuum or discrete version, equipped with the norms:
\begin{align*}
    \|F\|_{L_t^q L_x^p(\R\times\R)} &:=\bigg[\int_{\R}\Bigl(\int_{\R} \big|F(t,x)\big|^p \,\ud x \Bigr)^{\frac{q}{p}} \,\ud t\bigg]^{\frac{1}{q}},\\
    \nm{\ah}_{L^q_t\l^p_n(\R\times\Z)} &:= \bigg[\int_{\R}\Big(\sum_{n\in\Z}\babs{\ah_n(t)}^p\Big)^{\frac{q}{p}}\ud t\bigg]^{\frac{1}{q}},
\end{align*}
with the usual modifications when $q$ or $p$ is $\infty$, or when the domain is replaced by some smaller subset.
When $q=p$ we abbreviate $L_t^q L_x^p$ by $L^q_{t,x}$.

Unlike general $L^p$ spaces which do not have the inclusion relation,
there is a simple embedding result for the $\l^p$ spaces of infinite sequences/discrete functions:

\begin{lemma}[Embedding for $\l^p$ spaces] \label{Lem:l^p-embedding}
The $\l^p$ spaces are increasing in $p\in[1,\infty]$, with the inclusion operator being continuous: 
For $1\leq p<q\leq\infty$, one has $\l^p\hookrightarrow\l^q$ with
\begin{align} \label{l^p-embedding}
    \nm{a}_{\l^q} \leq \nm{a}_{\l^p} 
    \quad \text{for any }\: a\in\l^p.
\end{align}
\end{lemma}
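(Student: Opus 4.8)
\textbf{Proof plan for Lemma~\ref{Lem:l^p-embedding}.}
The statement is the standard nesting of discrete $\ell^p$ spaces, and I would prove it by the usual normalization-and-monotonicity argument. First I would dispose of the trivial cases: if $a=0$ the inequality is immediate, and if $q=\infty$ then for each fixed $n$ we have $\abs{a_n}^p \leq \sum_{m\in\Z}\abs{a_m}^p = \nm{a}_{\ell^p}^p$, so taking the supremum over $n$ and then the $p$-th root gives $\nm{a}_{\ell^\infty}\leq\nm{a}_{\ell^p}$. Thus I may assume $1\leq p<q<\infty$ and $a\in\ell^p\setminus\{0\}$.

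For the main case, set $M:=\nm{a}_{\ell^p}$ and replace $a$ by $a/M$, so it suffices to show that $\nm{a}_{\ell^p}=1$ implies $\nm{a}_{\ell^q}\leq 1$. Since $\sum_n\abs{a_n}^p=1$, each term satisfies $\abs{a_n}^p\leq 1$, hence $\abs{a_n}\leq 1$, and therefore $\abs{a_n}^q=\abs{a_n}^{q-p}\abs{a_n}^p\leq\abs{a_n}^p$ because $q-p>0$ and $\abs{a_n}\leq 1$. Summing over $n\in\Z$ yields $\nm{a}_{\ell^q}^q=\sum_n\abs{a_n}^q\leq\sum_n\abs{a_n}^p=1$, and taking $q$-th roots gives $\nm{a}_{\ell^q}\leq 1$. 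Undoing the normalization, $\nm{a}_{\ell^q}\leq\nm{a}_{\ell^p}$ for all $a\in\ell^p$, which in particular shows $a\in\ell^q$ and that the inclusion map is bounded with operator norm at most $1$.

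There is really no substantive obstacle here; the only point requiring a moment's care is handling $q=\infty$ separately (so that the exponent arithmetic $\abs{a_n}^{q-p}\leq 1$ makes sense) and noting that the argument is insensitive to the index set being $\Z$ rather than $\N$, since everything is a sum of nonnegative terms. I would keep the write-up to a few lines, emphasizing the termwise bound $\abs{a_n}\leq\nm{a}_{\ell^p}$ as the crux.
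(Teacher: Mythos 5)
Your proof is correct and follows essentially the same route as the paper: normalize to $\nm{a}_{\l^p}=1$, deduce the termwise bound $\abs{a_n}\leq 1$, and compare $\abs{a_n}^q\leq\abs{a_n}^p$ before summing, with the $q=\infty$ case handled separately. Nothing is missing.
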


\begin{proof}
Since the inequality above and the $\l^p$-norms are homogeneous in all entries $a_n$, it is sufficient to prove \eqref{l^p-embedding} under the normalization $\nm{a}_{\l^p}=1$.
In this case, $\nm{a}_{\l^p}=1$ implies $\abs{a_n}\leq1$ for all $n$, 
which means $\nm{a}_{\l^\infty} \leq 1$.
For $q\in(\,p,\infty)$, one also has $\sum_n\abs{a_n}^q \leq \sum_n\abs{a_n}^p =1$. 
(Alternatively, by H\"older/interpolation, $\nm{a}_{\l^q} \leq \nm{a}_{\l^\infty}^{1-p/q} \nm{a}_{\l^p}^{p/q} \leq 1$.)
Together, we have shown that $\nm{a}_{\l^q} \leq 1 = \nm{a}_{\l^p}$ for $1\leq p<q\leq\infty$, as desired.
\end{proof}

\begin{remark}
This embedding result for the $\l^p$ spaces can be understood as a special case of the one for functions with finite frequency: 
Let $\Omega\subset\R^d$ be a compact subset and $L^p_\Omega(\R^d):=\big\{f\in L^p(\R^d): \supp(\widehat{f}\,)\subseteq\Omega\big\}$.
Then for $1\leq p<q\leq\infty$, one has $L^p_\Omega(\R^d)\hookrightarrow L^q_\Omega(\R^d)$, with $\nm{f}_{L^q} \ls \nm{f}_{L^p}$ for any $f\in L^p_\Omega(\R^d)$.
\end{remark}

\medskip

\subsection{Passage Between Lattice and Continuum Functions}

Let us now discuss the passage between functions on $\R$ and those on the $h\Z$ lattice.  We need to examine both directions: the initial data $\uh_n(0)$ is constructed from the data $(\psi_0,\phi_0)$ for \eqref{NLS_low}-\eqref{NLS_high} via \eqref{E:initial data}; the resulting discrete solutions $\uh_n(t)$ of \eqref{1-c.DNLS} are then transferred back to the real line via \eqref{psi^h},\eqref{phi^h}.

With our conventions for the Fourier transform/series, we record
the Poisson summation formula: 

\begin{lemma} \label{Poisson-summation}
For Schwartz-class functions $f\in\mathscr{S}(\R)$, we have
\begin{align*}
    \sum_{n\in\Z} hf(hn)e^{-\ui n\theta}=\sum_{m\in\Z}\widehat{f}\left(\tfrac{\theta+2\pi m}{h}\right) .
\end{align*}
In particular, for $f\in L^2(\R)$ with $\supp(\widehat f\,)\subseteq\bigl[-\frac\pi h,\frac \pi h\bigr]$ and $|\theta|\leq\pi$, 
\begin{align*}
    \sum_{n\in\Z}hf(hn)e^{-\ui n\theta}=\widehat{f}\big(\tfrac{\theta}{h}\big).
\end{align*}
\end{lemma}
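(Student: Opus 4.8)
\textbf{Proof proposal for the Poisson summation formula (Lemma~\ref{Poisson-summation}).}

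The plan is to prove the first identity by treating the left-hand side as a (generalized) Fourier series in $\theta$ and identifying its Fourier coefficients via Fourier inversion on $\R$, then to deduce the second identity as an immediate special case. Concretely, fix $f\in\mathscr{S}(\R)$ and define $F(\theta):=\sum_{n\in\Z}hf(hn)e^{-\ui n\theta}$, which converges absolutely and defines a smooth $2\pi$-periodic function since $\{hf(hn)\}_n$ is rapidly decaying. By construction, $F$ is precisely the discrete Fourier series $\widehat{a}(\theta)$ of the sequence $a_n:=hf(hn)$, so its Fourier coefficients in the sense of \eqref{FS-d} are $a_n=hf(hn)$; that is, $\int_{-\pi}^{\pi}F(\theta)e^{\ui n\theta}\,\tfrac{\ud\theta}{2\pi}=hf(hn)$. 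On the other hand, I will show that the right-hand side $G(\theta):=\sum_{m\in\Z}\widehat f\!\left(\tfrac{\theta+2\pi m}{h}\right)$ is also $2\pi$-periodic (obvious, by relabeling $m$), is smooth, and has the \emph{same} Fourier coefficients. For the latter, compute
\begin{align*}
    \int_{-\pi}^{\pi}G(\theta)e^{\ui n\theta}\,\tfrac{\ud\theta}{2\pi}
    = \sum_{m\in\Z}\int_{-\pi}^{\pi}\widehat f\!\left(\tfrac{\theta+2\pi m}{h}\right)e^{\ui n\theta}\,\tfrac{\ud\theta}{2\pi}
    = \int_{\R}\widehat f\!\left(\tfrac{\theta}{h}\right)e^{\ui n\theta}\,\tfrac{\ud\theta}{2\pi},
\end{align*}
where in the last step I unfold the sum over $m$ together with the integral over $[-\pi,\pi)$ into a single integral over $\R$ using periodicity of $e^{\ui n\theta}$ in $\theta\mapsto\theta+2\pi m$; then the change of variables $\xi=\theta/h$ turns this into $h\int_{\R}\widehat f(\xi)e^{\ui h n\xi}\,\tfrac{\ud\xi}{2\pi}=hf(hn)$ by the Fourier inversion formula \eqref{FT-c} evaluated at $x=hn$. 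Since $F$ and $G$ are both continuous $2\pi$-periodic functions with identical Fourier coefficients, they coincide, which is the claimed identity.

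For the justification of interchanging sum and integral and of the convergence claims, I will invoke the Schwartz decay of $f$ and $\widehat f$: $\{hf(hn)\}_n\in\ell^1$ gives uniform (hence $L^1([-\pi,\pi])$) convergence of the series for $F$, while $\widehat f\in\mathscr{S}(\R)\subset L^1(\R)$ gives $\sum_m\|\widehat f(\tfrac{\cdot+2\pi m}{h})\|_{L^1([-\pi,\pi])}=\|\widehat f(\tfrac{\cdot}{h})\|_{L^1(\R)}<\infty$, which justifies the interchange in the display above by Fubini--Tonelli and simultaneously shows $G\in L^1([-\pi,\pi])$ with an absolutely convergent defining series. Continuity of $G$ follows from dominated convergence together with the decay of $\widehat f$. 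No regularity beyond $\widehat f\in L^1$ is actually needed for the coefficient computation, but Schwartz class makes all manipulations transparent and matches the hypothesis.

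For the second (band-limited) assertion, suppose $f\in L^2(\R)$ with $\supp(\widehat f\,)\subseteq[-\tfrac{\pi}{h},\tfrac{\pi}{h}]$ and take $|\theta|\le\pi$. Then for each $m\ne 0$, the argument $\tfrac{\theta+2\pi m}{h}$ has absolute value at least $\tfrac{2\pi|m|-\pi}{h}\ge\tfrac{\pi}{h}$, with equality only at the endpoint; since $\widehat f$ is supported in $[-\tfrac{\pi}{h},\tfrac{\pi}{h}]$, all terms with $m\ne 0$ vanish (the measure-zero set of endpoints being irrelevant, or being excluded by interpreting the support as open, consistent with the sharp cutoffs used elsewhere in the paper), leaving only $m=0$: $G(\theta)=\widehat f(\tfrac{\theta}{h})$. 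Strictly speaking the first part was stated for $f\in\mathscr{S}(\R)$; to pass to $f\in L^2$ with compactly supported $\widehat f$ one either approximates $f$ by Schwartz functions with Fourier support in a fixed slightly larger interval and passes to the limit in $L^2_{\mathrm{loc}}$ of both sides, or simply notes that for such $f$ the sequence $hf(hn)$ lies in $\ell^2$ by the first Plancherel identity in \eqref{Plancherel} combined with the support condition, so the series defining $F$ converges in $L^2([-\pi,\pi])$ and the coefficient identification goes through verbatim.

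\textbf{Main obstacle.} The only genuinely delicate point is the rigorous interchange of summation and integration (equivalently, the unfolding of $\sum_m\int_{[-\pi,\pi)}$ into $\int_\R$) and the attendant convergence issues; everything else is bookkeeping with Fourier inversion. Under the stated Schwartz hypothesis this is handled cleanly by absolute convergence, so I do not anticipate real difficulty — the subtlety is purely in phrasing the band-limited case, where one must be slightly careful about the endpoints $\xi=\pm\tfrac{\pi}{h}$ of the Fourier support, but this is harmless for the applications in the paper since there the relevant Fourier supports lie strictly inside $[-\tfrac{\pi}{h},\tfrac{\pi}{h}]$ (indeed within $[-\tfrac{\pi}{2h},\tfrac{\pi}{2h}]$ after the cutoffs \eqref{psi^h'}--\eqref{phi^h'}).
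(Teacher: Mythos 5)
Your proof is correct: this is the standard periodization argument (identify the Fourier coefficients of $\theta\mapsto\sum_m\widehat f(\tfrac{\theta+2\pi m}{h})$ by unfolding the sum into an integral over $\R$ and applying Fourier inversion), and the paper simply records Lemma~\ref{Poisson-summation} as a known fact without proof, so your argument supplies exactly the expected justification. One small caution on the band-limited case: of your two suggested routes, the claim that $\{hf(hn)\}\in\ell^2$ follows from \eqref{Plancherel} plus the support condition is essentially the content of \eqref{unitary-map}, which the paper \emph{derives from} this lemma via Lemma~\ref{L:sums to int}, so to avoid circularity you should use your density argument (or note that for $|\theta|\le\pi$ the periodized sum has only finitely many nonzero terms, so the coefficient computation needs no interchange of limits at all).
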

This lemma allows us to build connections between the band-limited continuum function and its lattice sampling on the Fourier side:
For the construction of initial data given by \eqref{E:initial data} (with $2hN= h^{0+}<\frac{\pi}{2}$ so that $\supp(\widehat{P_{\leq N} \pe_0}),\,\supp(\widehat{P_{\leq N} \qe_0})\subseteq\bigl[-\frac{\pi}{2h},\frac{\pi}{2h}\bigr]$), we 
apply Lemma~\ref{Poisson-summation} to $P_{\leq N} \pe_0$ for $|\theta|\leq\frac{\pi}{2}$ and $P_{\leq N} \qe_0$ for $|\theta-\pi|\leq\frac{\pi}{2}$ and hence find 
\begin{align} \label{E:initial data''}
    \widehat{\uh}(0,\theta) \,\id_{[-\frac{\pi}{2},\frac{3\pi}{2}]}(\theta) 
    &= \sum_{n\in\Z}\uh_n(0)\ue^{-\ui n\theta} \,\id_{[-\frac{\pi}{2},\frac{3\pi}{2}]}(\theta) \\
    &=  \sum_{n\in\Z} h \big[P_{\leq N} \psi_0\big] (hn) \,\ue^{-\ui n\theta} \,\id_{[-\frac{\pi}{2},\frac{\pi}{2}]}(\theta)
    +  \sum_{n\in\Z} h \big[P_{\leq N} \phi_0\big] (hn) \,\ue^{-\ui n(\theta-\pi)} \,\id_{[\frac{\pi}{2},\frac{3\pi}{2}]}(\theta) \no\\
    &= \widehat{P_{\leq N} \pe_0}\big(\tfrac{\theta}{h}\big) + \widehat{P_{\leq N} \qe_0}\big(\tfrac{\theta-\pi}{h}\big), \no
\end{align}
thus justifying \eqref{E:initial data'}.
Also, 
using Plancherel \eqref{Plancherel} and recalling that $N\sim h^{-1+}$, we have the $L^2\rt\l^2$ boundedness of the initial data mapping 
\begin{align} \label{ah0-h'}
\nm{\ah(0)}_{\l_n^2}^2
= h \Big(\bnm{P_{\leq N} \psi_0}_{L_x^2}^2 + \bnm{P_{\leq N}\phi_0}_{L_x^2}^2 \Big) 
\leq h \big( \|\psi_0\|_{L^2}^2 + \|\phi_0\|_{L^2}^2 \big) ,
\end{align}
and subsequently
\begin{align} \label{ah0-h}
\lim_{h\to 0}\, h^{-1}\|\ah(0)\|_{\ell_n^2}^2 
= \lim_{h\to 0} \Big(\bnm{P_{\leq N} \psi_0}_{L_x^2}^2 + \bnm{P_{\leq N}\phi_0}_{L_x^2}^2 \Big)
=  \|\psi_0\|_{L^2}^2 + \|\phi_0\|_{L^2}^2,
\end{align}
which means $\|\ah(0)\|_{\ell_n^2} \sim O(h^{\frac{1}{2}})$ as $h\rt0$.

Another closely related result (which also follows directly from basics of the Fourier transform/series) is an isometry property regarding band-limited functions.
As an immediate corollary, we also derive a mathematical embodiment of the Nyquist–Shannon sampling theorem.

\begin{lemma}\label{L:sums to int}
If $f,g \in L^2(\R)$ satisfy $\supp(\widehat f\,),\supp(\widehat g)\subseteq\bigl[-\frac\pi h,\frac \pi h\bigr]$, then
\begin{align} \label{isometry}
\int_{\R} f(x) \,\overline{g(x)} \, \ud x 
\,=\, h\sum_{n\in\Z} f(hn) \,\overline{g(hn)}. 
\end{align}
In particular, for such a function $f$, $f\mapsto h^{\frac{1}{2}} f(hn)$ is a unitary map from $L^2_x(\R)$ to $\l^2_n(\Z)$: 
\begin{align} \label{unitary-map}
\int_{\R} \big|f(x)\big|^2 \, \ud x 
\,=\, h\sum_{n\in\Z} \big|f(hn)\big|^2 ; 
\end{align}
moreover, $f(x)$ for any $x\in\R$ is uniquely determined by its values at the lattice points $h\Z$ via 
\begin{align} \label{Shannon-sampling}
f(x) = \int_{-\frac\pi h}^{\frac\pi h} \widehat f(\xi)\, e^{\ui x\xi}\, \tfrac{\ud\xi}{2\pi} 
= \sum_{n\in\Z} \frac{\sin\!\big(\pi(x-hn)/h\big)}{\pi(x-hn)/h}\, f(hn).
\end{align}
\end{lemma}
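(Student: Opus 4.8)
\textbf{Proof plan for Lemma~\ref{L:sums to int}.}
The plan is to prove the single identity \eqref{isometry}; the other two statements follow as immediate corollaries. First I would reduce \eqref{isometry} to Parseval on the real line, \eqref{Parseval}, so that the claim becomes
\begin{align*}
    \int_\R \widehat f(\xi)\,\overline{\widehat g(\xi)}\,\tfrac{\ud\xi}{2\pi}
    = h\sum_{n\in\Z} f(hn)\,\overline{g(hn)} .
\end{align*}
The right-hand side is, up to a factor $h$, the $\ell^2_n$-inner product of the sampled sequences $a_n := h^{1/2} f(hn)$ and $b_n := h^{1/2} g(hn)$, so by the discrete Parseval identity \eqref{Parseval} it equals $\int_{-\pi}^\pi \widehat a(\theta)\overline{\widehat b(\theta)}\,\tfrac{\ud\theta}{2\pi}$. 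Now I would invoke the second (band-limited) conclusion of the Poisson summation formula, Lemma~\ref{Poisson-summation}: since $\supp(\widehat f\,)\subseteq[-\tfrac\pi h,\tfrac\pi h]$, we have $\widehat a(\theta) = \sum_n h f(hn)e^{-\ui n\theta} = \widehat f(\tfrac\theta h)$ for $|\theta|\le\pi$, and likewise $\widehat b(\theta) = \widehat g(\tfrac\theta h)$. Substituting and changing variables $\xi = \theta/h$ (so $\ud\theta = h\,\ud\xi$, and $|\theta|\le\pi \Leftrightarrow |\xi|\le \pi/h$) turns $\int_{-\pi}^\pi \widehat f(\tfrac\theta h)\overline{\widehat g(\tfrac\theta h)}\,\tfrac{\ud\theta}{2\pi}$ into $h\int_{-\pi/h}^{\pi/h}\widehat f(\xi)\overline{\widehat g(\xi)}\,\tfrac{\ud\xi}{2\pi}$, which is exactly $h\int_\R \widehat f\,\overline{\widehat g}\,\tfrac{\ud\xi}{2\pi}$ by the support hypothesis. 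Comparing with the two Parseval identities gives \eqref{isometry}.

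Once \eqref{isometry} is in hand, \eqref{unitary-map} is the case $g=f$, and the statement that $f\mapsto h^{1/2}f(hn)$ is unitary from $L^2_x$ to $\ell^2_n$ follows because it is a linear isometry whose range is all of $\ell^2_n$ (surjectivity: given $a\in\ell^2_n$, set $\widehat f(\xi) := \widehat a(h\xi)\,\id_{(-\pi/h,\pi/h)}(\xi)$, which lies in $L^2$ with the required support and reproduces $a$ by the same Poisson summation computation). For \eqref{Shannon-sampling}, I would write $f(x) = \int_{-\pi/h}^{\pi/h}\widehat f(\xi)e^{\ui x\xi}\tfrac{\ud\xi}{2\pi}$ using the support of $\widehat f$, then expand $\widehat f(\xi) = \sum_n h f(hn)e^{-\ui n h\xi}$ on $[-\pi/h,\pi/h]$ (again Poisson summation, after rescaling $\theta = h\xi$), interchange sum and integral (justified by $\ell^2$-convergence and Cauchy--Schwarz against the compactly supported exponential), and evaluate the elementary integral $\int_{-\pi/h}^{\pi/h} e^{\ui(x-hn)\xi}\tfrac{\ud\xi}{2\pi} = \tfrac{1}{h}\,\tfrac{\sin(\pi(x-hn)/h)}{\pi(x-hn)/h}$, which yields the sampling series.

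There is no real obstacle here; the only point demanding a modicum of care is the rigorous justification of the termwise manipulations — the identity $\widehat a(\theta) = \widehat f(\theta/h)$ for $L^2$ (rather than Schwartz) $f$, and the interchange of summation and integration in \eqref{Shannon-sampling} — which one dispatches by first proving everything for $f\in\mathscr S(\R)$ with the stated support (where Lemma~\ref{Poisson-summation} applies verbatim and all sums converge absolutely) and then passing to the $L^2$ limit, using that the sampling map is bounded $L^2_\Omega \to \ell^2$ with $\Omega=[-\pi/h,\pi/h]$ by the isometry just established, and that such band-limited Schwartz functions are dense in $L^2_\Omega$. Everything else is bookkeeping with the two Fourier conventions \eqref{FT-c}–\eqref{FS-d} and the change of variables $\xi\leftrightarrow\theta/h$.
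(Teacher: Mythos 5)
Your proposal is correct and follows essentially the same route as the paper: continuum Parseval, the change of variables $\theta=h\xi$, and the band-limited Poisson summation of Lemma~\ref{Poisson-summation} combined with discrete Parseval, just traversed from the lattice side to the continuum side rather than the reverse. The only cosmetic difference is in \eqref{Shannon-sampling}, where the paper avoids your sum--integral interchange by applying the already-proved identity \eqref{isometry} to the test function $g$ with $\widehat g(\xi)=e^{-\ui x\xi}\id_{[-\pi,\pi]}(h\xi)$ --- the same pairing you perform, packaged so that no separate limiting argument is needed.
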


\begin{proof}
The identity \eqref{isometry} is a consequence of the Parseval identity \eqref{Parseval} of the Fourier transform/series defined in \eqref{FT-c},\eqref{FS-d} with the change of variables $\theta=h\xi$ (and noting the compact support of $\widehat{f},\widehat{g}$):
\begin{equation} \label{isometry'}
\int_{\R} f(x) \,\overline{g(x)} \, \ud x 
= \int_{-\frac\pi h}^{\frac\pi h} \widehat f(\xi) \,\overline{\widehat g(\xi)} \,\tfrac{\ud\xi}{2\pi}
= h^{-1}\! \int_{-\pi}^{\pi} \widehat f(\tfrac{\theta}{h}) \,\overline{\widehat g(\tfrac{\theta}{h})} \,\tfrac{\ud\theta}{2\pi}
= h\sum_{n\in\Z} f(hn) \,\overline{g(hn)}. 
\end{equation}
Taking $g=f$, this immediately yields \eqref{unitary-map}. 
To deduce \eqref{Shannon-sampling}, we apply \eqref{isometry'} with $\widehat g(\xi) = e^{-\ui x\xi} \id_{[-\pi,\pi]}(h\xi)$ so that 
$g(y) = \int_{-\pi/h}^{\pi/h} e^{\ui (y-x)\xi} \,\tfrac{\ud\xi}{2\pi} = \frac{\sin\left(\pi(x-y)/h\right)}{\pi(x-y)}$.
\end{proof}

We remark that discrete functions can essentially be viewed as certain band-limited continuum functions.
This heuristics allows us to pass functions back and forth between $\Z$ and $\R$.
In particular, it gives rise to the
Reconstruction formula which transfers solutions of \eqref{1-c.DNLS} on the lattice to functions on the line:

\begin{definition}
Define the reconstruction operator $\rr:\ell^2_n(\Z)\rt L^2_x(\R)$ by
\begin{align} \label{R-def=}
[\rr a] (x) := h^{-1}\! \int_{-\frac\pi2}^{\frac\pi2} e^{\ui\frac{x}{h}\theta} \widehat a(\theta)\tfrac{\ud\theta}{2\pi} = \text{P.V.}\sum_{n\in\Z} \frac{\sin\!\left(\tfrac{\pi}{2h}(x-hn)\right)}{\pi(x-hn)}\cdot a_n .
\end{align}
Then the reconstructed solutions $\psi^h,\phi^h$ given in \eqref{psi^h},\eqref{phi^h} can be represented as
\begin{align} \label{psi^h phi^h}
    \psi^h(t,x) = \rr\big[\uh_n(h^{-2}t)\big](x)
    \quad\text{ and }\quad
    \phi^h(t,x) = e^{4\ui h^{-2}t} \,\rr\big[(-1)^n\uh_n(h^{-2}t)\big](x) .
\end{align}
\end{definition}

From the definition \eqref{R-def=} we have
$\supp(\widehat{\rr a})\subseteq\bigl[-\frac{\pi}{2h},\frac{\pi}{2h}\bigr]$
with $\widehat{\rr a}(\xi) = \id_{(-\frac{\pi}2,\frac\pi 2)} \widehat{a}(h\xi)$
and $[\rr a] (hn) = h^{-1} \big(P_{\abs{\theta}<\frac{\pi}{2}} a\big)_n $
(here $P_{\abs{\theta}<\frac{\pi}{2}}$ denotes the sharp projection to frequencies $\oabs{\theta}<\frac{\pi}{2}$).
These properties in turn characterize uniquely the function $[\rr a] (x)$ 
based on the Nyquist–Shannon sampling formula \eqref{Shannon-sampling}.

Furthermore, to later obtain the Strichartz bounds, we also need to generalize \eqref{unitary-map} to the $L_x^p(\R) \to \ell_n^p(\Z)$ property of the mapping for band-limited functions and understand the relation of the norms between our discrete solutions $\ah_n$ and the reconstructed solutions $\ph,\qh$. 
We refer to \cite{Plancherel.Polya1937} for the following lemma.


\begin{lemma} \label{Lem:norm-relation}
Fix $1<p<\infty$ and let $f:\R\to \C$ be a Schwartz function with $\supp(\widehat{f}\,)\subseteq\bigl[-\frac\pi h,\frac \pi h\bigr]$.  Then
\begin{align} \label{lp-Lp}
    \int_{\R}\big|f(x)\big|^p\, \ud x \,\simeq\,
    h\sum_{n\in\Z}\big|f(hn)\big|^p .
\end{align}
\end{lemma}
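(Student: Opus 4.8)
The plan is to prove the norm equivalence \eqref{lp-Lp} in two halves, each obtained from the corresponding estimate for a single dyadic block together with a square-function / Littlewood--Paley argument, although since $f$ is already frequency-localized to $[-\pi/h,\pi/h]$ the Littlewood--Paley machinery collapses to a fixed scale. First I would record the elementary interpolation inequality that for a sequence supported in one dyadic frequency annulus the $\ell^p$ and $L^p$ behaviors are comparable; concretely, I would start from the observation that $f$ band-limited to $[-\pi/h,\pi/h]$ means $f = f \ast \check{\chi}$ for a Schwartz kernel $\check{\chi}$ with $\chi$ supported in a fixed neighborhood of that interval and equal to $1$ on it, so that $f$ is ``slowly varying at scale $h$'' in the precise quantitative sense that $|f(x)| \lesssim (K_h \ast |f|)(x)$ where $K_h(x) = h^{-1} K(x/h)$ for a fixed positive rapidly-decaying $K$ (obtained by dominating $|\check{\chi}|$). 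This reproducing-kernel bound is the workhorse.

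For the direction $h\sum_n |f(hn)|^p \lesssim \int_\R |f(x)|^p\,\ud x$, I would apply the reproducing bound at the lattice points: $|f(hn)| \lesssim (K_h \ast |f|)(hn) = h^{-1}\int K((hn-y)/h)|f(y)|\,\ud y$. Then I would estimate $\sum_n |f(hn)|^p$ by Jensen/Hölder against the probability-normalized kernel $h^{-1}K((hn-y)/h)\,\ud y$ (after normalizing $\int K = 1$), pull the $p$-th power inside, and sum in $n$; the sum $\sum_n h^{-1}K((hn-y)/h)$ is uniformly bounded (it is a Riemann-sum approximation to $\int K = 1$, hence $O(1)$ uniformly in $y$ and $h$), which lets Fubini close the estimate and yields the claimed bound with an $h$-independent constant. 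For the reverse direction $\int_\R |f(x)|^p\,\ud x \lesssim h\sum_n |f(hn)|^p$, I would use the Nyquist--Shannon formula \eqref{Shannon-sampling}: $f(x) = \sum_n \mathrm{sinc}_h(x-hn) f(hn)$ with $\mathrm{sinc}_h(y) = \sin(\pi y/h)/(\pi y/h)$. Since $\mathrm{sinc}_h$ is not absolutely summable, I cannot use the naive triangle inequality; instead I would either (i) invoke the Plancherel--Pólya theorem directly as cited, or (ii) give the standard self-contained argument: dominate $|\mathrm{sinc}_h|$ pointwise after a partition-of-unity smoothing, or replace the sharp sinc reconstruction by an oversampled smooth reconstruction kernel $\Phi_h$ (Schwartz, with $\widehat{\Phi}=1$ on $[-\pi/h,\pi/h]$) so that $f(x) = h\sum_n \Phi_h(x-hn) f(hn)\cdot(\text{const})$ still holds by Poisson summation, and now $\Phi_h$ \emph{is} rapidly decaying; then the same Jensen-against-normalized-kernel computation as above, run in the other direction, gives $\int |f|^p \lesssim h \sum_n |f(hn)|^p$.

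The main obstacle is the reverse inequality, precisely because the exact interpolation formula uses the non-integrable sinc kernel, so one must either cite Plancherel--Pólya (as the paper chooses to do) or introduce an oversampling trick to replace sinc by a Schwartz kernel while preserving an exact sampling identity --- the latter requires noting that $\supp(\widehat f) \subseteq [-\pi/h,\pi/h]$ leaves just enough room, or more safely passing to a slightly finer lattice $h' \Z$ with $h' < h$ and then comparing $\ell^p(h'\Z)$ to $\ell^p(h\Z)$, which is itself an elementary Riemann-sum comparison for band-limited sequences. Everything else --- the forward inequality, the kernel domination, the Jensen/Fubini bookkeeping, and the uniformity in $h$ (which follows by the scaling $x \mapsto x/h$ that maps the problem to the fixed lattice $\Z$ and the fixed band $[-\pi,\pi]$) --- is routine. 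I would present the scaling reduction first to make clear that no constant secretly depends on $h$, then prove the two inequalities on the normalized lattice $\Z$.
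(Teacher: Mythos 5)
The paper does not actually prove this lemma; it simply cites Plancherel--P\'olya \cite{Plancherel.Polya1937}, so your option (i) for the hard direction coincides with what the paper does. Your proof of the forward inequality $h\sum_n|f(hn)|^p\lesssim\|f\|_{L^p}^p$ (reproducing kernel $f=f*\check\chi$, domination by a normalized rapidly decaying $K_h$, Jensen, and the uniform bound $\sum_nK(n-y/h)\lesssim1$) is complete, correct, and standard, and the preliminary rescaling to $h=1$ correctly disposes of the uniformity in $h$.

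Your self-contained route for the reverse inequality, however, has a genuine gap. First, the claim that $\supp(\widehat f\,)\subseteq[-\frac\pi h,\frac\pi h]$ ``leaves just enough room'' is false: at the exact Nyquist rate the spectral translates $[\frac{(2m-1)\pi}{h},\frac{(2m+1)\pi}{h}]$ abut the original band, so no Schwartz kernel $\Phi$ with $\widehat\Phi\equiv1$ on the band can avoid aliasing on the lattice $h\Z$ itself; the sharp sinc kernel is forced, and it is not absolutely summable. Your safer fallback --- oversample on a finer lattice $h'\Z$, where a Schwartz reconstruction kernel does exist and yields $\|f\|_{L^p}^p\lesssim h'\sum_n|f(h'n)|^p$ --- is fine up to that point, but the remaining step $h'\sum_n|f(h'n)|^p\lesssim h\sum_m|f(hm)|^p$ is \emph{not} ``an elementary Riemann-sum comparison.'' The operator sending the coarse samples $(f(hm))_m$ to the intermediate samples $(f(hm+s))_m$ of a band-limited function is the Fourier multiplier $e^{\ui s\theta/h}$ on $\theta\in[-\pi,\pi)$, periodized; this symbol has a jump at $\theta=\pm\pi$, its kernel decays only like $1/|n|$, and its $\ell^p$-boundedness for $1<p<\infty$ is the M.~Riesz/discrete Hilbert transform theorem --- which is precisely where the restriction $1<p<\infty$ in the statement enters, and is morally equivalent to the inequality you are trying to prove. (Any attempt to route around it, e.g.\ via $\int_0^h\sum_n|f(hn+s)|^p\,\ud s=\|f\|_{L^p}^p$ plus a pigeonhole in $s$, runs into the same shift-invariance issue.) So to make the argument self-contained you must either import the discrete Hilbert transform bound explicitly or reproduce the Plancherel--P\'olya argument; otherwise, citing \cite{Plancherel.Polya1937} as the paper does is the honest resolution.
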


Combining this result with the $\l^p$-boundedness (for $1<p<\infty$) of sharp Fourier cutoffs, we find that
the operator $\mathcal R$ (defined in \eqref{R-def=}) is bounded from $\ell_n^p(\Z)$ to $L^p_x(\R)$ for $1<p<\infty$ with norm
\begin{align} \label{R:lp-Lp}
\|\mathcal R\|_{\ell_n^p(\Z)\to L^p_x(\R)}\lesssim h^{\frac1p -1}.
\end{align}
Consequently, under the relation \eqref{psi^h phi^h} and for $|t|\leq T$, we have for $1<p<\infty$ and $1\leq q\leq\infty$, 
\begin{align*}
\nm{\ah(h^{-2}t)}_{\l^p_n(\Z)} 
\simeq h^{1-\frac{1}{p}}
    \Big(\bnm{\ph(t)}_{L_x^p(\R)} + \bnm{\qh(t)}_{L_x^p(\R)} \Big) ,
\end{align*}
and 
\begin{align} \label{lqp-Lqp}
    \nm{\ah}_{L^q_{\tau}\l^p_n([-h^{-2}T,\,h^{-2}T]\times\Z)} 
    \simeq h^{1-\frac{1}{p}-\frac{2}{q}}
    \Big(\bnm{\ph}_{L_t^q L_x^p([-T,T]\times\R)} + \bnm{\qh}_{L_t^q L_x^p([-T,T]\times\R)} \Big) .
\end{align}
Here we set $\tau=h^{-2}t$; we will frequently use this shorthand in what follows.
Note also that the corresponding time interval for $\ah_n$ is stretched to a longer period of $h^{-2}T$,
and that the factor $h^{1-\frac{1}{p}-\frac{2}{q}}$ exactly comes from our particular space-time scaling for the continuum limit.

\medskip

\medskip

\subsection{Dispersive and Strichartz Estimates} \label{SubSec:Dispersive-Strichartz}

We start this subsection by reviewing some dispersive and Strichartz estimates for the continuum and discrete Schr\"odinger propagators.

The continuum Schr\"odinger propagator $e^{\ui t\D}$ corresponds to the dispersion relation $\omega(\xi)=-\xi^2$, which leads to the dispersive inequality 
\begin{equation*}
    \bnm{e^{\ui t\D}\psi_0}_{L^\infty_x} \ls \abs{t}^{-\frac{1}{2}} \nm{\psi_0}_{L^1_x} ,
\end{equation*}
as well as the Strichartz estimates:

\begin{lemma}[Linear Strichartz estimates for $e^{\ui t\D}$; \cite{Ginibre.Velo1992}] \label{Lem:l.c-Strichartz}
Let $(q,p)$ and $(\tilde q, \tilde p)$ be two Schr\"odinger-admissible Strichartz pairs satisfying
\begin{align*}
2\leq q,p, \tilde q, \tilde p\leq\infty \quad\text{and} \quad 
\tfrac{2}{q}+\tfrac{1}{p}=\tfrac{1}{2}=\tfrac{2}{\tilde q}+\tfrac{1}{\tilde p}.
\end{align*}
If $\psi$ solves the linear Schr\"odinger equation
\begin{align*}
\ui\dt\psi=-\D \psi+F
\quad\text{with} \quad
\psi(0)=\psi_0\in L^2_x(\R)
\end{align*}
on $I\times\R$ for some time interval $I\ni0$,
then
\begin{align*}
\nm{\psi}_{L_t^qL^p_x(I\times\R)}\ls\nm{\psi_0}_{L^2_x(\R)}+\nm{F}_{L_t^{\tilde q'}\!L_x^{\tilde p'}(I\times\R)}.
\end{align*}
\end{lemma}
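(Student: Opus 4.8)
The statement to prove is the classical Strichartz estimate for the linear Schrödinger propagator $e^{\ui t\Delta}$ on $\R$ (Lemma~\ref{Lem:l.c-Strichartz}).

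My plan follows the standard $TT^*$ argument.

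\medskip

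\textbf{Step 1: Dispersive estimate and interpolation.} Starting from the free propagator formula $[e^{\ui t\Delta}f](x) = (4\pi\ui t)^{-1/2}\int_\R e^{\ui|x-y|^2/(4t)} f(y)\,\ud y$, one reads off the fixed-time bound $\bnm{e^{\ui t\Delta}f}_{L^\infty_x}\ls |t|^{-1/2}\nm{f}_{L^1_x}$. Combining this with the trivial $L^2\to L^2$ unitarity via the Riesz--Thorin interpolation theorem gives, for every $2\leq p\leq\infty$,
\begin{align*}
    \bnm{e^{\ui t\Delta}f}_{L^p_x} \ls |t|^{-(\frac12-\frac1p)}\nm{f}_{L^{p'}_x},
\end{align*}
where $p'$ is the conjugate exponent.

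\medskip

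\textbf{Step 2: The $TT^*$ argument for the homogeneous estimate.} Set $Tf := e^{\ui t\Delta}f$ as a map $L^2_x\to L^q_tL^p_x$. Its adjoint is $T^*F = \int_\R e^{-\ui s\Delta}F(s)\,\ud s$, and $TT^*F = \int_\R e^{\ui(t-s)\Delta}F(s)\,\ud s$. It suffices to show $TT^*$ is bounded $L^{q'}_tL^{p'}_x\to L^q_tL^p_x$. Apply the Step~1 dispersive bound inside the $t$-integral to get $\bnm{[TT^*F](t)}_{L^p_x}\ls \int_\R |t-s|^{-(\frac12-\frac1p)}\nm{F(s)}_{L^{p'}_x}\,\ud s$, then invoke the Hardy--Littlewood--Sobolev inequality in the time variable: this requires $\frac12-\frac1p\in(0,1)$ and the scaling relation $\frac1{q'} = \frac1q + (\frac12-\frac1p)$, i.e. $\frac2q + \frac1p = \frac12$, exactly the admissibility condition (the endpoint $(q,p)=(\infty,2)$ being trivial and the endpoint $(q,p)=(2,\infty)$ excluded here since $p<\infty$ is forced by $2/q+1/p=1/2$ with $q\geq2$, except $q=2$ which needs the Keel--Tao argument — but here $q=2$ forces $p=\infty$, so in the stated range $q>2$ strictly whenever $p<\infty$, hence HLS suffices). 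By duality this yields $\nm{e^{\ui t\Delta}\psi_0}_{L^q_tL^p_x}\ls\nm{\psi_0}_{L^2_x}$.

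\medskip

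\textbf{Step 3: The inhomogeneous estimate.} Writing the Duhamel formula $\psi(t) = e^{\ui t\Delta}\psi_0 - \ui\int_0^t e^{\ui(t-s)\Delta}F(s)\,\ud s$, the first term is controlled by Step~2. For the second, one combines the $TT^*$-type bound with the Christ--Kiselev lemma to pass from the untruncated operator $\int_\R$ to the retarded operator $\int_{s<t}$ (legitimate since $q>\tilde q'$ in the non-endpoint range), obtaining $\bnm{\int_0^t e^{\ui(t-s)\Delta}F(s)\,\ud s}_{L^q_tL^p_x}\ls\nm{F}_{L^{\tilde q'}_tL^{\tilde p'}_x}$ for any admissible $(\tilde q,\tilde p)$. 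Summing the two contributions gives the claim.

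\medskip

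\textbf{Main obstacle.} The only genuinely delicate point is the endpoint behavior. In the range stated in the lemma ($2\leq q,p\leq\infty$ with $2/q+1/p=1/2$), the double endpoint $L^2_tL^\infty_x$ is excluded automatically by the dimension-one admissibility constraint, so one never needs the Keel--Tao machinery, and the HLS inequality together with the Christ--Kiselev lemma handle everything. Thus the proof is entirely classical; I would simply cite \cite{Ginibre.Velo1992} for the details and sketch the $TT^*$ reduction as above.
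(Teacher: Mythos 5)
The paper gives no proof of this lemma; it is quoted directly from \cite{Ginibre.Velo1992}, so there is nothing internal to compare against. Your $TT^*$ sketch (dispersive bound plus Riesz--Thorin, Hardy--Littlewood--Sobolev in time, duality, then Christ--Kiselev for the retarded operator with mismatched pairs $(q,p)\neq(\tilde q,\tilde p)$) is the standard and correct route, and your main structural claim --- that in one dimension the Keel--Tao endpoint machinery is never needed --- is right. One small correction to your endpoint discussion: the constraint $\tfrac2q+\tfrac1p=\tfrac12$ with $p\geq 2$ forces $q\geq 4$, so $q=2$ is simply not attainable (it would require $1/p=-1/2$), rather than ``forcing $p=\infty$''; the extreme admissible pair is $(q,p)=(4,\infty)$, which is handled by HLS since the time kernel exponent is $\tfrac12-\tfrac1p=\tfrac12\in(0,1)$, and Christ--Kiselev applies throughout because $\tilde q'\leq 4/3<4\leq q$. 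With that parenthetical cleaned up, the argument is complete.
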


In comparison,
the discrete Schr\"odinger propagator $e^{\ui t\DD}$ with the discrete Laplacian $(\DD\ah)_n:= \ah_{n+1} - 2\ah_n + \ah_{n-1}$ corresponds to 
\begin{align*}
    \omega_{{\rm d}}(\theta) = e^{\ui\theta}-2+e^{-\ui\theta}=2\cos(\theta)-2 = -4\sin^2(\tfrac{\theta}{2}) .
\end{align*}
Unlike in the continuum case, the discrete dispersion relation $\omega_{{\rm d}}(\theta)$ has inflection points at $\theta=\pm\frac{\pi}{2}$ (mod $2\pi$) where the dispersion is very weak, and this results in a slower dispersive decay (similar to the situation for the Airy propagator)
\begin{align*}
    \bnm{e^{\ui t\DD}\ah_0}_{\ell^\infty_n} \ls \abs{t}^{-\frac{1}{3}} \nm{\ah_0}_{\ell^1_n}.
\end{align*}
Then as shown in \cite[Theorem~3]{Stefanov.Kevrekidis2005}, the corresponding Strichartz estimates can be derived via the same techniques used in \cite{Keel.Tao1998}.

\begin{lemma}[Linear Strichartz estimates for $e^{\ui t\DD}$; \cite{Stefanov.Kevrekidis2005}] \label{Lem:l.d-Strichartz}
Let $(q,p)$ and $(\tilde q, \tilde p)$ be two pairs satisfying
\begin{align*}
2\leq q,p, \tilde q, \tilde p\leq\infty , \qquad 
\tfrac{1}{q}+\tfrac{1}{3p}\leq \tfrac{1}{6} \quad\text{and} \quad \tfrac{1}{\tilde q}+\tfrac{1}{3\tilde p}\leq \tfrac{1}{6}.
\end{align*}
If $\ah$ solves the linear discrete Schr\"odinger equation
\begin{align*}
\ui\dt\ah_n=-(\DD\ah)_n+F_n
\quad\text{with} \quad
\ah(0)=\ah_0\in \l^2_n(\Z)
\end{align*}
on $I\times\Z$ for some time interval $I\ni0$,
then
\begin{align*}
\nm{\ah}_{L_t^q\l^p_n(I\times\Z)}\ls\nm{\ah_{0}}_{\l^2_n(\Z)}+\nm{F}_{L_t^{\tilde q'}\!\l_n^{\tilde p'}(I\times\Z)}.
\end{align*}
\end{lemma}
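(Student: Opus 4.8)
# Proof Proposal for Lemma~\ref{Lem:l.d-Strichartz}

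The plan is to follow the abstract framework of Keel--Tao, deriving the Strichartz estimates from the dispersive decay bound $\|e^{\ui t\DD} a_0\|_{\ell^\infty_n} \lesssim |t|^{-1/3}\|a_0\|_{\ell^1_n}$ together with the trivial $\ell^2\to\ell^2$ unitarity of the propagator. First I would recall that $U(t):=e^{\ui t\DD}$ is a strongly continuous unitary group on $\ell^2_n(\Z)$, so $\|U(t)a_0\|_{\ell^2_n}=\|a_0\|_{\ell^2_n}$ for all $t$; this provides the ``untruncated'' endpoint. The dispersive decay, which follows from stationary phase applied to the oscillatory integral with phase $n\theta + t\omega_{\rm d}(\theta)$ — the degeneracy of $\omega_{\rm d}$ at the inflection points $\theta=\pm\frac\pi2$ being responsible for the $|t|^{-1/3}$ rate rather than $|t|^{-1/2}$ — gives the $\ell^1_n\to\ell^\infty_n$ bound with decay exponent $\sigma=\frac13$.

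Next I would invoke the Keel--Tao machinery verbatim: for a one-parameter family $U(t)$ of operators satisfying $\|U(t)f\|_{L^2}\lesssim\|f\|_{L^2}$ and the ``untruncated decay'' $\|U(t)U(s)^*g\|_{L^\infty}\lesssim |t-s|^{-\sigma}\|g\|_{L^1}$ for some $\sigma>0$, one obtains $\|U(t)f\|_{L^q_t L^p_x}\lesssim\|f\|_{L^2}$ for all \emph{sharp $\sigma$-admissible} exponent pairs, namely those with $2\leq q,p\leq\infty$, $\frac1q+\frac{\sigma}{p}\leq\frac\sigma2$, and $(q,p,\sigma)\neq(2,\infty,1)$. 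Here the underlying measure space is $\Z$ with counting measure, and the ``spatial'' Lebesgue spaces become $\ell^p_n$; this substitution is harmless since the Keel--Tao argument uses only the measure-space structure and interpolation, both of which are insensitive to discreteness. With $\sigma=\frac13$ the admissibility condition $\frac1q+\frac{1}{3p}\leq\frac16$ is exactly the hypothesis imposed on $(q,p)$ and $(\tilde q,\tilde p)$, and since $\sigma=\frac13\neq1$ there is no forbidden endpoint to exclude. Combining the homogeneous estimate with its dual (the $TT^*$ argument) and the Christ--Kiselev lemma (or a direct bilinear estimate, as in Keel--Tao) to handle the retarded Duhamel operator, one obtains the inhomogeneous estimate $\|a\|_{L^q_t\ell^p_n}\lesssim\|a_0\|_{\ell^2_n}+\|F\|_{L^{\tilde q'}_t\ell^{\tilde p'}_n}$ by writing $a(t)=U(t)a_0 - \ui\int_0^t U(t-s)F(s)\,\ud s$ from Duhamel's formula. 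Restriction to a subinterval $I\ni 0$ is immediate by extending $F$ by zero outside $I$.

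The main obstacle — and the only genuinely analytic input — is the dispersive decay estimate itself, i.e.\ the oscillatory integral bound $\sup_{n}\bigl|\int_{-\pi}^{\pi} e^{\ui(n\theta+t\omega_{\rm d}(\theta))}\,\tfrac{\ud\theta}{2\pi}\bigr|\lesssim |t|^{-1/3}$. This is handled by splitting the $\theta$-integral into a neighborhood of each nondegenerate critical point (where ordinary stationary phase gives $|t|^{-1/2}$, which is stronger) and neighborhoods of the inflection points $\theta=\pm\frac\pi2$ where $\omega_{\rm d}''$ vanishes but $\omega_{\rm d}'''\neq 0$, so that van der Corput's lemma with a third-derivative lower bound yields $|t|^{-1/3}$; the worst term dictates the global rate. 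Everything else is soft: unitarity on $\ell^2$ is elementary, and the passage from the two estimates to the full Strichartz family is a black-box application of \cite{Keel.Tao1998} exactly as carried out in \cite[Theorem~3]{Stefanov.Kevrekidis2005}, which is why I would simply cite those references rather than reproduce the interpolation argument.
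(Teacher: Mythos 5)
Your proposal is correct and follows exactly the route the paper itself takes, namely deferring to \cite[Theorem~3]{Stefanov.Kevrekidis2005} and the Keel--Tao machinery applied with the $|t|^{-1/3}$ dispersive decay (van der Corput at the inflection points $\theta=\pm\frac{\pi}{2}$) and $\ell^2$-unitarity. The only micro-point worth noting is that Keel--Tao as stated gives the \emph{sharp} $\sigma$-admissible pairs, and the non-sharp pairs in the lemma then follow from the nesting $\ell^{p_0}\hookrightarrow\ell^{p}$ for $p\geq p_0$ (Lemma~\ref{Lem:l^p-embedding}), which is automatic on $\Z$.
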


However, if we project to frequencies away from the inflection points $\pm\frac{\pi}{2}$
using the projection operator $\Ps=\Ps_{\!<\lambda}$ (for some $0<\lambda<1$) defined in \eqref{Ps-def}, we may recover the same dispersive decay as in the continuum case: 
\begin{align*}
    \bnm{e^{\ui t\DD}\Ps}_{\ell^1\rt\ell^\infty} \sim \bnm{e^{\ui t\D}}_{L^1\rt L^\infty}
    \ls \abs{t}^{-\frac{1}{2}}.
\end{align*}
Consequently, we have the frequency-restricted Strichartz estimates for the discrete propagator with the same admissible pairs as for the continuum Schr\"odinger propagator.

\begin{lemma}[Linear Strichartz estimates for $e^{\ui t\DD}$ with frequency truncation]\label{P:loc Strichartz}
Let $(q,p)$ and $(\tilde q, \tilde p)$ be two pairs satisfying
\begin{align*}
2\leq q,p, \tilde q, \tilde p\leq\infty \quad\text{and} \quad 
\tfrac{2}{q}+\tfrac{1}{p}=\tfrac{1}{2}=\tfrac{2}{\tilde q}+\tfrac{1}{\tilde p}.
\end{align*}
The solution to the linear discrete Schr\"odinger equation
\begin{align*}
\ui\dt\ah_n=-(\DD\ah)_n+F_n
\quad\text{with} \quad
\ah(0)=\ah_0\in \l^2_n(\Z)
\end{align*}
on $I\times\Z$ for some time interval $I\ni0$
satisfies
\begin{align*}
\nm{\Ps\ah}_{L_t^q\l^p_n(I\times\Z)}\ls\nm{\Ps\ah_{0}}_{\l^2_n(\Z)}+\nm{\Ps F}_{L_t^{\tilde q'}\!\l_n^{\tilde p'}(I\times\Z)},
\end{align*}
where $\Ps=\Ps_{\!<\frac34}$ is the projection operator defined in \eqref{Ps-def}.
\end{lemma}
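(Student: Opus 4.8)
The plan is to reduce the frequency-truncated discrete Strichartz estimate to the abstract $TT^*$ machinery of Keel--Tao, exactly as in the continuum case, using the improved dispersive decay available once we cut away the inflection points $\theta=\pm\frac{\pi}{2}$. First I would record the dispersive estimate $\bnm{e^{\ui t\DD}\Ps_{\!<\frac34}}_{\ell^1_n\rt\ell^\infty_n}\ls\abs{t}^{-\frac12}$, which is asserted in the text just above the lemma; the point is that on the support of the symbol $\id_{\mathsf G_{3/4}}$ (i.e. where $\abs{\sin\theta}<\frac34$, hence $\theta$ bounded away from $\pm\frac{\pi}{2}$ mod $2\pi$ by Definition~\ref{Def:Ps}), the phase $\omega_{\rm d}(\theta)=-4\sin^2(\tfrac\theta2)$ has $\omega_{\rm d}''(\theta)=-2\cos\theta$ bounded below in absolute value (indeed $\abs{\cos\theta}=\sqrt{1-\sin^2\theta}\geq\sqrt{7}/4$ there), so the standard van der Corput / stationary-phase bound for oscillatory integrals with non-degenerate second derivative gives the $\abs{t}^{-1/2}$ decay of the convolution kernel $\int e^{\ui n\theta+\ui t\omega_{\rm d}(\theta)}\id_{\mathsf G_{3/4}}(\theta)\,\tfrac{\ud\theta}{2\pi}$, uniformly in $n$; away from $t=0$ we also have the trivial $\ell^1\to\ell^\infty$ bound $\ls 1$, so in fact $\ls\langle t\rangle^{-1/2}$, which is more than enough.

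Next I would set $U(t):=\Ps_{\!<\frac34}\,e^{\ui t\DD}$ acting on $\ell^2_n(\Z)$. Since $\Ps_{\!<\frac34}$ is a sharp Fourier multiplier it is self-adjoint and commutes with $e^{\ui t\DD}$, so $U(t)U(s)^*=\Ps_{\!<\frac34}^2\,e^{\ui(t-s)\DD}=\Ps_{\!<\frac34}\,e^{\ui(t-s)\DD}$ (using $\Ps_{\!<\frac34}^2=\Ps_{\!<\frac34}$), and hence $\bnm{U(t)U(s)^*}_{\ell^1\to\ell^\infty}\ls\abs{t-s}^{-1/2}$ from the previous paragraph; also $\bnm{U(t)}_{\ell^2\to\ell^2}\leq1$ by Plancherel \eqref{Plancherel}. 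Thus the pair $(U(t))_{t\in\R}$ satisfies the hypotheses of the Keel--Tao theorem with decay exponent $\sigma=\frac12$, which is precisely the continuum Schr\"odinger value; the abstract theorem then yields the homogeneous estimate $\nm{U(t)a_0}_{L^q_t\ell^p_n}\ls\nm{a_0}_{\ell^2_n}$, the dual estimate, and the inhomogeneous (retarded) estimate $\bnm{\int_{s<t}U(t)U(s)^*F(s)\,\ud s}_{L^q_t\ell^p_n}\ls\nm{F}_{L^{\tilde q'}_t\ell^{\tilde p'}_n}$ for all admissible $(q,p),(\tilde q,\tilde p)$ with $\tfrac2q+\tfrac1p=\tfrac12=\tfrac2{\tilde q}+\tfrac1{\tilde p}$ — including the double endpoint $(q,p)=(2,\infty)$, which is legitimate here since $\sigma=\frac12>1$ is false but $\sigma=\frac12$ and the abstract endpoint theorem applies for $\sigma>1$... more carefully, since $\sigma=\frac12$ the relevant admissibility is $\tfrac1q\leq\sigma(\tfrac12-\tfrac1p)$, i.e. $\tfrac2q+\tfrac1p\leq\tfrac12$, and the endpoint $(2,\infty)$ is \emph{excluded} in dimension corresponding to $\sigma=\frac12$ exactly as for $1$-d continuum NLS; I will therefore state the lemma, as written, only for the non-endpoint admissible pairs, matching Lemma~\ref{Lem:l.c-Strichartz}. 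Finally, to pass from the estimate for the free evolution to the stated estimate for the solution of the inhomogeneous equation $\ui\dt a_n=-(\DD a)_n+F_n$, I apply $\Ps_{\!<\frac34}$ to the Duhamel formula $a(t)=e^{\ui t\DD}a_0-\ui\int_0^t e^{\ui(t-s)\DD}F(s)\,\ud s$, use that $\Ps_{\!<\frac34}$ commutes with $e^{\ui s\DD}$, and invoke the Christ--Kiselev lemma to replace the full-line Duhamel operator by the retarded one (valid since $q>\tilde q'$ strictly for non-endpoint pairs). This gives $\nm{\Ps_{\!<\frac34}a}_{L^q_t\ell^p_n(I\times\Z)}\ls\nm{\Ps_{\!<\frac34}a_0}_{\ell^2_n}+\nm{\Ps_{\!<\frac34}F}_{L^{\tilde q'}_t\ell^{\tilde p'}_n(I\times\Z)}$, as claimed.

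The only genuinely non-routine point is the dispersive bound itself, i.e. verifying that truncating to $\mathsf G_{3/4}$ really does restore the $\abs{t}^{-1/2}$ decay uniformly in the spatial parameter $n$. The subtlety is that the cutoff $\id_{\mathsf G_{3/4}}$ is \emph{sharp}, not smooth, so one cannot integrate by parts freely; the clean way around this is to insert a smooth partition of unity subordinate to $\mathsf G_{3/4}$, apply van der Corput (which needs only that $\omega_{\rm d}''$ is bounded away from zero on the support, together with a single derivative bound on the amplitude) on each smooth piece, and handle the leftover sharp-edge contributions near $\abs{\sin\theta}=\frac34$ — where $\omega_{\rm d}''$ is still non-degenerate — by the same van der Corput bound with an $O(1)$ amplitude. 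Alternatively one can cite the known result (this estimate is essentially in \cite{Stefanov.Kevrekidis2005} and also appears in the continuum-limit literature) that the frequency-localized discrete Schr\"odinger kernel away from $\pm\frac\pi2$ obeys continuum-type decay. Everything downstream — the $TT^*$ reduction, Keel--Tao, Christ--Kiselev — is then entirely standard and identical to the proof of Lemma~\ref{Lem:l.c-Strichartz}.
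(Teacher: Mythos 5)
Your proposal is correct and follows exactly the route the paper intends (the paper states this lemma without proof, as a consequence of the frequency-truncated dispersive bound $\bnm{e^{\ui t\DD}\Ps}_{\ell^1\rt\ell^\infty}\ls\abs{t}^{-1/2}$ plus the Keel--Tao $TT^*$ machinery, citing \cite{Stefanov.Kevrekidis2005} for the analogous untruncated argument); your van der Corput verification of the kernel decay, using $\abs{\omega_{\rm d}''(\theta)}=2\abs{\cos\theta}\geq\sqrt{7}/2$ on $\mathsf G_{3/4}$, is the right substance, and the sharp cutoff is harmless since the bounded-variation form of van der Corput applies directly to an indicator of finitely many arcs. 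Your aside about the endpoint is garbled but immaterial: with $\sigma=\tfrac12$ the admissibility condition $\tfrac2q+\tfrac1p=\tfrac12$ forces $q\geq4$, so every pair in the lemma is non-endpoint and Keel--Tao plus Christ--Kiselev (with $\tilde q'\leq\tfrac43<q$) apply without further comment.
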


In the remainder of this section, we develop a key bilinear refinement of frequency-localized Strichartz estimates (which we will use in Section~\ref{Sec:Almost_CL}). 
This estimate expresses that there is little interaction between relatively high and low frequencies of the DNLS solutions, thus helping to control the movement of `energy' from high modes to low or vice versa.
We refer to the bilinear Strichartz estimates obtained in \cite{Koch.Tataru.Visan2014,Visan2007} (which build on earlier versions in \cite{Colliander.Keel.Staffilani.Takaoka.Tao2008,Bourgain1998,Bourgain1999,Ozawa.Tsutsumi1998})
for the continuum case.
We are not aware of any such results for the discrete Schr\"odinger propagator and so provide a proof in our specific context.

\begin{lemma}[(Frequency-localized) Bilinear Strichartz estimates for $e^{\ui t\DD}$] \label{Lem:bilinear-Strichartz}
Let $a,b\in\l^2_n(\Z)$ and let $a_K,b_L$ denote the Littlewood-Paley pieces supported on frequencies $\theta\in\R/2\pi\Z$ such that $\abs{\theta-n\pi}\simeq\abs{\sin(\theta)}\sim K,L$ ($0<K\leq L\leq1$, $n\in\Z$), respectively.
Then for $K<L/2$ (i.e. $K\ll L$), we have
\begin{align} \label{bilinear-Strichartz-est-homo}
\nm{\big(e^{\ui t\DD}a_K\big)\big(e^{\ui t\DD}b_L\big)}_{L_t^2\l^2_n(\R\times\Z)} \ls
L^{-\frac{1}{2}} \nm{a_K}_{\l^2_n(\Z)}\nm{b_L}_{\l^2_n(\Z)} .
\end{align}

Furthermore, given any functions $u,v$ defined on a space-time slab $I\times\Z$
and their (spatial) frequency-localized pieces with the similar notation $u_K,v_L$ for $K,L$ as above, we have
\begin{align} \label{bilinear-Strichartz-est}
\bnm{u_K v_L}_{L_t^2\l^2_n(I\times\Z)} \ls L^{-\frac{1}{2}}
\nm{u_K}_{S_0(I\times\Z)} \nm{v_L}_{S_0(I\times\Z)} ,
\end{align}
where the $S_0$-norm is given by 
\begin{align*}
    \nm{u}_{S_0(I\times\Z)} := \nm{u}_{L_t^\infty\l^2_n(I\times\Z)} + \nm{\big(\ui\dt+\DD\big)u}_{L_t^1\l^2_n(I\times\Z)}.
\end{align*}
\end{lemma}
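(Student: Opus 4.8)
The plan is to prove the homogeneous estimate \eqref{bilinear-Strichartz-est-homo} first, by a standard Fourier-side argument exploiting the separation of frequency scales, and then upgrade to \eqref{bilinear-Strichartz-est} by a Duhamel/Christ--Kiselev argument. For the homogeneous bound, I would write both factors via the Fourier series representation and compute the spacetime $L^2_t\l^2_n$ norm using Plancherel in both variables. Concretely, $e^{\ui t\DD}a_K$ has spatial Fourier transform $e^{\ui t\omega_{\rm d}(\theta)}\widehat{a_K}(\theta)$, so the product has Fourier transform (in $n$) given by the convolution $\int e^{\ui t(\omega_{\rm d}(\theta-\eta)+\omega_{\rm d}(\eta))}\widehat{a_K}(\theta-\eta)\widehat{b_L}(\eta)\,\tfrac{\ud\eta}{2\pi}$. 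Taking $L^2_t\l^2_n$ and using Plancherel in $t$ (and in $\theta$) reduces the matter to controlling, for fixed output frequency $\theta$, the Jacobian of the map $\eta\mapsto \Phi(\theta,\eta):=\omega_{\rm d}(\theta-\eta)+\omega_{\rm d}(\eta)$, i.e.\ $\partial_\eta\Phi = -\omega_{\rm d}'(\theta-\eta)+\omega_{\rm d}'(\eta)$. Since $\omega_{\rm d}'(\theta)=-2\sin(\theta)$, we get $\partial_\eta\Phi = 2\sin(\theta-\eta)-2\sin(\eta) = 4\cos(\tfrac{\theta}{2})\sin(\tfrac{\theta-2\eta}{2})$; on the relevant frequency supports one factor has argument near $0$ (the $a_K$ piece, so $\theta-\eta = O(K)$) while $\eta = O(L)$ with $L\gg K$, forcing $|\partial_\eta\Phi|\gtrsim L$. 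This lower bound on the phase derivative is exactly what lets us change variables $\mu = \Phi(\theta,\eta)$ and gain the factor $L^{-1/2}$ after Cauchy--Schwarz, yielding \eqref{bilinear-Strichartz-est-homo}. (I would also separately dispense with the degenerate region where $\cos(\tfrac\theta2)$ is small, i.e.\ $\theta$ near $\pm\pi$; but there both frequency blocks live near $0$ or near $\pi$, and near $\pi$ the roles of $\sin$ and $\cos$ swap, so the same resonance-function computation applies with $\theta\mapsto\theta-\pi$ — this is where the hypothesis $|\theta-n\pi|\simeq|\sin\theta|$ on the localization is used.)

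For the inhomogeneous estimate \eqref{bilinear-Strichartz-est}, I would write $u_K$ and $v_L$ via Duhamel: $u_K(t) = e^{\ui t\DD}u_K(0) - \ui\int_0^t e^{\ui(t-s)\DD}G_K(s)\,\ud s$ with $G := (\ui\dt+\DD)u$, and similarly for $v_L$. Expanding the product $u_Kv_L$ bilinearly gives four terms: homogeneous$\times$homogeneous, homogeneous$\times$Duhamel, Duhamel$\times$homogeneous, and Duhamel$\times$Duhamel. The first is handled directly by \eqref{bilinear-Strichartz-est-homo} together with $\nm{u_K(0)}_{\l^2_n}\le\nm{u_K}_{L^\infty_t\l^2_n}$. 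For the mixed and double-Duhamel terms, I would use Minkowski's inequality to pull the time integral(s) outside the $L^2_t\l^2_n$ norm and then apply \eqref{bilinear-Strichartz-est-homo} to each frozen-time slice of the Duhamel integrand, picking up $\nm{G_K}_{L^1_t\l^2_n}$ and/or $\nm{G_L}_{L^1_t\l^2_n}$ — this is cleaner than invoking Christ--Kiselev since the bilinear estimate is on the full line $I$ rather than requiring a retarded structure. Assembling the four pieces and using $\nm{G_K}_{L^1_t\l^2_n}\le\nm{G}_{L^1_t\l^2_n}$ (boundedness of the sharp Littlewood--Paley cutoff on $\l^2_n$, valid since $p=2$) gives the claimed bound with the $S_0$ norms. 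One routine point to address is that on a bounded interval $I$ one can extend to $\R$ by the usual argument, or simply note that the homogeneous estimate with initial time $0\in I$ already integrates over all of $\R$ in $t$ so restriction to $I$ only improves the constant.

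The main obstacle is the homogeneous estimate \eqref{bilinear-Strichartz-est-homo}, specifically establishing the lower bound $|\partial_\eta\Phi(\theta,\eta)|\gtrsim L$ uniformly over the frequency supports and carrying out the change of variables cleanly when $\theta$ ranges over a full period of $\T$. The subtlety is twofold: first, near the inflection points $\theta=\pm\tfrac\pi2$ of $\omega_{\rm d}$ the dispersion degenerates, but the factor $\cos(\tfrac\theta2)$ in $\partial_\eta\Phi$ does \emph{not} vanish there (it vanishes only at $\theta=\pm\pi$), so the inflection points are actually harmless for the bilinear interaction — this needs to be stated carefully; second, one must check that $\eta\mapsto\Phi(\theta,\eta)$ is genuinely injective on the support of $\widehat{b_L}$ (not merely that its derivative is large), which follows because on that support $\tfrac{\theta-2\eta}{2}$ stays in a region where $\sin$ is monotone, given $K\ll L$. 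I expect the non-resonance computation and these support considerations to be the technical heart; once $|\partial_\eta\Phi|\gtrsim L$ is in hand, the rest is Plancherel plus Cauchy--Schwarz and is entirely routine.
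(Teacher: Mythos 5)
Your proposal is correct and follows essentially the same route as the paper: Plancherel in space and time, a change of variables in frequency governed by the resonance function, the Jacobian/phase-derivative lower bound $2|\sin\theta_1-\sin\theta_2|\gtrsim L$ coming from the separation $K<L/2$, Cauchy--Schwarz to extract the $L^{-1/2}$, and then Duhamel expansion plus Minkowski to reduce the inhomogeneous estimate to the homogeneous one. The only cosmetic difference is that the paper runs the homogeneous argument by duality against a test function $f\in L^2_t\l^2_n$ rather than by directly computing the spacetime Fourier transform of the product, and it reads the key lower bound directly off $|\sin\theta_1-\sin\theta_2|$ without passing through the factored form $4\cos(\tfrac\theta2)\sin(\tfrac{\theta-2\eta}{2})$, so the degenerate-region discussion you flag is not needed.
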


\begin{remark}
If restricting frequencies away from the inflection points of $\omega_{{\rm d}}(\theta)$ with the projection $\Ps$, we may combine Lemma~\ref{P:loc Strichartz} to generalize the estimate \eqref{bilinear-Strichartz-est} above to the full set of Strichartz pairs $(q,p)$ and $(\tilde q, \tilde p)$ satisfying $2< q, \tilde q,\leq\infty$, $2\leq p, \tilde p\leq\infty$ and $\tfrac{2}{q}+\tfrac{1}{p}=\tfrac{1}{2}=\tfrac{2}{\tilde q}+\tfrac{1}{\tilde p}$;
here 
\eqref{bilinear-Strichartz-est} corresponds to the case $(q,p)=(\tilde q, \tilde p)=(\infty,2)$.

We also make the trivial remark that 
the estimate \eqref{bilinear-Strichartz-est} still holds if we replace the product of the form $uv$ on the left with either $u\overline{v}$, $\overline{u}v$, or $\overline{u}\overline{v}$,
since the $L_t^2\l^2_n$-norm of these products are the same.
\end{remark}

\begin{proof}
We begin by addressing the homogeneous case \eqref{bilinear-Strichartz-est-homo} (which can be viewed as a particular case of \eqref{bilinear-Strichartz-est} with $u(t)=e^{\ui t\DD}a$ and $v(t)=e^{\ui t\DD}b$).
By duality, it is equivalent to show that  
for any $f\in L_t^2\l^2_n(\R\times\Z)$,
\begin{align} \label{pangfufu}
    \Babs{\Big\langle f, \big(e^{\ui t\DD}a_K\big)\big(e^{\ui t\DD}b_L\big)\Big\rangle_{\!L_t^2\l^2_n} }
    \ls L^{-\frac{1}{2}} \nm{a_K}_{\l^2_n(\Z)}\nm{b_L}_{\l^2_n(\Z)} 
    \nm{f}_{L_t^2\l^2_n(\R\times\Z)} .
\end{align}
Using the Parseval identity \eqref{Parseval-paraproduct} and Fubini, we deduce 
\begin{align*}
    \text{\eqref{pangfufu}-LHS} &= 
    \bigg| \int_{\R}\iint_{\T\times\T} \overline{\widehat{f}(t,\theta_1+\theta_2)}\, \Big[\ue^{-4\ui t\sin^2(\frac{\theta_1}{2})} \widehat{a_K}(\theta_1)\Big] \Big[\ue^{-4\ui t\sin^2(\frac{\theta_2}{2})}\widehat{b_L}(\theta_2)\Big] \,\tfrac{\ud\theta_1}{2\pi}\tfrac{\ud\theta_2}{2\pi} \,\ud t \bigg| \\
    &= \bigg| \iint_{\T\times\T} \!\Big\{\int_{\R} \overline{\widehat{f}(t,\theta_1+\theta_2)} \,\ue^{-\ui t \big[4\sin^2(\frac{\theta_1}{2}) + 4\sin^2(\frac{\theta_2}{2})\big]}\ud t\Big\}\, \widehat{a_K}(\theta_1) \widehat{b_L}(\theta_2) \,\tfrac{\ud\theta_1}{2\pi}\tfrac{\ud\theta_2}{2\pi} \bigg| ,
\end{align*}
where the Fourier transform is taken with respect to the spatial variable, and $\T$ denotes the periodic domain $\R/2\pi\Z$.
We then change variables by writing
\begin{align} \label{fu-meng}
    \theta = \theta_1+\theta_2
    \quad\text{ and }\quad 
    \rho 
    = 4\sin^2(\tfrac{\theta_1}{2})+4\sin^2(\tfrac{\theta_2}{2})
    = 4-2\cos(\theta_1)-2\cos(\theta_2),
\end{align}
and let 
\begin{align} \label{feifei}
    g(\rho, \theta ) := \int_{\R} \overline{\widehat{f}(t,\theta)} \,\ue^{-\ui t \rho}\,\ud t
    = \big[\mathscr{F}[\overline{\mathscr{F}_{\!\rm d}f}]\big](\rho,\theta) ,
\end{align}
which reveals an isometry between $L_t^2\l^2_n(\R\times\Z)$ and $L^2_{\rho,\theta}(\R\times\T)$ (via the Fourier transforms in both spatial and temporal variables) with 
$\nm{f}_{L_t^2\l^2_n(\R\times\Z)} \simeq \nm{g}_{L^2_{\rho,\theta}(\R\times\T)}$ by Plancherel \eqref{Plancherel}.
Thus, combining \eqref{pangfufu} through \eqref{feifei} we see that 
the estimate \eqref{bilinear-Strichartz-est-homo} can be recast as  
\begin{align} \label{feimengmeng}
    \bigg| \iint_{\T\times\T} g(\rho, \theta )\, \widehat{a_K}(\theta_1) \widehat{b_L}(\theta_2) \,\ud\theta_1\ud\theta_2 \bigg|
    \ls L^{-\frac{1}{2}} \nm{a_K}_{\l^2_n(\Z)}\nm{b_L}_{\l^2_n(\Z)} 
    \nm{g}_{L^2_{\rho,\theta}(\R\times\T)} 
\end{align}
for any $g\in L^2_{\rho,\theta}(\R\times\T)$.

To show \eqref{feimengmeng}, we compute the absolute value of the Jacobian of the change of variables $(\theta_1,\theta_2)\mapsto(\rho,\theta)$ given by \eqref{fu-meng}: 
\begin{align*}
    \abs{J} = \abs{\frac{\partial(\,\rho\,,\,\theta\,)}{\partial(\theta_1,\theta_2)}}
    = \abs{\det\! \begin{bmatrix}
2\sin(\theta_1) \!&\! 2\sin(\theta_2) \\
1 \!&\! 1
\end{bmatrix}} 
    = 2\babs{\sin(\theta_1)-\sin(\theta_2)} .
\end{align*}
Using this information to find $\abs{J}\sim L$ for $L\gg K$ (on the support of the integrand of the third line below), together with the Cauchy–Schwarz inequality, 
we get
\begin{align*}
    \text{\eqref{feimengmeng}-LHS} &= \abs{\iint g(\rho, \theta )\, \widehat{a_K}(\theta_1) \widehat{b_L}(\theta_2) \abs{J}^{-1} \ud\rho\ud\theta } \\
    &\leq \nm{g}_{L^2_{\rho,\theta}} \left(\iint \babs{\widehat{a_K}(\theta_1)}^2 \babs{\widehat{b_L}(\theta_2)}^2 \abs{J}^{-2} \ud\rho\ud\theta \right)^{\frac{1}{2}} \\
    &= \nm{g}_{L^2_{\rho,\theta}} \left(\iint \babs{\widehat{a_K}(\theta_1)}^2 \babs{\widehat{b_L}(\theta_2)}^2 \abs{J}^{-1} \ud\theta_1\ud\theta_2 \right)^{\frac{1}{2}} \\
    &\ls \nm{g}_{L^2_{\rho,\theta}} L^{-\frac{1}{2}} \|\widehat{a_K}\|_{L^2_\theta} \|\widehat{b_L}\|_{L^2_\theta} ,
\end{align*}
which implies \eqref{feimengmeng} upon reversing Plancherel \eqref{Plancherel}, and hence \eqref{bilinear-Strichartz-est-homo} follows.

We now turn our attention to the inhomogeneous estimate \eqref{bilinear-Strichartz-est}.
For simplicity we set $F:=\big(\ui\dt+\DD\big)u$ and $G:=\big(\ui\dt+\DD\big)v$, 
and then use Duhamel’s formula to write (for any times $t_0,t\in I$)
\begin{align*}
    u(t) &= e^{\ui (t-t_0)\DD}u(t_0) - \ui\int_{t_0}^{t}e^{\ui (t-s)\DD} F(s)\,\ud s ,\\
    v(t) &= e^{\ui (t-t_0)\DD}v(t_0) - \ui\int_{t_0}^{t}e^{\ui (t-s)\DD} G(s)\,\ud s .
\end{align*}
Therefore, after distributing the terms and applying Minkowski’s integral inequality (to the second, the third, and the fourth terms below), we then insert the homogeneous estimate \eqref{bilinear-Strichartz-est-homo} to obtain 
(here we may drop some of the domains of norm when there is no confusion)
\begin{align*}
    \bnm{u_K v_L}_{L_t^2\l^2_n(I\times\Z)} \leq&\,
     \nm{e^{\ui (t-t_0)\DD}u_K(t_0)\, e^{\ui (t-t_0)\DD}v_L(t_0)}_{L_t^2\l^2_n(I\times\Z)} \\
     &\,+ \nm{e^{\ui (t-t_0)\DD}u_K(t_0) \int_{t_0}^{t}e^{\ui (t-s)\DD} G_L(s)\,\ud s}_{L_t^2\l^2_n}
     \!+ \nm{e^{\ui (t-t_0)\DD}v_L(t_0) \int_{t_0}^{t}e^{\ui (t-s)\DD} F_K(s)\,\ud s}_{L_t^2\l^2_n} \\
     &\,+ \nm{\int_{t_0}^{t}e^{\ui (t-s)\DD} F_K(s)\,\ud s \int_{t_0}^{t}e^{\ui (t-s')\DD} G_L(s')\,\ud s'}_{L_t^2\l^2_n} \\
     \leq&\,
     \nm{e^{\ui (t-t_0)\DD}u_K(t_0)\, e^{\ui (t-t_0)\DD}v_L(t_0)}_{L_t^2\l^2_n(\R\times\Z)} \\
     &\,+ \int_{I} \nm{e^{\ui (t-t_0)\DD}u_K(t_0)\, e^{\ui (t-s)\DD} G_L(s)}_{L_t^2\l^2_n}\ud s
     + \int_{I} \nm{e^{\ui (t-t_0)\DD}v_L(t_0)\, e^{\ui (t-s)\DD} F_K(s)}_{L_t^2\l^2_n}\ud s \\
     &\,+ \int_{I}\int_{I} \nm{e^{\ui (t-s)\DD} F_K(s)\, e^{\ui (t-s')\DD} G_L(s')}_{L_t^2\l^2_n} \ud s \,\ud s' \\
     \ls&\; L^{-\frac{1}{2}} \bnm{u_K(t_0)}_{\l^2_n(\Z)}\bnm{v_L(t_0)}_{\l^2_n(\Z)} \\
     &\;+ L^{-\frac{1}{2}} \bnm{u_K(t_0)}_{\l^2_n(\Z)}\nm{G_L}_{L_t^1\l^2_n(I\times\Z)}
     + L^{-\frac{1}{2}} \bnm{v_L(t_0)}_{\l^2_n(\Z)}\nm{F_K}_{L_t^1\l^2_n(I\times\Z)} \\
     &\;+ L^{-\frac{1}{2}} \nm{F_K}_{L_t^1\l^2_n(I\times\Z)} \nm{G_L}_{L_t^1\l^2_n(I\times\Z)} \\
     \leq&\; L^{-\frac{1}{2}} \Big[ \nm{u_K}_{L_t^\infty\l^2_n} + \nm{\big(\ui\dt+\DD\big)u_K}_{L_t^1\l^2_n} \Big] 
     \Big[ \nm{v_L}_{L_t^\infty\l^2_n} + \nm{\big(\ui\dt+\DD\big)v_L}_{L_t^1\l^2_n} \Big] .
\end{align*}
This proves \eqref{bilinear-Strichartz-est} recalling the definition of the $S_0$-norm (with $F,G$ re-substituted).
\end{proof}

\medskip

\bigskip

\section{Conserved Quantities and Well-posedness} \label{Sec:CL&GWP}

While the $1$-d cubic \eqref{1-c.NLS} is a completely integrable system with infinitely many conservation laws, \eqref{1-c.DNLS} does not preserve the complete integrability of its continuum counterpart (as opposed to the integrable discretization Ablowitz--Ladik system \cite{Ablowitz.Ladik1975}). 
Nevertheless, 
\eqref{1-c.DNLS} still enjoys conservation of $\l^2$-mass 
\begin{align}  \label{l^2-mass}
    \MM[\ah(t)] := \sum_{n\in\Z}\babs{\ah_n(t)}^2
    = \bnm{\ah(t)}_{\l_n^2}^2 
\end{align}
and conservation of energy 
\begin{align} 
    \EE[\ah(t)] := \sum_{n\in\Z} \Big\{\babs{\ah_{n+1}(t)-\ah_{n}(t)}^2 \pm \abs{\ah_n(t)}^4 \Big\} .
\end{align}
This can be verified by differentiating in time, using the equation \eqref{1-c.DNLS} (to substitute for $\ui\dt\ah_n$) and the discrete analogue of integration-by-parts technique: 
\begin{align} \label{dt-M}
    \dt \MM[\ah] &= \sum_{n\in\Z} 2\Re\!\big[\overline\ah_n\cdot \dt\ah_n \big]
    = \sum_{n\in\Z} 2\Im\!\big[(\ui\dt\ah_n)\overline\ah_n \big] \\
    &= 2\Im \sum_{n\in\Z} \Big[-\big(\ah_{n+1}-2\ah_n+\ah_{n-1}\big)
    \pm2|\ah_n|^2\ah_n\Big]\overline\ah_n \no\\
    &= 2\Im \sum_{n\in\Z} \Big[\,\babs{\ah_{n+1}-\ah_{n}}^2 \pm2|\ah_n|^4 \Big]  
    = 0. \no
\end{align}
The conservation of $\EE[\ah]$ can be proved in similar fashion. 

To provide an alternative proof of the $\l^2$-mass conservation:
On the Fourier side, the equation \eqref{1-c.DNLS} becomes 
\begin{align} \label{1-c.DNLS-Fourier}
    \dt\widehat{\ah}(\theta) = \ui\big(\ue^{\ui\theta}+\ue^{-\ui\theta}-2 \big)\widehat{\ah}(\theta) \mp \ui2\int_{\theta=\theta_1+\theta_2+\theta_3} \widehat{\ah}(\theta_1) \widehat{\overline{\ah}}(\theta_2) \widehat{\ah}(\theta_3) ,
\end{align}
where the nonlinear term is calculated by using \eqref{convolution-paraproduct} repeatedly.
We may also rewrite the $\l^2$-mass as (by Plancherel or \eqref{Parseval-paraproduct})
\begin{align} \label{l^2-mass-Fourier}
    \MM[\ah]  
    = \int_{-\pi}^{\pi} \widehat{\ah}(\theta)\overline{\widehat{\ah}(\theta)} \,\tfrac{\ud\theta}{2\pi}
    =\int_{-\pi}^{\pi} \widehat{\ah}(\theta)\widehat{\overline\ah}(-\theta) \,\tfrac{\ud\theta}{2\pi}
    = \int_{\theta_1+\theta_2=0} \widehat{\ah}(\theta_1) \widehat{\overline{\ah}}(\theta_2).
\end{align}
Then we apply time derivative to \eqref{l^2-mass-Fourier}, use symmetry and plug in the equation \eqref{1-c.DNLS-Fourier} to find 
\begin{align} \label{dt-M-Fourier}
    \dt \MM[\ah] &= 2\Re \int_{\theta_1+\theta_2=0} \dt\widehat{\ah}(\theta_1) \widehat{\overline{\ah}}(\theta_2) \\
    &= 2\Re \int_{\theta_1+\theta_2=0} \ui\big(\ue^{\ui\theta_1}+\ue^{-\ui\theta_1}-2 \big) \widehat{\ah}(\theta_1) \widehat{\overline{\ah}}(\theta_2) 
    \mp 2\Re \int_{\theta_1+\theta_2+\theta_3+\theta_4=0} \ui2\, \widehat{\ah}(\theta_1) \widehat{\overline{\ah}}(\theta_2) \widehat{\ah}(\theta_3) \widehat{\overline{\ah}}(\theta_4) . \no
\end{align}
Observe that each of the two expressions above is invariant if one takes complex conjugate of the integral, and  makes change of variables $(\theta_1,\theta_2)\mapsto(-\theta_2,-\theta_1)$ or $(\theta_1,\theta_2,\theta_3,\theta_4)\mapsto(-\theta_2,-\theta_1,-\theta_4,-\theta_3)$ respectively.
Thus, the multiplier in the first term may be replaced by $\frac{1}{2}\big[\ui(\ue^{\ui\theta_1}+\ue^{-\ui\theta_1}-2 ) - \ui(\ue^{\ui\theta_2}+\ue^{-\ui\theta_2}-2 ) \big]$, which equals zero on the set where $\theta_1+\theta_2=0$. 
Similarly, we see  the second term cancels as well.
Summarizing, we have found that both terms on the right side of \eqref{dt-M-Fourier} vanish, and this yields $\dt\MM[\ah]\equiv0$.

Correspondingly, the continuum system \eqref{NLS_low}-\eqref{NLS_high} conserves the $L^2$-mass 
\begin{align}
    \vec{M}[\pe,\qe] := \int_\R \big( \abs{\pe}^2 \!, \abs{\qe}^2 \big) \,\ud x,
\end{align}
as well as the energy (supposing the solution has sufficient regularity)
\begin{align}
    E[\pe,\qe] := \int_\R \Big(\abs{\partial_x\pe}^2 - \abs{\partial_x\qe}^2
    \pm\abs{\pe}^4 \pm\abs{\qe}^4 \pm4\abs{\pe}^2\!\abs{\qe}^2 \Big) \,\ud x .
\end{align}

With the mass conservations, we may now address the question of global well-posedness (GWP) for both the discrete model \eqref{1-c.DNLS} in $\l^2$ and the continuum model \eqref{NLS_low}-\eqref{NLS_high} in $L^2$.

In the case of \eqref{1-c.DNLS}, local solutions in $\ell^2(\Z)$ are constructed by Picard's theorem observing that \eqref{1-c.DNLS}-RHS is locally Lipschitz.
Then the $\l^2$-mass conservation guarantees that such solutions can be extended globally via a continuity argument.
As a byproduct,
the uniform $\l^2_n$-bound follows again from the conservation of $\MM[\ah]$ together with the $\l^2_n$-bound \eqref{ah0-h'} on the initial data $\ah(0)$.

\begin{proposition}[GWP for \eqref{1-c.DNLS}: $\l^2_n$-bound] \label{Prop:l^2-GWP}
        Given $\psi_0, \phi_0 \in L^2(\R)$, 
        the evolution \eqref{1-c.DNLS} with initial data \eqref{E:initial data} admits a global solution $\uh\in C_t\ell_n^2(\R\times\Z)$. Moreover,
        \begin{equation} \label{l^2-bdd}
        \MM[\ah(t)] = \nm{\ah(t)}_{\l_n^2}^2 =\nm{\ah(0)}_{\l_n^2}^2
        \leq h \big( \|\psi_0\|_{L^2}^2 + \|\phi_0\|_{L^2}^2 \big)
        \qquad\text{uniformly for $t\in \R$}.
        \end{equation}
\end{proposition}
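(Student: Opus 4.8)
The plan is to establish Proposition~\ref{Prop:l^2-GWP} via the standard local-existence-plus-conservation-law scheme, which is elementary in the discrete setting because the lattice Laplacian $\DD$ is a bounded operator on $\l^2_n(\Z)$. First I would check local well-posedness: rewrite \eqref{1-c.DNLS} in Duhamel form $\ah(t) = e^{\ui t\DD}\ah(0) - \ui\int_0^t e^{\ui(t-s)\DD} \CC[\ah(s)]\,\ud s$, and observe that the map $a\mapsto \CC[a] = (\pm 2|a_n|^2 a_n)_n$ is locally Lipschitz on $\l^2_n(\Z)$. This last fact uses the embedding $\l^2\hookrightarrow \l^6$ from Lemma~\ref{Lem:l^p-embedding}: for $a,b$ in a ball of radius $R$ in $\l^2_n$, one has $\nm{\CC[a]-\CC[b]}_{\l^2_n} \lesssim (\nm{a}_{\l^6_n}^2 + \nm{b}_{\l^6_n}^2)\nm{a-b}_{\l^6_n} \lesssim R^2 \nm{a-b}_{\l^2_n}$. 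A contraction-mapping argument on $C_t\l^2_n([-T_0,T_0]\times\Z)$ with $T_0 = T_0(\nm{\ah(0)}_{\l^2_n})$ then yields a unique local solution, and the existence time depends only on the $\l^2_n$-norm of the data.

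Next I would prove the conservation $\MM[\ah(t)] = \MM[\ah(0)]$ for the local solution. This is exactly the computation already displayed in \eqref{dt-M}: differentiate $\MM[\ah(t)] = \sum_n |\ah_n(t)|^2$ in time (justified because the sum converges uniformly on the lifespan, $\ah\in C^1_t\l^2_n$ from the Duhamel representation and boundedness of $\DD$ and of the nonlinearity), substitute $\ui\dt\ah_n$ from the equation, and use the discrete summation-by-parts identity $\sum_n (\ah_{n+1}-2\ah_n+\ah_{n-1})\overline{\ah_n} = -\sum_n |\ah_{n+1}-\ah_n|^2$ together with the observation that $\sum_n |\ah_n|^2\ah_n\overline{\ah_n} = \sum_n |\ah_n|^4$ is real; hence $\dt\MM[\ah] = 2\Im\sum_n[\,|\ah_{n+1}-\ah_n|^2 \pm 2|\ah_n|^4] = 0$. (Alternatively one may cite the Fourier-side cancellation argument in \eqref{dt-M-Fourier}.)

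Finally I would upgrade the local solution to a global one by the usual continuity/bootstrap argument: since $\nm{\ah(t)}_{\l^2_n}^2 = \MM[\ah(t)] = \MM[\ah(0)] = \nm{\ah(0)}_{\l^2_n}^2$ stays bounded on any interval of existence, and the local existence time depends only on this norm, the solution cannot blow up in finite time and extends to all of $\R$, remaining in $C_t\l^2_n(\R\times\Z)$. Combining the conservation identity with the bound \eqref{ah0-h'} on the sampled initial data, namely $\nm{\ah(0)}_{\l^2_n}^2 = h(\bnm{P_{\leq N}\psi_0}_{L^2_x}^2 + \bnm{P_{\leq N}\phi_0}_{L^2_x}^2) \leq h(\nm{\psi_0}_{L^2}^2 + \nm{\phi_0}_{L^2}^2)$, yields \eqref{l^2-bdd}.

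I do not anticipate a serious obstacle here: the discrete model is subcritical in the strongest possible sense, since $\DD$ is bounded on $\l^2_n$ and no Strichartz machinery is needed for this particular statement. The only point requiring a little care is the rigorous justification of differentiating the infinite sum $\MM[\ah(t)]$ termwise, but this follows immediately from $\ah\in C^1_t\l^2_n$ (a consequence of the Duhamel formula and the $\l^2_n$-boundedness of both $\DD$ and $a\mapsto\CC[a]$ on bounded sets) and the Cauchy--Schwarz bound $|\dt\MM[\ah]| \leq 2\nm{\ah}_{\l^2_n}\nm{\dt\ah}_{\l^2_n}$ allowing interchange of sum and derivative.
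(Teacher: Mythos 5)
Your proposal is correct and follows essentially the same route as the paper: Picard iteration/contraction mapping in $C_t\l^2_n$ using that the right-hand side of \eqref{1-c.DNLS} is locally Lipschitz (the paper leaves the Lipschitz verification implicit, while you supply it via H\"older and Lemma~\ref{Lem:l^p-embedding}), followed by the mass-conservation computation of \eqref{dt-M}, a continuity argument for global extension, and the initial-data bound \eqref{ah0-h'}. No gaps.
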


Regarding \eqref{NLS_low}-\eqref{NLS_high},
similar arguments as in \cite{Tsutsumi1987} suggest that such equations are globally well-posed in $L^2(\R)$:
Local well-posedness is obtained by contraction mapping in Strichartz spaces (which impose additional spacetime bounds) using the estimates recalled in Lemma~\ref{Lem:l.c-Strichartz};
this is then rendered global using conservation of the $L^2$-norm.

\begin{proposition}[GWP for \eqref{NLS_low}-\eqref{NLS_high}: uniqueness] \label{Prop:GWP-NLS}
    If the initial data $(\psi_0,\phi_0)\in L^2(\R)$, then there exists a \emph{unique} global solution $(\psi,\phi)\in \big(C_tL^2_x\cap L_{t,loc}^6L_x^6\big)(\R\times\R)$ to \eqref{NLS_low}-\eqref{NLS_high}. 
\end{proposition}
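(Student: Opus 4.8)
\textbf{Proof proposal for Proposition~\ref{Prop:GWP-NLS}.}

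The plan is to run the standard Tsutsumi-type $L^2$ well-posedness argument, adapted to the coupled system \eqref{NLS_low}-\eqref{NLS_high}, using the $L^6_{t,x}$ Strichartz pair on $\R^{1+1}$ (which is admissible: $\tfrac{2}{6}+\tfrac{1}{6}=\tfrac12$). The only structural feature to keep in mind is that the nonlinearity for each component is cubic and depends on \emph{both} $\psi$ and $\phi$, but this causes no new difficulty since the coupling terms $\abs{\psi}^2\phi$, $\abs{\phi}^2\psi$, etc., are still trilinear with the same scaling as $\abs{\psi}^2\psi$, and all the propagators $e^{\pm\ui t\Delta}$ obey the same Strichartz bounds (Lemma~\ref{Lem:l.c-Strichartz}; the sign flip in \eqref{NLS_high} only conjugates the symbol and is irrelevant for dispersive estimates).

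\emph{Step 1 (local existence and uniqueness in Strichartz spaces).} For $(\psi_0,\phi_0)\in L^2(\R)$, set up the Duhamel map on the Banach space $X_I := \big(C_tL^2_x\cap L^6_{t,x}\big)(I\times\R)$ for the pair $(\psi,\phi)$, with norm the max of the two components' $X_I$-norms. Using Lemma~\ref{Lem:l.c-Strichartz} with $(q,p)=(6,6)$ and the dual exponent $(\tilde q',\tilde p')=(6/5,6/5)$, together with H\"older in spacetime ($\nm{F_1\overline{F_2}F_3}_{L^{6/5}_{t,x}} \leq \prod_j \nm{F_j}_{L^6_{t,x}}$), one gets for a solution on $I\ni 0$
\begin{align*}
    \nm{(\psi,\phi)}_{X_I} \ls \nm{(\psi_0,\phi_0)}_{L^2_x} + \nm{(\psi,\phi)}_{L^6_{t,x}}^3 .
\end{align*}
A contraction-mapping/continuity argument then yields a unique local solution on an interval $I$ whose length depends only on a suitable measure of the data (and, as usual, purely on the $L^6$-norm of the free evolution $\big(e^{\ui t\Delta}\psi_0, e^{-\ui t\Delta}\phi_0\big)$, which is finite and small on short intervals by dominated convergence); uniqueness in $C_tL^2_x\cap L^6_{t,x}$ on the common interval of existence follows from the same estimate applied to a difference of two solutions. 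This is where one also records the blow-up criterion: the solution extends as long as its $L^6_{t,x}$-norm stays finite.

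\emph{Step 2 (globalization via $L^2$ conservation).} The conserved quantity $\vec M[\psi,\phi]=\int_\R(\abs\psi^2,\abs\phi^2)\,\ud x$ (established earlier in this section) gives an a priori bound $\nm{\psi(t)}_{L^2_x}^2+\nm{\phi(t)}_{L^2_x}^2 = \nm{\psi_0}_{L^2_x}^2+\nm{\phi_0}_{L^2_x}^2$ for all $t$ in the interval of existence. Since the local existence time depends only on the data through a quantity controlled by this conserved norm (the standard subtlety being that at $L^2$-critical regularity the local time is \emph{not} uniformly bounded below by the norm alone), the globalization is carried out as in \cite{Tsutsumi1987}: one partitions any compact time interval into finitely many subintervals on each of which the free evolution has small $L^6_{t,x}$-norm — possible because $e^{\pm\ui t\Delta}$ of an $L^2$ function lies in $L^6_{t,x}(\R\times\R)$ globally by the Strichartz inequality — and iterates the local theory with the uniformly bounded $L^2$-norm as data at each step, accumulating a finite $L^6_{t,x}$-bound on $[-T,T]$ for every $T$. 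By the blow-up criterion this forces the solution to be global, and $(\psi,\phi)\in(C_tL^2_x\cap L^6_{t,loc}L^6_x)(\R\times\R)$.

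\emph{Main obstacle.} The only genuinely delicate point is the one inherent to $L^2$-critical NLS: the local existence time does not depend on $\nm{(\psi_0,\phi_0)}_{L^2}$ alone, so the globalization cannot be a naive "conserved norm $\Rightarrow$ uniform time step" iteration. One must instead exploit global finiteness of the free-evolution Strichartz norm and a divisibility/partition argument on $[-T,T]$, exactly as in \cite{Tsutsumi1987}; the coupled structure adds only bookkeeping. Everything else — Strichartz, H\"older, contraction, and the conservation law — is routine, so the proof is essentially a citation to \cite{Tsutsumi1987} with the nonlinearity replaced by the coupled cubic one, and I would present it at that level of detail.
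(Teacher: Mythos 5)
Your proposal follows exactly the route the paper takes, which is itself only a citation-level sketch (``contraction mapping in Strichartz spaces using Lemma~\ref{Lem:l.c-Strichartz}, then globalize by $L^2$ conservation, as in \cite{Tsutsumi1987}''); the coupled cubic structure is indeed only bookkeeping. Two small corrections. First, the displayed H\"older inequality is wrong as written: $\tfrac16+\tfrac16+\tfrac16=\tfrac12\neq\tfrac56$, so $\nm{F_1\overline{F_2}F_3}_{L^{6/5}_{t,x}}$ is \emph{not} bounded by $\prod_j\nm{F_j}_{L^6_{t,x}}$; the standard pairing is $\nm{F_1\overline{F_2}F_3}_{L^{6/5}_{t,x}(I\times\R)}\leq |I|^{\frac12}\nm{F_1}_{L^\infty_tL^2_x}\nm{F_2}_{L^6_{t,x}}\nm{F_3}_{L^6_{t,x}}$ (this is precisely the estimate the paper uses in its discrete analogue \eqref{3:07}). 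Second, your ``main obstacle'' is not actually present: the $1$-d cubic NLS is mass-\emph{sub}critical ($s_c=-\tfrac12$), and the factor $|I|^{\frac12}$ above shows the local existence time depends only on $\nm{(\psi_0,\phi_0)}_{L^2}$, so globalization is the naive uniform-time-step iteration using the conserved mass. The divisibility/partition argument you describe (small free-evolution $L^6_{t,x}$-norm on each subinterval) is what one needs at mass-critical regularity, e.g.\ for the quintic equation in $1$-d; it also works here, so your proof is not wrong, merely more elaborate than necessary.
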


\begin{remark}
The construction of solutions via contraction mapping in $C_tL^2_x\cap L^6_{t,x}$ also ensures these solutions are unique in this class and the cubic nonlinearities make sense as spacetime distributions.
In particular, the (conditional) uniqueness result will play a crucial part in closing the proof of the main theorem via the compactness-uniqueness argument (see the proof of Theorem~\ref{Thm:main} at the end of Section~\ref{Sec:Convergence}).
\end{remark}

\bigskip

\section{Almost Conservation Law} \label{Sec:Almost_CL}

A key to our later analysis relies on suppression of the high-frequency component of solutions (or the `medium' frequencies of $\ah$ in our case), so as to prevent a dynamical ``low-to-high frequency cascade'' scenario, 
where the `energy' starts off concentrated in low frequencies but moves increasingly to higher frequencies as time progresses. 

To this end, we recall the Fourier truncation operators $\Ps_{\!<\lambda}$ and $\Ps_{\geq\lambda}$ introduced in Definition~\ref{Def:Ps}, 
which help to give insight into how the conserved $\l^2$-mass $\MM[\ah]$ is moved around in frequency space during the DNLS evolution.
Even though the modified $\l^2$-mass $\MM[\Ps_{\!<\lambda}\ah]$ (or $\MM[\Ps_{\geq\lambda}\ah]$) is not exactly conserved, 
it turns out to enjoy an \emph{almost} conservation law, in the sense that 
the growth in time of this quantity is very small
for sufficiently large $\k=\lambda/h\gg 1$ (which marks the transition from low to high frequencies). 
Note that if $\k\rt\infty$ (independent of $h$), then $\Ps_{\!<\k h}$ formally approximates to the identity, for which $\MM[\Ps_{\!<\k h}\ah]$ would be constant. 
(When $\k\geq h^{-1}$ so that $\lambda=\k h\geq 1$, we may set $\Ps_{\!<\lambda}={\rm Id}$.) 
One can then use this almost conserved quantity to generate long-time control of the solution in much the same way that a genuine conservation law can be used to ensure global well-posedness.

\begin{proposition}[Almost conservation of frequency-truncated $\l^2$-mass] \label{Prop:ACL} 
Under the hypotheses of Theorem~\ref{Thm:main}, specifically, 
let $\ah$ be the solution to \eqref{1-c.DNLS} constructed by Proposition~\ref{Prop:l^2-GWP}.
Assume in addition that for any $T>0$ it satisfies 
\begin{align} \label{Strichartz-assumption}
    \nm{\ah}_{L^4_{\tau}\l^\infty_n ([-h^{-2}T, h^{-2}T]\times\Z)} \ls 
    h^{\frac{1}{2}} , 
\end{align}
where we use the shorthand $\tau=h^{-2}t$. 

Then for $\k\gg1$ we have
\begin{align} \label{ACL}
    \sup_{\abs{t}\leq T}\,
    \MM\big[\Ps_{\!<\k h}\ah(h^{-2}t)\big] = \MM\big[\Ps_{\!<\k h}\ah(0)\big] 
    + O(\k^{-\frac{1}{2}+})\cdot  
    \nm{\ah(0)}_{\l_n^2}^2 , 
\end{align}
or equivalently, for all $\abs{t}\leq T$,
\begin{align} \label{ACL'}
    \Big|\MM\big[\Ps_{\!<\k h}\ah(h^{-2}t)\big] - \MM\big[\Ps_{\!<\k h}\ah(0)\big] \Big|
    = \Big|\MM\big[\Ps_{\geq\k h}\ah(h^{-2}t)\big] - \MM\big[\Ps_{\geq\k h}\ah(0)\big] \Big|
    \ls 
    \k^{-\frac{1}{2}+} 
    \nm{\ah(0)}_{\l_n^2}^2 ,
\end{align}
where the implicit constant is independent of $\k,h$.
\end{proposition}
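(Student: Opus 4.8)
The plan is to differentiate the modified mass $\MM[\Ps_{\!<\k h}\ah]$ in time, identify the resulting nonlinear expression, and exploit frequency-cancellation together with the bilinear Strichartz estimate of Lemma~\ref{Lem:bilinear-Strichartz} to gain the $\k^{-1/2+}$ smallness. First I would write, using \eqref{1-c.DNLS-Fourier}, the exact identity for $\dt\MM[\Ps_{\!<\lambda}\ah]$ in Fourier variables: the linear part is purely imaginary and drops out on the diagonal $\theta_1+\theta_2=0$ exactly as in \eqref{dt-M-Fourier}, so only the quartic term survives, of the schematic form
\begin{align*}
    \dt\MM[\Ps_{\!<\lambda}\ah]
    = \mp 8\,\Im\!\!\int_{\theta_1+\theta_2+\theta_3+\theta_4=0}\!\! m_\lambda(\theta)\,\widehat{\ah}(\theta_1)\widehat{\overline\ah}(\theta_2)\widehat{\ah}(\theta_3)\widehat{\overline\ah}(\theta_4),
\end{align*}
where $m_\lambda$ is (after symmetrization in the way the multiplier was symmetrized in the remark following \eqref{dt-M-Fourier}) a combination of indicator functions $\id_{\mathsf G_\lambda}$ evaluated at the various partial sums. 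The crucial structural point is that $m_\lambda$ vanishes when all four frequencies lie in $\mathsf G_\lambda$ (there the truncation is the identity and mass is conserved), so $m_\lambda$ is supported where \emph{at least one} frequency has $\abs{\sin\theta_j}\gtrsim\lambda$, i.e. $\k h\lesssim \abs{\sin\theta_j}$; by the frequency constraint one then forces at least \emph{two} of the four frequencies to have size $\gtrsim\lambda$ in the relevant Littlewood–Paley sense.

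Next I would dyadically decompose each $\widehat{\ah}(\theta_j)$ into pieces $a_{K_j}$ localized to $\abs{\sin\theta}\sim K_j$ (plus a core piece near the inflection-free region handled by $\Ps_{<3/4}$ and Lemma~\ref{P:loc Strichartz}), integrate the differentiated identity over $\tau\in[-h^{-2}T,h^{-2}T]$, and estimate the spacetime integral $\int\!\int m_\lambda\,\widehat{\ah}_{K_1}\widehat{\overline\ah}_{K_2}\widehat{\ah}_{K_3}\widehat{\overline\ah}_{K_4}$ by pairing the two highest-frequency factors against each other via \eqref{bilinear-Strichartz-est}, which gains a factor $L^{-1/2}$ with $L$ the larger of the two top frequencies; since $L\gtrsim\lambda=\k h$, reverting to the rescaled continuum picture via \eqref{lqp-Lqp} this becomes a gain of order $(\k h)^{-1/2}\cdot h^{\text{(scaling)}}$. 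The remaining two factors are controlled in $L^4_\tau\l^\infty_n$ using exactly the Strichartz hypothesis \eqref{Strichartz-assumption}, $\nm{\ah}_{L^4_\tau\l^\infty_n}\lesssim h^{1/2}$, together with $L^\infty_\tau\l^2_n$-bounds from the mass bound \eqref{l^2-bdd}, $\nm{\ah}_{L^\infty_\tau\l^2_n}\simeq h^{1/2}$. Summing the dyadic pieces, the geometric series in the small frequencies converges and the large ones are summed against the $L^{-1/2}$ gain with an $\ep$-loss (hence the $\k^{-1/2+}$ rather than $\k^{-1/2}$), and the $h$-powers are designed to cancel so that the net bound is $O(\k^{-1/2+})\nm{\ah(0)}_{\l^2_n}^2$, as claimed; the equivalent form \eqref{ACL'} follows since $\MM[\Ps_{\geq\k h}\ah]=\MM[\ah]-\MM[\Ps_{<\k h}\ah]$ and $\MM[\ah]$ is exactly conserved.

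The main obstacle I anticipate is bookkeeping the frequency interactions so as to guarantee that the bilinear estimate is applied to a genuinely \emph{separated} pair $K\ll L$ (Lemma~\ref{Lem:bilinear-Strichartz} requires $K<L/2$): when the two top frequencies are comparable one cannot directly invoke \eqref{bilinear-Strichartz-est}, and one must instead subdivide the comparable-frequency region into $O(\log)$ or finitely many sub-blocks on which a modulation/transversality argument restores separation — concretely, even for $K\sim L$ the Jacobian $\abs{\sin\theta_1-\sin\theta_2}$ in the proof of Lemma~\ref{Lem:bilinear-Strichartz} is nondegenerate away from $\theta_1=\theta_2$ and $\theta_1=\pi-\theta_2$, so a further angular decomposition recovers the same $L^{-1/2}$-type bound at the cost of another $\k^{0+}$ loss. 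Verifying that this extra decomposition, combined with the sharp-cutoff $\l^p$-boundedness used throughout, still closes with the stated power of $\k$ and no residual power of $h$ is the delicate part; everything else is a routine, if lengthy, Strichartz bookkeeping driven by \eqref{Strichartz-assumption} and \eqref{l^2-bdd}.
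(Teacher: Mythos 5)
Your overall architecture matches the paper's: differentiate $\MM[\Ps_{\!<\k h}\ah]$, observe that the linear part cancels and only a quadrilinear form with multiplier built from $\chi=\id_{\mathsf G_{\k h}}$ survives, note that the multiplier vanishes unless some frequency escapes $\mathsf G_{\k h}$, then decompose dyadically and close with the bilinear Strichartz estimate, the hypothesis \eqref{Strichartz-assumption}, and the mass bound \eqref{l^2-bdd}. However, there is a genuine gap in your choice of pairing. You propose to apply \eqref{bilinear-Strichartz-est} to the \emph{two highest-frequency} factors. By the constraint $\sum_i\theta_i=0$ these two are generically comparable, so Lemma~\ref{Lem:bilinear-Strichartz} (which requires $K<L/2$) does not apply, and your proposed repair --- a Whitney/angular decomposition exploiting the Jacobian $\abs{\sin\theta_1-\sin\theta_2}$ --- does not close: on the diagonal blocks the transversality degenerates, and the diagonal contribution reduces to an $L^4_{t,x}$-type square which in one dimension carries \emph{no} gain of the form $L^{-\frac12}$ (take $a_K=b_L$ a single wave packet at frequency $\sim L$ to see the comparable-frequency bilinear $L^2_{t,\,x}$ estimate with an $L^{-1/2}$ gain is simply false). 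Summing the off-diagonal Whitney scales also diverges without additional orthogonality input, so this route cannot produce the $\k^{-\frac12+}$ decay.

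The paper avoids this entirely by exploiting one more piece of structure in the multiplier that you did not use: after symmetrizing, $m(\vec\theta)=\chi(\theta_4)^2-\chi(\theta_1)\chi(\theta_2)\chi(\theta_3)\chi(\theta_4)$ vanishes unless $\chi(\theta_4)\neq 0$, i.e.\ unless $\abs{\sin(\theta_4)}<\k h$. Hence the fourth factor is \emph{forced to be low-frequency} ($N_4\lesssim\k$) while the largest of the remaining three satisfies $N_1\gtrsim\k$, so the pair $u_1u_4$ is automatically frequency-separated and \eqref{bilinear-Strichartz-est} applies directly, yielding the factor $(N_1h)^{-\frac12}$ with $N_1\gtrsim\k$; the other pair $u_2u_3$ needs no bilinear estimate at all and is handled by H\"older as $(h^{-2}T)^{\frac14}\nm{\ah}_{L^\infty_\tau\l^2_n}\nm{\ah}_{L^4_\tau\l^\infty_n}$ using \eqref{Strichartz-assumption} and \eqref{l^2-bdd}. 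With this high--low pairing the $h$-powers cancel exactly as you anticipated and the dyadic sum converges via $N_1^{-\frac12}\lesssim\k^{-\frac12+}(N_1N_2N_3N_4)^{0-}$. So the missing idea is not extra decomposition machinery but the observation that the projection sitting on the outermost factor pins $\theta_4$ to low frequency, dictating which two factors to pair.
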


\begin{remark}
This proposition of ``almost conservation law'' will play a key role in proving Proposition~\ref{Prop:Schrodinger-Dynamics} on persistence of frequency-localization and Schr\"odinger-like Strichartz estimates, as well as the ``equicontinuity in space'' property in Proposition~\ref{thm:pre-compactness}.

For the application of this result in the proof of spatial equicontinuity in Subsection~\ref{SubSec:Space-Equicontinuity},
the precise value of the exponent $-\frac{1}{2}+$ in \eqref{ACL},\eqref{ACL'} is not particularly important; any negative exponent would have sufficed there. 
However, for Proposition~\ref{Prop:Schrodinger-Dynamics}, the exponent $-\frac{1}{2}+$ here is directly tied to the power $\frac{1}{4}-$ of $h$ in \eqref{E:suppress}, which in turn is responsible for the closure of the Strichartz estimates \eqref{E:Strichartz} and \eqref{E:Strichartz'}. 
    
Nonetheless, The exponent $-\frac{1}{2}+$ here might not be optimal. 
To improve the power of $\k$, one can try to exploit other conservation laws, modify the (almost) conserved quantities with additional correction terms (to damp out some non-resonant fluctuations), 
replace the sharp Fourier cutoff with a smooth one, and even possibly adopt more refined estimates.
\end{remark}

\begin{proof}
To investigate the behavior in time of $\MM[\Ps_{\!<\k h}\ah(\tau)]$ (for $\tau=h^{-2}t$),
we first compute the time derivative of this quantity:
Following the same strategy as \eqref{dt-M} in deriving the $\l^2$-mass conservation, we deduce  
(For notational simplicity, we shall abbreviate $\Ps_{\!<\k h}$ as $\Ps$, with symbol $\id_{\mathsf G_{\k h}}(\theta)$ denoted by $\chi(\theta)$, and omit the time variable here and in the sequel.)
\begin{align*}
        \partial_{\tau} \MM[\Ps\ah] &= \sum_{n\in\Z} 2\Re\!\big[\overline{(\Ps\ah)}_n\cdot \partial_{\tau}(\Ps\ah)_n \big] = \sum_{n\in\Z} 2\Im\!\big[\ui\partial_{\tau}(\Ps\ah)_n \overline{(\Ps\ah)}_n \big] \\
    &= 2\Im \sum_{n\in\Z} \Big\{-\big[(\Ps\ah)_{n+1}-2(\Ps\ah)_n+(\Ps\ah)_{n-1}\big]
    \pm2\Ps\big(|\ah_n|^2\ah_n\big)\Big\} \overline{(\Ps\ah)}_n \no\\
    &= 2\Im \sum_{n\in\Z} \babs{(\Ps\ah)_{n+1}-(\Ps\ah)_{n}}^2 \pm 4\Im\sum_{n\in\Z} \Ps\big(|\ah_n|^2\ah_n\big) \overline{(\Ps\ah)}_n  \\
        &= \pm 4 \Im \sum_{n\in\Z} \overline{(\Ps\ah)}_n \cdot \Big[\Ps\big(|\ah_n|^2\ah_n \big) - |(\Ps\ah)_n|^2(\Ps\ah)_n \Big] , 
\end{align*}
where we have applied $\Ps$ to \eqref{1-c.DNLS} (to substitute for $\ui\partial_{\tau}(\Ps\ah)_n$) and note that $\Ps\big(|\ah_n|^2\ah_n\big) \overline{(\Ps\ah)}_n$ is not necessarily real, while $|(\Ps\ah)_n|^2(\Ps\ah)_n\overline{(\Ps\ah)}_n = |(\Ps\ah)_n|^4$ is.
By the Parseval formula \eqref{Parseval-paraproduct}, the expression above may be rewritten as a paraproduct on the Fourier side.
We then use the relation $\theta_1+\theta_2+\theta_3=-\theta_4$ and evenness of the function $\chi(\theta)$ to obtain 
\begin{align} \label{dt-M(Pu)-Fourier}
        \partial_{\tau} \MM[\Ps\ah] &= 
        \pm 4\Im \int_{\theta_1+\theta_2+\theta_3+\theta_4=0} \Big[\chi(\theta_1+\theta_2+\theta_3)\chi(\theta_4) - \chi(\theta_1)\chi(\theta_2)\chi(\theta_3)\chi(\theta_4) \Big] \widehat{\ah}(\theta_1) \widehat{\overline{\ah}}(\theta_2) \widehat{\ah}(\theta_3) \widehat{\overline{\ah}}(\theta_4) \\
        &= \pm 4\Im \int_{\theta_1+\theta_2+\theta_3+\theta_4=0} \Big[\chi(\theta_4)^2 - \chi(\theta_1)\chi(\theta_2)\chi(\theta_3)\chi(\theta_4) \Big] \widehat{\ah}(\theta_1) \widehat{\overline{\ah}}(\theta_2) \widehat{\ah}(\theta_3) \widehat{\overline{\ah}}(\theta_4) . \no
\end{align}

Alternatively, a formal imitation of the Fourier proof of $\l^2$-mass conservation (as in \eqref{dt-M-Fourier}) reveals that 
\begin{align} \label{dt-M(Pu)-Fourier'}
    \partial_{\tau} \MM[\Ps\ah] 
    &= 2\Re \int_{\theta_1+\theta_2=0} \tfrac{1}{2}\big[\chi(\theta_1)^2+\chi(\theta_2)^2\big] \partial_{\tau}\widehat{\ah}(\theta_1) \widehat{\overline{\ah}}(\theta_2) \\
    &= \tfrac{1}{2}\Re \int_{\theta_1+\theta_2=0} \ui \big[\chi(\theta_1)^2+\chi(\theta_2)^2\big] \Big[\big(\ue^{\ui\theta_1}+\ue^{-\ui\theta_1}-2 \big) - \big(\ue^{\ui\theta_2}+\ue^{-\ui\theta_2}-2 \big) \Big] \widehat{\ah}(\theta_1) \widehat{\overline{\ah}}(\theta_2) \no \\
    &\quad \mp 2\Re \int_{\theta_1+\theta_2+\theta_3+\theta_4=0} \ui\big[\chi(\theta_1+\theta_2+\theta_3)^2+\chi(\theta_4)^2\big] \widehat{\ah}(\theta_1) \widehat{\overline{\ah}}(\theta_2) \widehat{\ah}(\theta_3) \widehat{\overline{\ah}}(\theta_4) \no \\
    &= \pm 4\Im \int_{\theta_1+\theta_2+\theta_3+\theta_4=0} \chi(\theta_4)^2\, \widehat{\ah}(\theta_1) \widehat{\overline{\ah}}(\theta_2) \widehat{\ah}(\theta_3) \widehat{\overline{\ah}}(\theta_4) . \no
\end{align}
In particular, the term arising from the dispersion (in the second line) cancels since $\big(\ue^{\ui\theta_1}+\ue^{-\ui\theta_1}-2 \big) - \big(\ue^{\ui\theta_2}+\ue^{-\ui\theta_2}-2 \big)=0$ on the line $\theta_1+\theta_2=0$.
We point out that the resulting quadrilinear form on \eqref{dt-M(Pu)-Fourier'}-RHS actually agrees with the expression on \eqref{dt-M(Pu)-Fourier}-RHS because they are both equal to 
\begin{align*}
    \mp \Im \int_{\theta_1+\theta_2+\theta_3+\theta_4=0} \Big[\chi(\theta_1)^2 - \chi(\theta_2)^2 + \chi(\theta_3)^2 - \chi(\theta_4)^2 \Big] \widehat{\ah}(\theta_1) \widehat{\overline{\ah}}(\theta_2) \widehat{\ah}(\theta_3) \widehat{\overline{\ah}}(\theta_4)
\end{align*}
via the symmetrization generated by permutations on the even arguments $\theta_2$ and $\theta_4$, the odd arguments $\theta_1$ and $\theta_3$, as well as the additional symmetry $m(\theta_1,\theta_2,\theta_3,\theta_4)\mapsto -\overline{m}(-\theta_2,-\theta_1,-\theta_4,-\theta_3)$ for the multiplier $m(\vec{\theta})$.
We also remark that in the case $\chi\equiv1$ (which corresponds to $\k>h^{-1}$ so that $\Ps={\rm Id}$), this results in a vanishing multiplier, thus reproducing the Fourier proof of $\MM[\ah]$ conservation (in Section~\ref{Sec:CL&GWP}).

Integrating \eqref{dt-M(Pu)-Fourier} in time and by the fundamental theorem of calculus, 
we find 
(here $\tau=h^{-2}t$ and $\sigma=h^{-2}s$ for $|t|\leq T$ and $0\leq s/t \leq 1$)
\begin{align*} 
    &\MM\big[\Ps\ah(\tau)\big] - \MM\big[\Ps\ah(0)\big] 
    = \int_0^{\tau} \partial_{\tau}\MM\big[\Ps\ah(\sigma)\big]\,\ud\sigma \\
    =& \pm 4\Im \int_0^{\tau} \!\int_{\sum_{i=1}^4\theta_i=0} \Big[\chi(\theta_4)^2 - \chi(\theta_1)\chi(\theta_2)\chi(\theta_3)\chi(\theta_4) \Big] \widehat{\ah}(\sigma,\theta_1) \widehat{\overline{\ah}}(\sigma,\theta_2) \widehat{\ah}(\sigma,\theta_3) \widehat{\overline{\ah}}(\sigma,\theta_4). 
\end{align*}
In view of the conservation of $\MM[\ah]$ (see \eqref{l^2-bdd}) and with the sharp Fourier cutoff $\Ps=\Ps_{\!<\k h}$ as defined in \eqref{Ps-def}, it follows that
\begin{align} \label{int-t-abs}
    &\;\Big|\MM\big[\Ps\ah(\tau)\big] - \MM\big[\Ps\ah(0)\big] \Big| 
    = \Big|\MM\big[\big(1-\Ps\big)\ah(\tau)\big] - \MM\big[\big(1-\Ps\big)\ah(0)\big] \Big| \\
    \ls&\; \bigg| \int_0^{\tau} \!\int_{\sum_{i=1}^4\theta_i=0} \Big[\chi(\theta_4)^2 - \chi(\theta_1)\chi(\theta_2)\chi(\theta_3)\chi(\theta_4) \Big] \widehat{\ah}(\sigma,\theta_1) \widehat{\overline{\ah}}(\sigma,\theta_2) \widehat{\ah}(\sigma,\theta_3) \widehat{\overline{\ah}}(\sigma,\theta_4) \bigg|. \no
\end{align}
It remains for us to estimate the space-time integral above on \eqref{int-t-abs}-RHS.

We now break every copy of $u$ into a sum of dyadic constituents $u_i$ ($i=1,2,3,4$) using Littlewood–Paley type projections, each localized (with a smooth cutoff function) in spatial frequency space to have Fourier support $\abs{\theta_i-n\pi}\simeq\abs{\sin(\theta_i)}\sim N_ih$ ($n\in\Z$, $N_i=2^{k_i}$ for $k_i\in\{0\}\cup\N$ so that $0<N_ih\leq1$).
Note that we do not need to decompose the frequencies $\abs{\sin(\theta)} \leq h$ here since we are considering the case $\k\gg1$.

When we estimate the dyadic shells that constitute the integral in \eqref{int-t-abs}, generally we will first seek a pointwise bound on the multiplier $m(\vec{\theta})$ (for $\vec{\theta}\in\big\{(\theta_1,\theta_2,\theta_3,\theta_4)\in (\R/2\pi\Z)^4:\theta_1+\theta_2+\theta_3+\theta_4=0\big\}$): 
\begin{align}
    |m(\vec{\theta})| :=
    \abs{\chi(\theta_4)^2 - \chi(\theta_1)\chi(\theta_2)\chi(\theta_3)\chi(\theta_4)}
    \ls B(N_1,N_2,N_3,N_4).
\end{align}
We then factor this bound out of the integral piece, with the absolute value not entering the integrand. 
The remaining quadrilinear form (involving only the dyadic pieces of $u_i$) can be estimated by reversing the Parseval identity, followed by using H\"older’s inequality and the bilinear Strichartz estimates. 

Our aim here is to establish the bound with a decay factor of $\k^{-\frac{1}{2}+}(N_1N_2N_3N_4)^{0-}$. Once we have this, we will be able to sum over all the dyadic pieces $u_i$ 
since the bounds decay geometrically in these frequencies $N_i$.
To achieve this goal, our strategy is to analyze different frequency interactions in the paraproduct under restriction $\sum_{i=1}^4\theta_i=0$, exploiting cancellations and smallness from the multiplier $m(\vec{\theta})$ 
as well as the refined Strichartz estimates (depending on the relative sizes of the frequencies involved).

By the symmetry of the multiplier $m(\vec{\theta})$ in $\theta_1,\theta_2,\theta_3$, and the fact that the bilinear Strichartz estimate \eqref{bilinear-Strichartz-est} 
allows complex conjugates on either factor, we may assume (without loss of generality) that $N_1\geq N_2\geq N_3$. Note also that $\sum_{i=1}^4\theta_i=0$ in the integration of \eqref{int-t-abs}-RHS so that $N_4\ls N_1$. Hence, it suffices to obtain a decay factor of $\k^{-\frac{1}{2}+}N_1^{0-}$. 

Observe that the multiplier $m(\vec{\theta})$ vanishes unless 
$\abs{\sin(\theta_1)}\geq\k h$ and $\abs{\sin(\theta_4)}<\k h$.
Thus, the only summands that contribute to \eqref{int-t-abs}-RHS are those where $N_1\gtrsim\k$ and $N_1\gg N_4$. 
For this class of frequency interactions, we pull out the trivial pointwise bound $|m(\vec{\theta})|\leq1$. 
Then we undo the Parseval formula \eqref{Parseval-paraproduct} and use Cauchy-Schwarz to pair $u_1u_4$ and $u_2u_3$ in $L_t^2\l_n^2$: 
\begin{align} \label{I-piece}
    &\quad\; \bigg| \int_0^{\tau} \!\int_{\sum_{i=1}^4\theta_i=0} m(\vec{\theta})\, 
    \widehat{\ah_1}(\sigma,\theta_1) \widehat{\overline{\ah_2}}(\sigma,\theta_2) \widehat{\ah_3}(\sigma,\theta_3) \widehat{\overline{\ah_4}}(\sigma,\theta_4) \bigg|\\
    &\leq \nm{u_1u_4}_{L_{\tau}^2\l^2_n([-h^{-2}T, h^{-2}T]\times\Z)} \nm{u_2u_3}_{L_{\tau}^2\l^2_n([-h^{-2}T, h^{-2}T]\times\Z)}. \no
\end{align}
Applying the bilinear Strichartz estimate \eqref{bilinear-Strichartz-est} in Lemma~\ref{Lem:bilinear-Strichartz} to the first factor above, followed by using the equation \eqref{1-c.DNLS}, H\"older’s inequality, the uniform $\l^2_n$-bound \eqref{l^2-bdd} in Proposition~\ref{Prop:l^2-GWP}, and the assumption of the Strichartz bound \eqref{Strichartz-assumption}, 
we have 
\begin{align} \label{I-piece-1}
    &\quad \nm{u_1u_4}_{L_{\tau}^2\l^2_n([-h^{-2}T, h^{-2}T]\times\Z)} \\
    &\ls (N_1h)^{-\frac{1}{2}}
    \Big[\nm{u}_{L_{\tau}^\infty\l^2_n([-h^{-2}T, h^{-2}T]\times\Z)} + \nm{\big(\ui\dt+\DD\big)u}_{L_{\tau}^1\l^2_n([-h^{-2}T, h^{-2}T]\times\Z)} \Big]^2 \no \\
    &= (N_1h)^{-\frac{1}{2}}
    \Big[\nm{\ah(0)}_{\l_n^2} + \bnm{\CC[\ah]}_{L_{\tau}^1\l^2_n([-h^{-2}T, h^{-2}T]\times\Z)} \Big]^2 \no \\
    &\ls (N_1h)^{-\frac{1}{2}}
    \Big[\nm{\ah(0)}_{\l_n^2} + (h^{-2}T)^{\frac{1}{2}} \nm{u}_{L_{\tau}^\infty\l^2_n([-h^{-2}T, h^{-2}T]\times\Z)} \nm{u}_{L_{\tau}^4\l^\infty_n([-h^{-2}T, h^{-2}T]\times\Z)}^2 \Big]^2 \no \\ 
    &\ls 
    (1+T) N_1^{-\frac{1}{2}} \nm{\ah(0)}_{\l_n^2}. \no
\end{align}
For the second factor of \eqref{I-piece}-RHS, we once again use H\"older, together with \eqref{l^2-bdd} and \eqref{Strichartz-assumption} to bound 
\begin{align} \label{I-piece-2}
    \nm{u_2u_3}_{L_{\tau}^2\l^2_n([-h^{-2}T, h^{-2}T]\times\Z)} 
    &\ls (h^{-2}T)^{\frac{1}{4}} \nm{u}_{L_{\tau}^\infty\l^2_n([-h^{-2}T, h^{-2}T]\times\Z)} \nm{u}_{L_{\tau}^4\l^\infty_n([-h^{-2}T, h^{-2}T]\times\Z)} \\
    &\ls 
    T^{\frac{1}{4}} \nm{\ah(0)}_{\l_n^2}. \no
\end{align}

Collecting \eqref{I-piece}--\eqref{I-piece-2},
we see that 
\begin{align}
    \text{\eqref{I-piece}-LHS} 
    \ls 
    T^{\frac{1}{4}} (1+T)\, N_1^{-\frac{1}{2}} \nm{\ah(0)}_{\l_n^2}^2.
\end{align}
By our assumptions on the $N_i$ and their relative size in comparison to the parameter $\k$, we have the decay factor $N_1^{-\frac{1}{2}} \ls \k^{-\frac{1}{2}+}N_1^{0-} \ls \k^{-\frac{1}{2}+}(N_1N_2N_3N_4)^{0-}$, which allows us to sum over all the $N_i$ to conclude that 
(uniformly for $\abs{t}\leq T$)
\begin{align} \label{ACL-bound}
    \text{\eqref{int-t-abs}-RHS} 
    \ls 
    T^{\frac{1}{4}} (1+T)\, \k^{-\frac{1}{2}+} \nm{\ah(0)}_{\l_n^2}^2.
\end{align}
This yields our desired estimates \eqref{ACL} and \eqref{ACL'}, and hence completes the proof of Proposition~\ref{Prop:ACL}.
\end{proof}


\bigskip

\section{Persistence of Schr\"odinger-like Dynamics} \label{Sec:Schrodinger-Dynamics}


As discussed earlier in Subsection~\ref{SubSec:Dispersive-Strichartz}, the presence of inflection points unique to the discrete dispersion relation poses a major obstacle to 
sustaining the continuum Schr\"odinger dynamics for the discrete solutions. 
Our remedy to this issue is to  
suppress the influence of the inflection points 
by controlling the dynamical evacuation of $\l^2$-mass among frequencies for a long period of time. 
This frequency localization control is strong enough to enable us to recover the same desired Strichartz estimates as the continuum Schr\"odinger propagator via a bootstrap scheme.

\begin{proposition}
\label{Prop:Schrodinger-Dynamics}
Under the hypotheses of Theorem~\ref{Thm:main}, specifically, 
let $\ah$ be the solution to \eqref{1-c.DNLS} with initial data \eqref{E:initial data}, 
and let $\psi^h,\phi^h$ be the corresponding continuum reconstruction of this solution built via \eqref{psi^h},\eqref{phi^h}.

Then for any $T>0$, we control the frequency-restricted $\l^2$-norm as 
\begin{align} \label{E:suppress}
    \sup_{\abs{\tau}\leq h^{-2}T} \bnm{ \Ps_{\geq\lambda\,}\ah(\tau)}_{\l^{2}_n} \ls_{T} 
    \lambda^{-\frac{1}{4}+} h^{\frac{1}{4}-} \nm{\ah(0)}_{\l^2_{n}},
\end{align}
where $\Ps_{\geq\lambda}:= 1-\Ps_{\!<\lambda}$ is the projection operator defined in Definition~\ref{Def:Ps}.

Moreover, we have Strichartz bounds 
\begin{align} \label{E:Strichartz}
    \nm{\ah}_{L^6_{\tau}\l^6_n([-h^{-2}T, h^{-2}T]\times\Z)} + \nm{\ah}_{L^4_{\tau}\l^\infty_n ([-h^{-2}T, h^{-2}T]\times\Z)} \ls_{T}  \nm{\ah(0)}_{\l^2_n},
\end{align}
as well as 
\begin{align} \label{E:Strichartz'}
\bnm{\ph}_{L^6_{t,x}([-T,T]\times\r)}+ \bnm{\phi^h}_{L^6_{t,x}([-T,T]\times\r)}
\ls_{T} \|\psi_0\|_{L^2} + \|\phi_0\|_{L^2} .
\end{align}
Here all the implicit constants are independent of $h$. 
\end{proposition}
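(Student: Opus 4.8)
## Proof Proposal for Proposition~\ref{Prop:Schrodinger-Dynamics}

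\textbf{Overall strategy.} The plan is a bootstrap (continuity) argument on the time interval. The three estimates \eqref{E:suppress}, \eqref{E:Strichartz}, \eqref{E:Strichartz'} are interlocked: the frequency-localization bound \eqref{E:suppress} follows from the almost conservation law (Proposition~\ref{Prop:ACL}), but Proposition~\ref{Prop:ACL} \emph{assumes} the Strichartz bound \eqref{Strichartz-assumption}, which is exactly the $L^4_\tau\l^\infty_n$ half of \eqref{E:Strichartz}. Conversely, \eqref{E:Strichartz} will be recovered from the frequency-restricted discrete Strichartz estimates (Lemma~\ref{P:loc Strichartz}, which needs frequencies away from $\pm\frac\pi2$) together with a Duhamel/nonlinear estimate, and the ``error'' from the high-frequency tail is controlled precisely by \eqref{E:suppress}. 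So the argument must run all three simultaneously: I would set up a bootstrap quantity, say
\begin{align*}
    \mathcal{B}(T') := \nm{\ah}_{L^6_{\tau}\l^6_n([-h^{-2}T', h^{-2}T']\times\Z)} + \nm{\ah}_{L^4_{\tau}\l^\infty_n([-h^{-2}T', h^{-2}T']\times\Z)},
\end{align*}
assume $\mathcal{B}(T')\leq C_0\nm{\ah(0)}_{\l^2_n}$ on a maximal subinterval, feed this into Proposition~\ref{Prop:ACL} to get \eqref{E:suppress} (which converts the hypothesis $h^{1/2}$ of \eqref{Strichartz-assumption} via $\nm{\ah(0)}_{\l^2_n}\sim h^{1/2}$ from \eqref{ah0-h'}), then use \eqref{E:suppress} to close \eqref{E:Strichartz} with a strictly better constant, and conclude by continuity that $T'$ can be taken up to $T$.

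\textbf{Deriving \eqref{E:suppress} from the ACL.} Write $\ah = \Ps_{\!<\lambda}\ah + \Ps_{\geq\lambda}\ah$. Since $\MM[\Ps_{\geq\lambda}\ah] = \nm{\Ps_{\geq\lambda}\ah}_{\l^2_n}^2$, the content of \eqref{ACL'} with $\k = \lambda/h$ is exactly
\begin{align*}
    \bnm{\Ps_{\geq\lambda}\ah(\tau)}_{\l^2_n}^2 \leq \bnm{\Ps_{\geq\lambda}\ah(0)}_{\l^2_n}^2 + O\big((\lambda/h)^{-\frac12+}\big)\nm{\ah(0)}_{\l^2_n}^2.
\end{align*}
The initial-data term vanishes (or is negligible): by \eqref{ph,qh_0} and \eqref{E:initial data'} the initial data is supported in $|\sin\theta|\lesssim h^\gamma$, so for $\lambda \gtrsim h^\gamma$ one has $\Ps_{\geq\lambda}\ah(0)=0$ (and for smaller $\lambda$ one just uses $\bnm{\Ps_{\geq\lambda}\ah(0)}_{\l^2_n}\leq\nm{\ah(0)}_{\l^2_n}$, which is already better than the claimed bound when $\lambda h^{-1}$ is not large — I'd need to check the ranges, but since we only care about $\lambda$ with $\lambda h^{-1}\gg1$, this is fine). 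Taking square roots gives $\bnm{\Ps_{\geq\lambda}\ah(\tau)}_{\l^2_n}\lesssim (\lambda/h)^{-\frac14+}\nm{\ah(0)}_{\l^2_n} = \lambda^{-\frac14+}h^{\frac14-}\nm{\ah(0)}_{\l^2_n}$, which is \eqref{E:suppress}. This step is essentially bookkeeping once Proposition~\ref{Prop:ACL} is granted.

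\textbf{Closing \eqref{E:Strichartz}.} Split $\ah = \Ps\ah + \Ps_{\geq}\ah$ with $\Ps = \Ps_{\!<3/4}$ as in Lemma~\ref{P:loc Strichartz}. For the high part, the dispersive decay is bad, but $\Ps_{\geq 3/4}\ah$ is tiny in $\l^2_n$ uniformly in time by \eqref{E:suppress} with $\lambda = \frac34$, giving a gain of $h^{\frac14-}$; combined with the $\l^2\hookrightarrow\l^p$ embedding (Lemma~\ref{Lem:l^p-embedding}) and a crude time-integration over the long interval $|\tau|\leq h^{-2}T$, this contributes something like $h^{-2\cdot(\text{exponent})}\cdot h^{\frac14-}\nm{\ah(0)}_{\l^2_n}$ — here I must check the scaling balances, using that $\nm{\ah(0)}_{\l^2_n}\sim h^{1/2}$ and that the relevant $L^q_\tau$ norm over a $h^{-2}T$-interval of the trivially-embedded piece carries a favorable power; the point is that the $\frac14-$ gain in $h$ from \eqref{E:suppress} must beat whatever negative power of $h$ the brute-force estimate of the non-dispersive part costs — this is precisely the ``$\frac14-$ responsible for closure'' noted in the remark after Proposition~\ref{Prop:ACL}. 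For the low part $\Ps\ah$, apply Lemma~\ref{P:loc Strichartz} with the admissible pairs $(6,6)$ and $(4,\infty)$: $\nm{\Ps\ah}_{L^6_\tau\l^6_n \cap L^4_\tau\l^\infty_n} \lesssim \nm{\ah(0)}_{\l^2_n} + \nm{\Ps\,\CC[\ah]}_{L^{6/5}_\tau\l^{6/5}_n}$ (dual pair; or a more convenient dual admissible pair), and the cubic term is estimated by Hölder in spacetime: $\nm{|\ah|^2\ah}_{L^{q'}_\tau\l^{p'}_n} \lesssim \nm{\ah}_{L^6_\tau\l^6_n}^2\nm{\ah}_{L^q_\tau\l^p_n}$ or similar, so that it is controlled by $\mathcal{B}(T')^2\cdot\mathcal{B}(T')$ — but crucially with a \emph{small} constant, because the Hölder step over the long interval produces a power of $h$ (the same scaling factor $h^{1-1/p-2/q}$ from \eqref{lqp-Lqp}) that, once we plug in $\mathcal{B}\sim h^{1/2}$, makes the cubic contribution $\ll \nm{\ah(0)}_{\l^2_n}$. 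Putting these together, $\mathcal{B}(T') \leq C\nm{\ah(0)}_{\l^2_n} + o(1)\,\mathcal{B}(T') + o(1)\,\mathcal{B}(T')^3/\nm{\ah(0)}_{\l^2_n}^2$, and a standard continuity/bootstrap argument upgrades $\mathcal{B}(T')\leq C_0\nm{\ah(0)}_{\l^2_n}$ to hold on all of $[-T,T]$ with a smaller $C_0$. Finally \eqref{E:Strichartz'} follows from \eqref{E:Strichartz} by the norm-equivalence \eqref{lqp-Lqp} with $(q,p)=(6,6)$: $\bnm{\ph}_{L^6_{t,x}} + \bnm{\qh}_{L^6_{t,x}} \simeq h^{-(1-1/6-2/6)}\nm{\ah}_{L^6_\tau\l^6_n} = h^{-1/2}\nm{\ah}_{L^6_\tau\l^6_n} \lesssim h^{-1/2}\nm{\ah(0)}_{\l^2_n} \lesssim \|\psi_0\|_{L^2}+\|\phi_0\|_{L^2}$ using \eqref{ah0-h'}.

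\textbf{Main obstacle.} The delicate point is the circular dependence and making the bootstrap genuinely close: Proposition~\ref{Prop:ACL} requires \eqref{Strichartz-assumption} as a \emph{hypothesis}, which is the $L^4_\tau\l^\infty_n\lesssim h^{1/2}$ bound — part of the very conclusion \eqref{E:Strichartz} we are proving. The resolution is the continuity method: one assumes \eqref{Strichartz-assumption} holds with a generous constant on a subinterval, derives \eqref{E:suppress} there, re-derives \eqref{E:Strichartz} with a \emph{strictly smaller} constant, and invokes connectedness of the maximal good subinterval. The part requiring genuine care (not just routine Hölder) is verifying that every power of $h$ lines up so that the $\lambda^{-1/4+}h^{1/4-}$ gain from \eqref{E:suppress}, after the unavoidable loss of a negative power of $h$ from trivially estimating the non-dispersive high-frequency piece over the stretched time interval $[-h^{-2}T, h^{-2}T]$, still yields a net gain — i.e., that $\frac14-$ is on the right side of the relevant threshold. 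Everything else (choice of Strichartz pairs, the cubic Hölder bookkeeping, passing to $\ph,\qh$) is standard.
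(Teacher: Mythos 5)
Your overall architecture — a bootstrap interlocking Proposition~\ref{Prop:ACL} with the Strichartz bounds, the derivation of \eqref{E:suppress} from the ACL, and the passage to \eqref{E:Strichartz'} via \eqref{lqp-Lqp} — matches the paper. But the step you explicitly flag as ``must check the scaling balances'' is precisely where your plan fails. For the high-frequency piece you propose the trivial embedding $\l^2\hookrightarrow\l^\infty$ plus H\"older in time over $[-h^{-2}T,h^{-2}T]$. For the $L^4_\tau\l^\infty_n$ norm this costs $(h^{-2}T)^{1/4}\sim h^{-1/2}$, while \eqref{E:suppress} only supplies a gain of $h^{1/4-}$; the net is $h^{-1/4-}\to\infty$. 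For $L^6_\tau\l^6_n$ the loss is $h^{-1/3}$ against the same $h^{1/4-}$ gain, again divergent. So the crude estimate does not close. The paper's resolution is to use the \emph{unrestricted} discrete Strichartz estimates of Lemma~\ref{Lem:l.d-Strichartz} on the high-frequency terms — with the discrete-admissible pairs $(q,p)=(6,\infty)$ and $(9,6)$ — so that the H\"older-in-time loss drops to $(h^{-2}T)^{1/12}\sim h^{-1/6}$ and $(h^{-2}T)^{1/18}\sim h^{-1/9}$ respectively, both of which \emph{are} beaten by $h^{1/4-}$. This is the genuinely non-routine idea your sketch is missing: the weak (Airy-like) dispersion near the inflection points is still just strong enough, when combined with the almost conservation law, to close the argument, whereas no dispersion at all is not.

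Two further points. First, you decompose the solution $\ah=\Ps\ah+\Ps_{\geq}\ah$ and estimate the forcing $\Ps\,\CC[\ah]$ by H\"older; but to make the high-frequency part of the \emph{nonlinear} Duhamel term small you also need the observation that $\CC[\widetilde\Ps\ah]$ remains frequency-localized near $\{0,\pi\}$ (sums of three low frequencies stay low mod $2\pi$), so that only the remainder $\CC[\ah]-\CC[\widetilde\Ps\ah]$ — which carries at least one factor of $(1-\widetilde\Ps)\ah$ and hence the $h^{1/4-}$ gain — must be fed through the lossy discrete Strichartz estimate. The paper's two-cutoff splitting ($\Ps=\Ps_{<3/4}$, $\widetilde\Ps=\Ps_{<1/100}$) of the Duhamel formula implements exactly this. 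Second, your single continuity argument on a maximal subinterval of $[-T,T]$ cannot reach arbitrary $T$: the cubic contribution is $\sim T'^{1/2}C_0^2\|\ah(0)\|_{\l^2_n}$ with no compensating power of $h$ (the factors $h^{-1}$ from the time length and $h$ from two copies of $\|\ah\|_{L^4_\tau\l^\infty_n}^2$ cancel exactly), so the improved constant cannot beat $C_0$ once $T$ is large. The paper instead closes the bootstrap on a short window $T_*$ and iterates $1+[T/T_*]$ times, using $\l^2$-conservation to keep the window length uniform; your framework needs the same modification.
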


\begin{proof}
First note that by the uniform $\l^2_n$-bound \eqref{l^2-bdd} from Proposition~\ref{Prop:l^2-GWP}, together with Plancherel, H\"older, and Lemma~\ref{Lem:l^p-embedding}, 
we have the naive bounds 
\begin{align} \label{l^2-bdd-P}
\bnm{ \Ps_{\geq\lambda\,}\ah}_{L_{\tau}^\infty \ell_n^2(\R\times\Z)}
\leq \|\ah\|_{L_{\tau}^\infty \ell_n^2(\R\times\Z)} = \|\ah(0)\|_{\ell_n^2}
\end{align}
and
\begin{align} \label{2:16}
&\nm{\ah}_{L^6_{\tau}\l^6_n([-h^{-2}T, h^{-2}T]\times\Z)} + \nm{\ah}_{L^4_{\tau}\l^\infty_n ([-h^{-2}T, h^{-2}T]\times\Z)} \\
\lesssim&\; (h^{-2}T)^{\frac16} \|\ah\|_{L_{\tau}^\infty \ell_n^2(\R\times\Z)} +(h^{-2}T)^{\frac14} \|\ah\|_{L_{\tau}^\infty \ell_n^2(\R\times\Z)} \notag\\
=&\; \bigl[(h^{-2}T)^{\frac16}+(h^{-2}T)^{\frac14} \bigr] \|\ah(0)\|_{\ell_n^2}. \notag
\end{align}
In order to gain the extra positive $h$-power for \eqref{l^2-bdd-P} and eliminate the dependence on $h$ in \eqref{2:16}, 
we will run a bootstrap-continuity argument involving the control of frequency localization (by the almost conservation law proposition) 
combined with Strichartz estimates (exploiting the dispersive effect from the discrete Schr\"odinger propagator $e^{\ui t\DD}$).

With a view to closing the bootstrap argument, we will estimate our solutions $\ah$ in these norms of interest 
on the time interval $[-h^{-2}T_*, h^{-2}T_*]$ with $T_*\leq T$, which may later be chosen small at first and then extended to reach an arbitrarily large $T$ by iteration. 
Note that \eqref{l^2-bdd-P},\eqref{2:16} also guarantee that the quantities being bootstrapped are initially finite.

For clarity, we set (see \eqref{ah0-h'} and \eqref{ah0-h})
\begin{align} \label{Bootstrap_initial}
    \lim_{h\rt0}\, h^{-\frac{1}{2}}\nm{\ah(0)}_{\l_n^2} = \|\psi_0\|_{L^2} + \|\phi_0\|_{L^2} =: C_0 
    \quad\text{so that}\quad \nm{\ah(0)}_{\l_n^2} \sim h^{\frac{1}{2}} C_0 
    \quad\text{for }\, 0<h\ll1 .  
\end{align}
And our bootstrap assumptions (corresponding to \eqref{E:suppress} and \eqref{E:Strichartz} respectively) read 
\begin{align}
    \bnm{\Ps_{\geq\lambda\,}\ah}_{L_{\tau}^\infty \ell_n^2([-h^{-2}T_*, h^{-2}T_*]\times\Z)} 
    &\ls \lambda^{-\frac{1}{4}+} h^{\frac{1}{4}-} h^{\frac{1}{2}} C , \label{Bootstrap_assumption-1} \\
    \nm{\ah}_{L^4_{\tau}\l^\infty_n ([-h^{-2}T_*, h^{-2}T_*]\times\Z)} 
    &\ls h^{\frac{1}{2}} C . \label{Bootstrap_assumption-2}
\end{align}

We start with the frequency-restricted $\l^2$-norm in \eqref{E:suppress}. 
Applying Proposition~\ref{Prop:ACL} with $\k=h^{-1}\lambda$ 
under \eqref{Bootstrap_assumption-2} and \eqref{Bootstrap_initial},
we have 
\begin{align} \label{bootstrap-est-1}
    \sup_{\abs{\tau}\leq h^{-2}T_*} \bnm{ \Ps_{\geq\lambda\,}\ah(\tau)}_{\l^{2}_n} \ls 
    \bnm{\Ps_{\geq\lambda\,}\ah(0)}_{\l^2_n} +
    T_*^{\frac{1}{8}} C_0^{\frac{1}{2}}C^{\frac{1}{2}} \big(1+T_*^{\frac{1}{2}}C^2\big) \lambda^{-\frac{1}{4}+} h^{\frac{1}{4}-} \nm{\ah(0)}_{\l^2_{n}},
\end{align}
where the detailed prefactor of $T_*,C,C_0$ can be traced from \eqref{I-piece-1} and \eqref{I-piece-2}. 

Next, we turn to the Strichartz norms 
in \eqref{E:Strichartz}.
Let $\Ps=\Ps_{\!<\frac34}$ abbreviate the Fourier cutoff recalled in \eqref{Ps-def} and let $\widetilde{\Ps}=\Ps_{\!<\frac1{100}}$ 
denote a narrower version. 
Noticing that $\Ps \CC[\widetilde{\Ps}\ah]= \CC[\widetilde{\Ps}\ah]$ for the cubic nonlinearity $\CC_n[\ah]:=\pm2|\ah_n|^2\ah_n$ in \eqref{1-c.DNLS}, 
we employ the Duhamel representation of the solution and split the resulting linear and nonlinear terms (by the frequency projections) as follows:
(here $\tau=h^{-2}t$ and $\sigma=h^{-2}s$ for $|t|\leq T_*$ and $0\leq s/t \leq 1$)
\begin{align} \label{Strichartz-Duhamel}
\ah(\tau)&=e^{\ui \tau\DD}\ah(0)-\ui\int_0^{\tau}e^{\ui (\tau-\sigma)\DD}\,\CC\bigl[\ah(\sigma)\bigr]\,\ud\sigma\\
&=e^{\ui \tau\DD}\Ps\ah(0) + e^{\ui \tau\DD}(1-\Ps)\ah(0) \no \\
&\quad -\ui\int_0^{\tau} e^{\ui (\tau-\sigma)\DD}\Ps\CC\bigl[\widetilde{\Ps}\ah(\sigma)\bigr]\,\ud\sigma 
-\ui\int_0^{\tau}e^{\ui (\tau-\sigma)\DD}\Bigl\{\CC\bigl[\ah(\sigma)\bigr] -\CC\bigl[\widetilde{\Ps}\ah(\sigma)\bigr] \Bigr\}\,\ud\sigma. \no
\end{align}

For the first and third terms of \eqref{Strichartz-Duhamel}-RHS above, the presence of the frequency truncation $\Ps$ allows us to invoke $e^{\ui t\DD}\Ps \sim e^{\ui t\D}$ and apply directly the linear Strichartz estimates Lemma~\ref{P:loc Strichartz}, noting that our desired $(6,6)$ and $(4,\infty)$ are both continuum Schr\"odinger Strichartz pairs, but not discrete ones.
Here we treat both spacetime norms simultaneously, which are taken over $[-h^{-2}T_*, h^{-2}T_*]\times\Z$ (when bounding the $L^4_{\tau} \ell^\infty_n$ norm) or $[-h^{-2}T, h^{-2}T]\times\Z$ (for $L^6_{\tau} \ell^6_n$ bounds).\footnote{\,To avoid notation cluster, we shall sometimes drop the domain of norm when there is no risk of confusion.} 
To begin with,
\begin{align}\label{3:05}
    \bigl\| e^{\ui \tau\DD}\Ps\ah(0) \bigr\|_{L^6_{\tau}\l^6_n \cap L^4_{\tau} \ell^\infty_n} \ls \|\Ps\ah(0)\|_{\l_n^2} \leq \|\ah(0)\|_{\ell_n^2}.
\end{align}
Additionally, 
choosing $(\tilde q, \tilde p)=(\infty,2)$ in Lemma~\ref{P:loc Strichartz}, followed by using H\"older, \eqref{l^2-bdd} and \eqref{Bootstrap_assumption-2},
we have 
\begin{align} \label{3:07}
&\nm{\int_0^{\tau} e^{\ui (\tau-\sigma)\DD}\Ps\CC\bigl[\widetilde{\Ps}\ah(\sigma)\bigr]\,\ud\sigma}_{L^6_{\tau}\l^6_n\cap L^4_{\tau} \ell^\infty_n([-h^{-2}T_*, h^{-2}T_*]\times\Z)} \\
\ls&\, \bnm{\Ps \CC[\widetilde{\Ps}\ah]}_{L^{1}_{\tau}\l^{2}_n}
\ls \bnm{\CC[\ah]}_{L^{1}_{\tau}\l^{2}_n([-h^{-2}T_*, h^{-2}T_*]\times\Z)} \no\\
\ls&\: (h^{-2}T_*)^{\frac{1}{2}} \nm{\ah}_{L^{\infty}_{\tau}\l_n^2} \nm{\ah}_{L^4_{\tau} \l^\infty_n}^2 
= (h^{-2}T_*)^{\frac{1}{2}} \nm{\ah(0)}_{\l_n^2}\nm{\ah}_{L^4_{\tau} \l^\infty_n}^2 \no\\
\ls&\: (h^{-2}T_*)^{\frac{1}{2}} (h^{\frac{1}{2}}C)^2 \nm{\ah(0)}_{\l_n^2} 
= T_*^{\frac{1}{2}} C^2 \nm{\ah(0)}_{\l_n^2} .\no
\end{align}

Turning to the remaining two terms in the Duhamel expansion \eqref{Strichartz-Duhamel} (which corresponds to $e^{\ui t\DD}(1-\Ps)$ where the dispersion is relatively weak), we first narrow our focus to just the $L^4_{\tau} \ell^\infty_n$ norm.  
Upon using H\"older,
we apply Lemma~\ref{Lem:l.d-Strichartz} instead with $(q,p) = (6,\infty)$ 
and find
\begin{align}\label{4:06}
\bnm{e^{\ui \tau\DD}(1-\Ps)\ah(0)}_{L^4_{\tau}\l^{\infty}_n([-h^{-2}T_*, h^{-2}T_*]\times\Z)}
&\lesssim (h^{-2}T_*)^{\frac{1}{12}} \nm{e^{\ui \tau\DD}(1-\Ps)\ah(0)}_{L^6_{\tau}\l^\infty_n}\\
&\lesssim (h^{-2}T_*)^{\frac{1}{12}} \bnm{(1-\Ps)\ah(0)}_{\l_n^2} \notag\\
&\ls(h^{-2}T_*)^{\frac{1}{12}} \big(h^{\frac{1}{4}-} h^{\frac{1}{2}}C\big) .\no
\end{align}
Proceeding in a parallel fashion, we deduce 
\begin{align}\label{4:08}
&\nm{\int_0^{\tau}e^{\ui (\tau-\sigma)\DD}\Bigl\{\CC\bigl[\ah(\sigma)\bigr] -\CC\bigl[\widetilde{\Ps}\ah(\sigma)\bigr] \Bigr\}\,\ud\sigma}_{L^4_{\tau}\l^{\infty}_n([-h^{-2}T_*, h^{-2}T_*]\times\Z)} \\
\ls&\: (h^{-2}T_*)^{\frac{1}{12}}\nm{\int_0^{\tau}e^{\ui (\tau-\sigma)\DD}\Bigl\{\CC\bigl[\ah(\sigma)\bigr] -\CC\bigl[\widetilde{\Ps}\ah(\sigma)\bigr] \Bigr\}\,\ud\sigma}_{L^6_{\tau}\l^\infty_n}\no\\
\ls&\: (h^{-2}T_*)^{\frac{1}{12}} \bnm{\CC[\ah] -\CC[\widetilde{\Ps}\ah]}_{L^{1}_{\tau}\l^{2}_n}\no\\
\ls&\: (h^{-2}T_*)^{\frac{7}{12}} \bnm{(1-\widetilde{\Ps})\ah}_{L^{\infty}_{\tau}\l_n^2} \nm{\ah}_{L^4_{\tau} \l^\infty_n}^2 \no\\
\ls&\: (h^{-2}T_*)^{\frac{7}{12}} \big(h^{\frac{1}{4}-} h^{\frac{1}{2}}C\big) (h^{\frac{1}{2}}C)^2 .\no
\end{align}
For the last steps of \eqref{4:06} and \eqref{4:08}, we
take advantage of the small redistribution of $(1-\Ps)$-part with \eqref{Bootstrap_assumption-1}
to compensate for the losses in the discrete Strichartz estimates. 

Summarizing \eqref{Strichartz-Duhamel} and \eqref{3:05}--\eqref{4:08}, 
we obtain 
\begin{align} \label{bootstrap-est-2}
\nm{\ah}_{L^4_{\tau}\l^{\infty}_n([-h^{-2}T_*, h^{-2}T_*]\times\Z)} 
\ls \big(1+T_*^{\frac{1}{2}} C^2\big) \nm{\ah(0)}_{\l^2_n}
+ h^{\frac{1}{12}-} \big(T_*^{\frac{1}{12}}+T_*^{\frac{7}{12}}C^2\big) h^{\frac{1}{2}}C .
\end{align}
Combining the (a priori) estimates \eqref{bootstrap-est-2} with \eqref{bootstrap-est-1},
and taking $T_*$ sufficiently small,
a bootstrap argument 
yields  
\begin{align} 
    \sup_{\abs{\tau}\leq h^{-2}T_*} \bnm{ \Ps_{\geq\lambda\,}\ah(\tau)}_{\l^{2}_n} 
    &\ls 
    \bnm{\Ps_{\geq\lambda\,}\ah(0)}_{\l^2_n} +
     \lambda^{-\frac{1}{4}+} h^{\frac{1}{4}-} \nm{\ah(0)}_{\l^2_{n}}, \label{bootstrap-T*-1}\\
    \nm{\ah}_{L^4_{\tau}\l^{\infty}_n([-h^{-2}T_*, h^{-2}T_*]\times\Z)}
    &\ls \nm{\ah(0)}_{\l^2_n} , \label{bootstrap-T*-2}
\end{align}
with the implicit constants here being universal.
Note also that \eqref{bootstrap-est-1} and \eqref{bootstrap-est-2} imply the norms being bootstrapped are continuous in time.

Given any $T>0$, we iterate this argument above $(1+ \!\big[\frac T{T_*}\big])$ many times\footnote{\,The $\l_n^2$-norm conservation guarantees the increment $T_*$ of time for each iteration is uniform.}
and sum up \eqref{bootstrap-T*-1},\eqref{bootstrap-T*-2} in each iteration 
to conclude that 
\begin{align} 
    \sup_{\abs{\tau}\leq h^{-2}T} \bnm{ \Ps_{\geq\lambda\,}\ah(\tau)}_{\l^{2}_n} 
    &\ls_T 
     \lambda^{-\frac{1}{4}+} h^{\frac{1}{4}-} \nm{\ah(0)}_{\l^2_{n}}, \label{bootstrap-T-1}\\
    \nm{\ah}_{L^4_{\tau}\l^{\infty}_n([-h^{-2}T, h^{-2}T]\times\Z)}
    &\ls_T \nm{\ah(0)}_{\l^2_n} . \label{bootstrap-T-2}
\end{align}
In particular, for \eqref{bootstrap-T-1}, our setting of the initial data \eqref{E:initial data},\eqref{E:initial data'} ensures $\bnm{\Ps_{\geq\lambda\,}\ah(0)}_{\l^2_n}=0$ for small enough $0<h\ll1$ and some certain $0<\lambda<1$ independent of $h$.
Note however that in subsequent iterations the corresponding `initial term' does not necessarily vanish, but inherits the bound from prior iterations and so adds up to the dependence on $T$. 

Now, let us move on to the $L^6_{\tau} \ell^6_n$ and $L^6_{t,x}$ bounds, which we will treat utilizing the closed estimates \eqref{bootstrap-T-1},\eqref{bootstrap-T-2} and so need not to argue via bootstrap.  
Henceforth, all spacetime norms will be taken over the full time interval $[-h^{-2}T, h^{-2}T]$ (or $[-T, T]$ for $L^6_{t,x}$-norm). 

From \eqref{3:05} we already have
\begin{align} \label{3:05'}
    \bigl\| e^{\ui \tau\DD}\Ps\ah(0) \bigr\|_{L^6_{\tau}\l^6_n} \ls 
    \|\ah(0)\|_{\ell_n^2}.
\end{align}
For the term in \eqref{3:07}, 
using instead \eqref{bootstrap-T-2} with \eqref{l^2-bdd} 
we immediately see 
\begin{align} \label{3:07'}
&\nm{\int_0^{\tau} e^{\ui (\tau-\sigma)\DD}\Ps\CC\bigl[\widetilde{\Ps}\ah(\sigma)\bigr]\,\ud\sigma}_{L^6_{\tau}\l^6_n} \ls (h^{-2}T)^{\frac{1}{2}} \nm{\ah(0)}_{\l_n^2}\nm{\ah}_{L^4_{\tau} \l^\infty_n}^2
\ls_{T} \nm{\ah(0)}_{\l_n^2} . 
\end{align}
Mimicking \eqref{4:06} with taking $(q,p) = (9,6)$ 
in Lemma~\ref{Lem:l.d-Strichartz}, we find
\begin{align}\label{4:06'}
\bnm{e^{\ui \tau\DD}(1-\Ps)\ah(0)}_{L^6_{\tau}\l^{6}_n}
&\lesssim (h^{-2}T)^{\frac{1}{18}} \nm{e^{\ui \tau\DD}(1-\Ps)\ah(0)}_{L^9_{\tau}\l^6_n}\\
& \lesssim (h^{-2}T)^{\frac{1}{18}} \bnm{(1-\Ps)\ah(0)}_{\l_n^2} \notag\\
&\ls (h^{-2}T)^{\frac{1}{18}} h^{\frac{1}{4}-} \nm{\ah(0)}_{\l^2_{n}}
\ls_{T} \nm{\ah(0)}_{\l^2_{n}} ,\no
\end{align}
where we used \eqref{bootstrap-T-1} for the last line.
Likewise, paralleling \eqref{4:08}, we have 
\begin{align}\label{4:08'}
&\nm{\int_0^{\tau}e^{\ui (\tau-\sigma)\DD}\Bigl\{\CC\bigl[\ah(\sigma)\bigr] -\CC\bigl[\widetilde{\Ps}\ah(\sigma)\bigr] \Bigr\}\,\ud\sigma}_{L^6_{\tau}\l^{6}_n} \\
\ls&\: (h^{-2}T)^{\frac{1}{18}}\nm{\int_0^{\tau}e^{\ui (\tau-\sigma)\DD}\Bigl\{\CC\bigl[\ah(\sigma)\bigr] -\CC\bigl[\widetilde{\Ps}\ah(\sigma)\bigr] \Bigr\}\,\ud\sigma}_{L^9_{\tau}\l^6_n}\no\\
\ls&\: (h^{-2}T)^{\frac{1}{18}} \bnm{\CC[\ah] -\CC[\widetilde{\Ps}\ah]}_{L^{1}_{\tau}\l^{2}_n}\no\\
\ls&\: (h^{-2}T)^{\frac{5}{9}} \bnm{(1-\widetilde{\Ps})\ah}_{L^{\infty}_{\tau}\l_n^2} \nm{\ah}_{L^4_{\tau} \l^\infty_n}^2 \no\\
\ls&_{T}\: (h^{-2}T)^{\frac{5}{9}} h^{\frac{1}{4}-} \nm{\ah(0)}_{\l^2_{n}} \nm{\ah(0)}_{\l^2_{n}}^2 
\ls_{T} \nm{\ah(0)}_{\l^2_{n}} .\no
\end{align}
The last line here was an application of \eqref{bootstrap-T-1},\eqref{bootstrap-T-2}, and \eqref{l^2-bdd}.

Inserting \eqref{3:05'}--\eqref{4:08'} back into \eqref{Strichartz-Duhamel} gives 
\begin{align} \label{L6l6}
    \nm{\ah}_{L^6_{\tau}\l^{6}_n([-h^{-2}T, h^{-2}T]\times\Z)}
    \ls_T \nm{\ah(0)}_{\l^2_n} .
\end{align}
Finally, in view of \eqref{lqp-Lqp} from Lemma~\ref{Lem:norm-relation} 
and \eqref{ah0-h'}, 
the above yields 
\begin{align} \label{E:Strichartz''}
\bnm{\ph}_{L^6_{t,x}([-T,T]\times\r)}+ \bnm{\phi^h}_{L^6_{t,x}([-T,T]\times\r)}
&\simeq h^{-\frac{1}{2}} \nm{\ah}_{L^6_{\tau}\l^{6}_n([-h^{-2}T, h^{-2}T]\times\Z)} \\
&\ls_{T} h^{-\frac{1}{2}} \nm{\ah(0)}_{\l^2_n}
\ls \|\psi_0\|_{L^2} + \|\phi_0\|_{L^2} , \no
\end{align}
which together with \eqref{bootstrap-T-1},\eqref{bootstrap-T-2},\eqref{L6l6} completes the proof of Proposition~\ref{Prop:Schrodinger-Dynamics}.
\end{proof}

\begin{remark}
For the case of single frequency component, we may alternatively use the conservation of $\EE[\ah]$ to obtain the frequency localization control uniformly for all time,
and then in turn use this result to close the proof of Strichartz bounds.
\end{remark}

\bigskip

\section{Precompactness}

This section is dedicated to the proof of the following precompactness result:

\begin{proposition}[Precompactness in $C_tL_x^2$]\label{thm:pre-compactness}
Under the hypotheses of Theorem~\ref{Thm:main}, we have 
for $T>0$ fixed, the two families of functions $\Psi:=\big\{\ph(t,x): 0<h\leq h_0\big\}$ and $\Phi:=\big\{\qh(t,x): 0<h\leq h_0\big\}$ are precompact in $C([-T,T];L^2_x(\R))$.
\end{proposition}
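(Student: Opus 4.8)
The plan is to establish precompactness in $C([-T,T];L^2_x(\R))$ via the classical trio of properties: uniform boundedness, equicontinuity in time, and compactness of the spatial slices (which amounts to uniform tightness plus spatial equicontinuity, i.e.\ a uniform frequency-localization at high \emph{and} low frequencies, via a Fr\'echet--Kolmogorov-type criterion combined with Arzel\`a--Ascoli in $t$). Concretely, I would:

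\textbf{Step 1: Uniform boundedness.} By \eqref{psi^h phi^h}, \eqref{unitary-map}, and the conservation \eqref{l^2-bdd} from Proposition~\ref{Prop:l^2-GWP}, one has $\|\ph(t)\|_{L^2_x}^2 = h^{-1}\|P_{|\theta|<\pi/2}\,\ah(h^{-2}t)\|_{\l_n^2}^2 \le h^{-1}\|\ah(0)\|_{\l_n^2}^2 \le \|\psi_0\|_{L^2}^2 + \|\phi_0\|_{L^2}^2$, uniformly in $t$ and $h$; likewise for $\qh$. This gives a uniform bound in $C_tL^2_x$.

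\textbf{Step 2: Equicontinuity in time.} Here I would use the equation. Differentiating $\widehat{\ph}(t,\xi) = \widehat{\ah}(h^{-2}t,h\xi)\id_{(-\pi/2,\pi/2)}(h\xi)$ in $t$ and substituting \eqref{1-c.DNLS-Fourier}, the dispersive symbol $h^{-2}(e^{\ui h\xi}+e^{-\ui h\xi}-2) = -4h^{-2}\sin^2(h\xi/2) \to -\xi^2$ is bounded on the support $|h\xi|<\pi/2$ by $O(\xi^2)$, while the nonlinear contribution is controlled (after rescaling) by the cubic term estimated through H\"older and the Strichartz bounds \eqref{E:Strichartz},\eqref{E:Strichartz'} from Proposition~\ref{Prop:Schrodinger-Dynamics}. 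Splitting $\ph = P_{\le R}\ph + P_{>R}\ph$: on low frequencies $|\xi|\le R$ the $t$-derivative is bounded in $L^2_x$ uniformly (so these pieces are uniformly Lipschitz in $t$, hence equicontinuous); the high-frequency tail $P_{>R}\ph$ is made small uniformly in $h$ and $t$ by Step~3. Combining, $\{\ph\}$ is equicontinuous in $C([-T,T];L^2_x)$.

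\textbf{Step 3 (the crux): uniform spatial frequency-localization --- equicontinuity and tightness.} This is the hard part, and it is exactly where the almost conservation law enters. For the \emph{high-frequency} (spatial equicontinuity) part: $P_{>R}\ph$ corresponds, on the lattice side, to $\Ps_{\ge \lambda}\ah$ with $\lambda \simeq Rh$, so by Proposition~\ref{Prop:ACL} / \eqref{E:suppress} we get $\|P_{>R}\ph(t)\|_{L^2_x} \simeq h^{-1/2}\|\Ps_{\ge Rh}\ah(h^{-2}t)\|_{\l_n^2} \lesssim_T h^{-1/2}(Rh)^{-1/4+}h^{1/4-}\|\ah(0)\|_{\l_n^2} \lesssim_T R^{-1/4+}$, which $\to 0$ as $R\to\infty$ uniformly in $h$ and $|t|\le T$; and similarly for $\qh$ (this is where the phase-rotation in \eqref{phi^h'} is essential, as it moves the $\theta=\pi$ bump to $\theta=0$). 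For \emph{tightness} (control of the mass escaping to spatial infinity $|x|\to\infty$): I would establish a weighted/local-smoothing-type bound, e.g.\ controlling $\|\chi_{|x|>\mathcal{R}}\ph(t)\|_{L^2_x}$ uniformly, by propagating a bound on $\sum_n \langle hn\rangle^{s}|\ah_n(h^{-2}t)|^2$ (or a localized version thereof) for some small $s>0$ --- using the equation, the finite speed of `propagation at scale $h$', and again the Strichartz bounds to absorb the nonlinear term; alternatively one can transfer tightness of $(\psi_0,\phi_0)$ through the construction and a Duhamel/a-priori argument. The main obstacle is precisely this tightness step: unlike equicontinuity (which the almost conservation law hands us directly), there is no conserved quantity controlling spatial localization for \eqref{1-c.DNLS}, so one must run a separate argument — most likely a Gronwall-type estimate on a slowly-growing spatial weight, using that the discrete Laplacian's commutator with a weight $\langle hn\rangle^s$ is $O(h)$-small at the relevant scale and that the nonlinearity is controlled in $L^1_\tau\l^2_n$ by $(h^{-2}T)^{1/2}\|\ah(0)\|_{\l^2}\|\ah\|_{L^4_\tau\l^\infty_n}^2$ via \eqref{E:Strichartz}.

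\textbf{Step 4: Conclusion.} With uniform boundedness, time-equicontinuity, and — for each fixed $t$ — precompactness of $\{\ph(t,\cdot)\}_h$ and $\{\qh(t,\cdot)\}_h$ in $L^2_x(\R)$ (from Step~3 via Fr\'echet--Kolmogorov: uniformly $L^2$-bounded, uniformly frequency-localized, uniformly spatially tight), the Arzel\`a--Ascoli theorem for the Banach-space-valued case yields precompactness of $\Psi$ and $\Phi$ in $C([-T,T];L^2_x(\R))$, which is the assertion of Proposition~\ref{thm:pre-compactness}.
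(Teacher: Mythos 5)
Your overall architecture matches the paper's: the M.~Riesz $L^2$-analogue of Arzel\`a--Ascoli reduces precompactness to uniform boundedness, equicontinuity, and tightness; uniform boundedness comes from mass conservation and the norm relation \eqref{lqp-Lqp}; spatial equicontinuity (uniform smallness of $P_{|\xi|\ge\kappa}\ph$, $P_{|\xi|\ge\kappa}\qh$) comes from the almost conservation law exactly as in your Step~3; and time equicontinuity comes from the equation plus the Strichartz bounds. One small imprecision in Step~2: the low-frequency pieces are not uniformly \emph{Lipschitz} in $t$, because the nonlinear contribution $h^{-2}\CC[\ah]$ to $\partial_t\ph$ is not bounded in $\l^2_n$ pointwise in time; only after integrating the Duhamel term and applying H\"older with the $L^6_\tau\l^6_n$ bound does one get an increment $\ls_T(h^{-2}|s|)^{1/2}\nm{\ah}_{L^6_\tau\l^6_n}^3\ls_T |s|^{1/2}h^{1/2}$, i.e.\ H\"older-$\tfrac12$ continuity in time. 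That is what the paper obtains and it still suffices.

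The substantive gap is in the tightness step, and the argument as you describe it would fail. The commutator of $\DD$ with a weight varying on physical scale $R$ (or with $\langle hn\rangle^{s}$) is indeed $O(h/R)$ per unit \emph{lattice} time $\tau$, but the evolution runs over $|\tau|\le h^{-2}T$, so the accumulated error in the Gr\"onwall estimate is $O(h^{-1}T/R)\nm{\ah}_{\l^2_n}^2$, which diverges as $h\to 0$ for any fixed $R$. Equivalently, the group velocity $|\omega_{{\rm d}}'(\theta)|=2|\sin\theta|$ is $O(1)$ in lattice units, hence $O(h^{-1})$ in physical units per unit $t$: there is no $h$-independent finite speed of propagation for the full solution, and mass can in principle travel a physical distance $\sim h^{-1}T$. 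The missing idea --- which the paper's proof implements --- is to \emph{first} discard the medium frequencies $\Ps_{\geq\kappa h\,}\ah$ using the almost conservation law (they carry uniformly small mass, by the same estimate as your spatial equicontinuity), and only \emph{then} run the weighted energy/Gr\"onwall argument on the frequency-localized piece $\varphi_R\,\widetilde{\Ps}\ah$, where $\widetilde{\Ps}$ is a smooth cutoff to $|\sin\theta|\ls\kappa h$. On that piece the quantity the commutator pairs against (the discrete difference near $\theta=0$, or the discrete sum near $\theta=\pi$) is $O(\kappa h)\nm{\ah}_{\l^2_n}$, so the accumulated commutator error is only $O(\kappa T/R)\nm{\ah}_{\l^2_n}^2$, which is beaten by choosing $R\gg_{\ep}\kappa T$. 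Without this preliminary frequency localization the commutator estimate loses a factor of $h^{-1}$ and the tightness bound does not close.
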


\begin{proof}
By an $L^2$ analogue of the Arzel\`a--Ascoli theorem due to M. Riesz \cite{Riesz1933}, precompactness of the two families 
is equivalent to establishing
uniform boundedness, equicontinuity, and tightness properties. Specifically, we will demonstrate:

\noindent\underline{\emph{Uniform boundedness}}: 
There exists $C>0$ such that
\begin{align}\label{unif bdd=}
\sup_{0<h\leq h_0} \Big\{\bnm{\ph}_{L^\infty_t L^2_x([-T,T]\times\R)} + \bnm{\qh}_{L^\infty_t L^2_x([-T,T]\times\R)} \Big\} \leq C.
\end{align}
\noindent\underline{\emph{Equicontinuity}}: 
For any $\ep>0$, there exists $\delta>0$ so that whenever $\abs{s}+\abs{y}<\delta$,
\begin{align} \label{equi}
\nm{\ph(t+s,x+y)-\ph(t,x)}_{L^2_x} + \nm{\qh(t+s,x+y)-\qh(t,x)}_{L^2_x} < \ep
\end{align}
uniformly for $t\in[-T,T]$ (with $t+s\in [-T,T]$) and for $0<h\leq h_0$.\\[2mm]
\noindent\underline{\emph{Tightness}}: 
For any $\ep>0$, there exists $R>0$ such that
\begin{align} \label{tightness}
\sup_{0<h\leq h_0} \sup_{|t|\leq T}\, \int_{\abs{x}\geq R} \Big[\,\babs{\ph(t,x)}^2 + \babs{\qh(t,x)}^2 \Big] \,\ud x<\ep.
\end{align}


\begin{remark}
Note that all these estimates above 
should be uniform for $t\in[-T,T]$ and $0<h\leq h_0$.
Nevertheless, it suffices to prove these properties 
only for very small $h\in(0,h_1]$ where $h_1$ may depend on $\ep$, as $h\mapsto(\ph,\qh)$ defines a continuous mapping from $(0,h_0]$ to $C([-T,T];L^2_x(\r))$ (in view of the well-posedness of \eqref{1-c.DNLS} 
discussed in Proposition~\ref{Prop:l^2-GWP}) 
and thus precompactness and so the three conditions automatically hold on any closed interval of the form $[h_1,h_0]$. 
\end{remark}

Of the three properties above, uniform boundedness \eqref{unif bdd=} follows readily from \eqref{l^2-bdd} in Proposition~\ref{Prop:l^2-GWP} combined with \eqref{lqp-Lqp} by Lemma~\ref{Lem:norm-relation}: 
\begin{align} \label{unif bdd==}
\bnm{\ph}_{L^\infty_t L^2_x([-T,T]\times\R)} + \bnm{\qh}_{L^\infty_t L^2_x([-T,T]\times\R)} 
\simeq h^{-\frac12} \|\ah\|_{L_{\tau}^\infty \ell_n^2([-h^{-2}T, h^{-2}T]\times\Z)} \ls  \|\psi_0\|_{L^2} + \|\phi_0\|_{L^2},
\end{align}
which is uniformly for $0<h\leq h_0$. 

Next, we will demonstrate the equicontinuity statement \eqref{equi} by treating separately the space and time variables in the following two subsections. 
Finally, we will prove the tightness property \eqref{tightness} in Subsection~\ref{SubSec:Tightness}.

\medskip

\subsection{Equicontinuity in Space} \label{SubSec:Space-Equicontinuity}

We aim to show the equicontinuity property in the spatial variable, namely that for any $\ep>0$, there exists $\delta>0$ so that whenever $\abs{y}<\delta$,
\begin{align} \label{equi-space}
\bnm{\ph(t,x+y)-\ph(t,x)}_{L^2_x} + \bnm{\qh(t,x+y)-\qh(t,x)}_{L^2_x} < \ep
\end{align}
uniformly for $t\in[-T,T]$ and $0<h\leq h_0$.

Firstly, we claim that the above is equivalent to tightness on the Fourier side:\footnote{\,Though only the backward implication is needed for this proof.}
\begin{align} \label{equi-space-2}
\sup_{0<h\leq h_0}  \sup_{|t|\leq T} 
\Big\{\big\|P_{|\xi|\geq \kappa} \,\psi^h(t)\big\|_{L_x^2} + \big\|P_{|\xi|\geq \kappa} \,\phi^h(t)\big\|_{L_x^2} \Big\} \rt0 \quad\text{as }\, \k\rt\infty ,
\end{align}
where $P_{|\xi|\geq \kappa}$ denotes the sharp projection to frequencies $|\xi|\geq \kappa$.

Indeed, by the Plancherel identity \eqref{Plancherel}, 
we may write 
\begin{align} \label{douding}
&\bnm{\ph(t,x+y)-\ph(t,x)}_{L^2_x}^2 + \bnm{\qh(t,x+y)-\qh(t,x)}_{L^2_x}^2 \\ 
=& \int_{\R} \babs{\ue^{\ui y\xi}-1}^2 \Big[\babs{\wph(t,\xi)}^2 + \babs{\wqh(t,\xi)}^2\Big] \tfrac{\ud\xi}{2\pi}
= \int_{\R} 4\sin^2\!\big(\tfrac{y\xi}{2}\big) \Big[\babs{\wph(t,\xi)}^2 + \babs{\wqh(t,\xi)}^2\Big] \tfrac{\ud\xi}{2\pi} \no\\
\ls& \int_{|\xi|< \kappa} \abs{y\xi}^2 \Big[\babs{\wph(t,\xi)}^2 + \babs{\wqh(t,\xi)}^2\Big] \tfrac{\ud\xi}{2\pi}
+ \int_{|\xi|\geq \kappa} \Big[\babs{\wph(t,\xi)}^2 + \babs{\wqh(t,\xi)}^2\Big] \tfrac{\ud\xi}{2\pi} \no\\
\leq&\, \abs{y}^2\!\k^2 \Big[\big\|\psi^h(t)\big\|_{L_x^2}^2 + \big\|\phi^h(t)\big\|_{L_x^2}^2 \Big] 
+ \Big\{\big\|P_{|\xi|\geq \kappa} \,\psi^h(t)\big\|_{L_x^2}^2 + \big\|P_{|\xi|\geq \kappa} \,\phi^h(t)\big\|_{L_x^2}^2 \Big\} . \no
\end{align}
If assuming \eqref{equi-space-2} and considering the uniform boundedness property \eqref{unif bdd=}, for any $\ep>0$ we may first choose $\k=\k(\ep)$ sufficiently large (independently of $h,t$) so that the second term above (corresponding to the high-frequency part) is small, e.g. $<\ep^2\!/4$, 
and then take $\abs{y}<\delta=\delta(\ep)$ sufficiently small (independently of $h,t$) so that the first term above (corresponding to the low-frequency part) is small, e.g. $<\ep^2\!/4$ as well. 
This guarantees that \eqref{equi-space} holds uniformly for $t\in[-T,T]$ and $0<h\leq h_0$. 

For the converse, we notice that 
\begin{align*}
    \int_{\R} \babs{\ue^{\ui y\xi}-1}^2 \k\ue^{-2\k\abs{y}}\,\ud y 
    = \frac{2\xi^2}{\xi^2+4\k^2} \gs 1-\mathds{1}_{[-\k,\k]}(\xi) .
\end{align*}
Integrating the above against $\Big[\babs{\wph(t,\xi)}^2 + \babs{\wqh(t,\xi)}^2\Big]$ in $\xi$, followed by using Fubini and Plancherel (recalling the equalities used in \eqref{douding}),
we get 
\begin{align} \label{mengni}
    &\int_{\R} \Big[\bnm{\ph(t,x+y)-\ph(t,x)}_{L^2_x}^2 + \bnm{\qh(t,x+y)-\qh(t,x)}_{L^2_x}^2 \Big] \k\ue^{-2\k\abs{y}}\,\ud y \\
    &\gs \big\|P_{|\xi|\geq \kappa} \,\psi^h(t)\big\|_{L_x^2}^2 + \big\|P_{|\xi|\geq \kappa} \,\phi^h(t)\big\|_{L_x^2}^2 . \no
\end{align} 
The equicontinuity assertion \eqref{equi-space} implies that for any $\ep>0$ (arbitrarily small), there exists $\delta=\delta(\ep)>0$ (independent of $h,t$) so that we may split the integral on the LHS of \eqref{mengni} and bound the first part as 
\begin{align*}
    \int_{\abs{y}<\delta} \Big[\bnm{\ph(t,x+y)-\ph(t,x)}_{L^2_x}^2 + \bnm{\qh(t,x+y)-\qh(t,x)}_{L^2_x}^2 \Big] \k\ue^{-2\k\abs{y}}\,\ud y 
    \ls \ep^2 \!\int_{\R} \k\ue^{-2\k\abs{y}}\,\ud y 
    \ls \ep^2 ,
\end{align*}
while for the remaining portion of the integral, using again the uniform $L^2$ bound \eqref{unif bdd=} we see that 
\begin{align*}
    &\int_{\abs{y}\geq\delta} \Big[\bnm{\ph(t,x+y)-\ph(t,x)}_{L^2_x}^2 + \bnm{\qh(t,x+y)-\qh(t,x)}_{L^2_x}^2 \Big] \k\ue^{-2\k\abs{y}}\,\ud y \\
    &\ls \sup_{0<h\leq h_0}\! \Big\{\bnm{\ph}_{L^\infty_t L^2_x}^2 + \bnm{\qh}_{L^\infty_t L^2_x}^2 \Big\}\cdot\! \int_{\abs{y}\geq\delta} \k\ue^{-2\k\abs{y}}\,\ud y
    \ls \ue^{-2\k\delta} \rt0 \quad\text{as }\, \k\rt\infty 
\end{align*}
uniformly in $h$ and $t$,
and hence \eqref{equi-space-2} follows.

Now turning our attention to proving \eqref{equi-space-2}, a straightforward computation using \eqref{psi^h'},\eqref{phi^h'} and Plancherel reveals that\footnote{\,In fact, here we may restrict attention to the case $\kappa h<\frac\pi2$ (which allows for the following derivation) since, in the other case when $\kappa h\geq\frac\pi2$, \eqref{pangfu}-LHS is automatically zero due to the fact that $\supp(\widehat{\ph}),\supp(\widehat{\qh})\subseteq\bigl[-\frac{\pi}{2h},\frac{\pi}{2h}\bigr]$.} 
\begin{align} \label{pangfu}
    \big\|P_{|\xi|\geq \kappa} \,\psi^h(t)\big\|_{L_x^2}^2 + \big\|P_{|\xi|\geq \kappa} \,\phi^h(t)\big\|_{L_x^2}^2
    = \int_{\kappa h\leq |h\xi| \leq \pi - \kappa h} \bigl| \widehat{\ah}(h^{-2}t, h\xi) \bigr|^2 \tfrac{\ud\xi}{2\pi}
    = h^{-1} \MM\big[\Ps_{\geq\sin(\k h)}\ah(h^{-2}t)\big] ,
\end{align}
where $\MM[\Ps_{\!\geq\sin(\k h)}\ah]$ is the frequency-truncated $\l^2$-mass for the projection $\Ps_{\geq\lambda}$ with $\lambda=\sin(\k h)$ as defined in Definition~\ref{Def:Ps}.
Thanks to the almost conservation of $\MM[\Ps_{\geq\lambda}\ah]$ from Proposition~\ref{Prop:ACL} (combined with \eqref{E:Strichartz} and \eqref{ah0-h'} so that \eqref{Strichartz-assumption} is satisfied), 
we have 
\begin{align} \label{feimeng}
     \sup_{|t|\leq T}\, \MM\big[\Ps_{\geq\sin(\k h)}\ah(h^{-2}t)\big]
     &\ls_{T}  \MM\big[\Ps_{\geq\sin(\k h)}\ah(0)\big]
     + \big[\tfrac{\sin(\k h)}{h} \big]^{-\frac{1}{2}+} \nm{\ah(0)}_{\l_n^2}^2. 
\end{align}
Combining \eqref{pangfu} and \eqref{feimeng} with noting that $\sin(\theta)\simeq\theta$ for $\abs{\theta}\leq\frac{\pi}{2}$,
followed by using \eqref{ph,qh_0} and \eqref{ah0-h'},
we thus deduce 
\begin{align*}
    &\,\sup_{|t|\leq T} 
\Big\{\big\|P_{|\xi|\geq \kappa} \,\psi^h(t)\big\|_{L_x^2}^2 + \big\|P_{|\xi|\geq \kappa} \,\phi^h(t)\big\|_{L_x^2}^2 \Big\} \\
\ls&_{T\,} \big\|P_{|\xi|\geq \kappa} P_{\leq N} \psi_0 \big\|_{L_x^2}^2 + \big\|P_{|\xi|\geq \kappa} P_{\leq N} \phi_0 \big\|_{L_x^2}^2
+ h^{-1} \k^{-\frac{1}{2}+} h \big( \|\psi_0\|_{L^2}^2 + \|\phi_0\|_{L^2}^2 \big) \\
\leq&\; \big\|P_{|\xi|\geq \kappa} \psi_0 \big\|_{L_x^2}^2 + \big\|P_{|\xi|\geq \kappa} \phi_0 \big\|_{L_x^2}^2 
     + \k^{-\frac{1}{2}+} \big( \|\psi_0\|_{L^2}^2 + \|\phi_0\|_{L^2}^2 \big) ,
\end{align*}
here all the implicit constants are independent of $h$ and the RHS above is also free of $h$.
Therefore, \eqref{equi-space-2} now follows from the dominated convergence theorem,
and hence the question of spatial equicontinuity is settled.

\medskip

\subsection{Equicontinuity in Time}

We now turn to equicontinuity in the time variable, namely, 
for any $\ep>0$, there exists $\delta>0$ so that whenever $\abs{s}<\delta$,
\begin{align} \label{equi-time}
\bnm{\ph(t+s,x)-\ph(t,x)}_{L^2_x} + \bnm{\qh(t+s,x)-\qh(t,x)}_{L^2_x} < \ep
\end{align}
uniformly for $t\in[-T,T]$ (with $t+s\in [-T,T]$) and $0<h\leq h_0$.

By Plancherel and the reconstruction formulas \eqref{psi^h'},\eqref{phi^h'},
we may translate this expression of $\big(\ph,\qh\big)$ into the estimate on our discrete solution $\ah$:
\begin{align} \label{equi-time-2}
&\;\bnm{\ph(t+s,x)-\ph(t,x)}_{L^2_x}^2 + \bnm{\qh(t+s,x)-\qh(t,x)}_{L^2_x}^2 \\
\simeq&\; h^{-1} \Bigl\| \mathds{1}_{|\theta|<\frac\pi2}\Bigl[\wah \big(h^{-2}(t+s),\theta\big)-\wah \big(h^{-2}t,\theta\big)\Bigr]\Bigr\|_{L_{\theta}^2}^2 \no\\
&\;+ h^{-1} \Bigl\| \mathds{1}_{|\theta-\pi|<\frac\pi2}\Bigl[e^{4\ui h^{-2}s} \,\wah \big(h^{-2}(t+s),\theta\big)- \wah \big(h^{-2}t,\theta\big)\Bigr]\Bigr\|_{L_{\theta}^2}^2 \no\\
\ls&\; h^{-1} \Bigl\| \mathds{1}_{|\theta|<\frac\pi2} \bigl[e^{-4 \ui h^{-2}s \sin^2(\frac{\theta}{2})} - 1 \bigr] \wah \big(h^{-2}t,\theta\big) \Bigr\|_{L_{\theta}^2}^2
+ h^{-1} \Bigl\| \mathds{1}_{|\theta-\pi|<\frac\pi2} \bigl[ e^{4 \ui h^{-2}s \cos^2(\frac{\theta}{2})} - 1 \bigr] \wah \big(h^{-2}t,\theta\big) \Bigr\|_{L_{\theta}^2}^2 \no\\
&\; + h^{-1} \bigg\| \int_{h^{-2}t}^{h^{-2}(t+s)}\! e^{-4\ui [h^{-2}(t+s)-\sigma] \sin^2(\frac{\theta}{2})\,} \widehat{\CC[\ah]}(\sigma,\theta)\,\ud\sigma \bigg\|_{L_{\theta}^2}^2 . \no
\end{align}
Here in the last step we represent $\wah \big(h^{-2}(t+s),\theta\big)$ using Duhamel’s formula for the \eqref{1-c.DNLS} evolutions (on the Fourier side) 
\begin{align*}
    \wah \big(h^{-2}(t+s),\theta\big) = e^{-4 \ui h^{-2}s \sin^2(\frac{\theta}{2})\,} \wah \big(h^{-2}t,\theta\big) 
    - \ui \int_{h^{-2}t}^{h^{-2}(t+s)}\! e^{-4\ui [h^{-2}(t+s)-\sigma] \sin^2(\frac{\theta}{2})\,} \widehat{\CC[\ah]}(\sigma,\theta)\,\ud\sigma ,
\end{align*}
where $\CC_n[\ah]:=\pm2|\ah_n|^2\ah_n$,
and then regroup the resulting expression into the linear and nonlinear parts.

To estimate the first two linear terms on \eqref{equi-time-2}-RHS, for $\k \gg 1$ to be chosen shortly, we evaluate the contributions of the regions $|\theta|< \kappa h$, $|\theta-\pi|< \kappa h$, and $\kappa h\leq|\theta|\leq\pi-\kappa h$ (mod $2\pi$) separately: 
\begin{align} \label{equi-time-lin}
&\Bigl\| \mathds{1}_{|\theta|<\frac\pi2} \bigl[e^{-4 \ui h^{-2}s \sin^2(\frac{\theta}{2})} - 1 \bigr] \wah \big(h^{-2}t,\theta\big) \Bigr\|_{L_{\theta}^2}^2
+ \Bigl\| \mathds{1}_{|\theta-\pi|<\frac\pi2} \bigl[ e^{4 \ui h^{-2}s \cos^2(\frac{\theta}{2})} - 1 \bigr] \wah \big(h^{-2}t,\theta\big) \Bigr\|_{L_{\theta}^2}^2 \\
\ls& \int_{\abs{\theta}<\k h} \bigl| e^{-4 \ui h^{-2}s \sin^2(\frac{\theta}{2})} - 1 \bigr|^2 \bigl| \wah \big(h^{-2}t,\theta\big)\bigr|^2 \,\tfrac{\ud\theta}{2\pi}
+ \int_{\abs{\theta-\pi}<\k h} \bigl| e^{4 \ui h^{-2}s \cos^2(\frac{\theta}{2})} - 1 \bigr|^2 \bigl| \wah \big(h^{-2}t,\theta\big)\bigr|^2 \,\tfrac{\ud\theta}{2\pi} \no\\
&+ \int_{\kappa h\leq|\theta|\leq\pi-\kappa h} \bigl| \wah \big(h^{-2}t,\theta\big)\bigr|^2 \,\tfrac{\ud\theta}{2\pi} \no\\
\ls&\, \abs{s}^2\!\k^4 \bigl\|\ah(h^{-2}t)\bigr\|_{\ell_n^2}^2 
+ h \Big[\big\|P_{|\xi|\geq \kappa} \,\psi^h(t)\big\|_{L_x^2}^2 + \big\|P_{|\xi|\geq \kappa} \,\phi^h(t)\big\|_{L_x^2}^2 \Big]
\ls \ep^2 h . \no
\end{align}
The last inequality above is guaranteed by choosing $\kappa=\kappa(\ep)$ sufficiently large in \eqref{equi-space-2}, the equivalent of spatial equicontinuity providing the high-frequency control, followed by taking $|s|<\delta=\delta(\ep)$ sufficiently small along with the uniform $\l^2$-bound \eqref{l^2-bdd}.

We then estimate the nonlinear contribution of \eqref{equi-time-2}-RHS 
using Plancherel, Minkowski's integral inequality (or alternatively, Lemma~\ref{Lem:l.d-Strichartz} with $(q,p)=(\tilde q, \tilde p)=(\infty,2)$) and H\"older's inequality, followed by \eqref{E:Strichartz} in Proposition~\ref{Prop:Schrodinger-Dynamics} and \eqref{l^2-bdd} in Proposition~\ref{Prop:l^2-GWP}:
\begin{align} \label{equi-time-non}
&\: \bigg\| \int_{h^{-2}t}^{h^{-2}(t+s)}\! e^{-4\ui [h^{-2}(t+s)-\sigma] \sin^2(\frac{\theta}{2})\,} \widehat{\CC[\ah]}(\sigma,\theta)\,\ud\sigma \bigg\|_{L_{\theta}^2}^2 \\
\ls&\: \bnm{\CC[\ah]}_{L^{1}_{\tau}\l^{2}_n([h^{-2}t,h^{-2}(t+s)]\times\Z)}^2 
\ls \Big[\big(h^{-2}|s|\big)^{\frac{1}{2}} \nm{\ah}_{L^6_{\tau}\l^6_n([-h^{-2}T, h^{-2}T]\times\Z)}^3 \Big]^2 \no\\
\ls&_{T}\; h^{-2}|s| \nm{\ah(0)}_{\l_n^2}^6
\ls \ep^2 h , \no
\end{align}
provided $|s|<\delta=\delta(\ep)$ is taken sufficiently small.

Finally, collecting \eqref{equi-time-2} and \eqref{equi-time-lin},\eqref{equi-time-non} settles \eqref{equi-time}.
Besides, the fact that all the implicit constants in our estimates above are independent of $t,h$ ensures the uniformity for $t\in[-T,T]$ and $0<h\leq h_0$.

\medskip

\subsection{Tightness} \label{SubSec:Tightness}

Lastly, we intend to show that for any $\ep>0$, there exists $R>0$ such that
\begin{align} \label{tightness'}
\sup_{0<h\leq h_0} \sup_{|t|\leq T}\, \int_{\abs{x}\geq R} \Big[\,\babs{\ph(t,x)}^2 + \babs{\qh(t,x)}^2 \Big] \,\ud x<\ep.
\end{align}

We want to reduce this tightness property of $\big(\ph,\qh\big)$ to that of the orbit of solutions $\ah_n(t)$ to \eqref{1-c.DNLS}.
To this end, 
let $\varphi(\cdot)$ be a standard smooth bump function, 
from which we build a cutoff function on the lattice via $\varphi_R(n):=1-\varphi\bigl(\tfrac{nh}{R}\bigr)$.
We will also be decomposing in frequency, and so define a frequency projection $\widetilde{\Ps}$ by 
$\widehat{\widetilde{\Ps}\ah} (\theta) := \bigl[\varphi\bigl(\tfrac\theta{\kappa h}\bigr) + \varphi\bigl(\tfrac{\theta-\pi}{\kappa h}\bigr)\bigr] \widehat\ah(\theta)$, 
which is a smooth variant of $\Ps_{\!<\k h}$ (see \eqref{Ps-def}).

We first observe that the tightness \eqref{tightness'} of $\big(\ph,\qh\big)$ can be interpreted as 
\begin{align}
    \sup_{0<h\leq h_0} \sup_{|\tau|\leq h^{-2}T} \bigl\|\varphi_R\,\ah(\tau)\bigr\|_{\ell^2_n}^2<\ep h 
\end{align}
via some locality preservation property and $\l^2\rt L^2$ boundedness of the reconstruction operator $\rr$.

Since equicontinuity in space (i.e. tightness on Fourier side) \eqref{equi-space-2} (see also \eqref{pangfu}) already provides `medium'-frequency control of $\big\|(1-\widetilde{\Ps})\ah(\tau)\big\|_{\ell^2_n}$, it now remains to prove 
\begin{align}
    \sup_{0<h\leq h_0} \sup_{|\tau|\leq h^{-2}T} \bigl\|\varphi_R\,\widetilde{\Ps}\ah(\tau)\bigr\|_{\ell^2_n}^2<\ep h .
\end{align}

Next, we basically perform the energy estimate: 
From \eqref{1-c.DNLS}, we have
\begin{align} 
\partial_{\tau} \bigl\|\varphi_R\,\widetilde{\Ps}\ah(\tau)\bigr\|_{\ell^2_n}^2 
= -2\Im \sum_{n\in\Z} \varphi_R^2(n)\, \overline{\widetilde{\Ps}\ah_n(\tau)} \cdot \widetilde{\Ps}\Bigl\{ \bigl(\DD\ah\bigr)_n(\tau)- \CC_n\bigl[\ah(\tau)\bigr] \Bigr\}.
\end{align}
Then we estimate the linear and nonlinear contributions on the RHS above separately,
and finally conclude by Gr\"onwall's inequality and tightness of the initial-data term.
\end{proof}

\bigskip

\section{Convergence of the Flows} \label{Sec:Convergence}

As a consequence of Proposition~\ref{thm:pre-compactness}, 
every sequence $h_n\rt 0$ admits a subsequence $h_{n_j}\rt 0$ such that $(\pe^{h_{n_j}},\qe^{h_{n_j}})$ converges in $C([-T,T];L^2_x(\r))$ to some $(\pe^*,\qe^*)$.   
The last aspect toward proving our main theorem is to show that 
all such subsequential limits are solutions
to 
\eqref{NLS_low}-\eqref{NLS_high} 
with initial data $(\pe_0,\qe_0)$ and satisfy certain spacetime bounds. 

\begin{proposition}\label{Prop:convergence}
Under the hypotheses of Theorem~\ref{Thm:main}, let $\pe^*,\qe^*\in C([-T,T];L^2_x(\R))$ be such that
\begin{align}\label{convg}
\psi^{h_j} \to \psi^* \qtq{and} \phi^{h_j} \to\phi^* \quad \text{in \,$C([-T,T];L^2_x(\R))$}
\end{align}
along some sequence $h_j\to 0$.

Then $\pe^*,\qe^* \in L^6_{t,x}([-T,T]\times\R)$ and they 
solve \eqref{NLS_low}-\eqref{NLS_high} with initial data $(\pe_0,\qe_0)$ in the sense that 
\begin{align}
    \pe^*(t) &= e^{\ui t\D}\pe_0
    \mp2\ui\int_0^te^{\ui (t-s)\D}\Big[\big(\abs{\pe^*}^2+2\abs{\qe^*}^2\big)\pe^*\Big](s)\,\ud s,\label{duhamel 1}\\
    \phi^*(t) &= e^{-\ui t\D}\phi_0
    \mp2\ui\int_0^te^{-\ui (t-s)\D}\Big[\big(\abs{\qe^*}^2+2\abs{\pe^*}^2\big)\qe^*\Big](s)\,\ud s \label{duhamel 2}
\end{align}
in $C([-T,T];L^2_x(\R))$.
\end{proposition}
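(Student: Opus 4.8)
The plan is to pass to the limit in the Duhamel formulation of the discrete equation after transferring everything to the continuum side. First I would record that the reconstructed functions $(\psi^h,\phi^h)$ satisfy a continuum Duhamel identity: applying $\mathcal R$ (and the phase $e^{4\mathrm ih^{-2}t}$ for the $\phi^h$-piece) to the Duhamel formula for \eqref{1-c.DNLS}, and using that $e^{\mathrm it\DD}\mathcal R$ acts like the continuum propagator $e^{\mathrm it\D}$ (resp.\ $e^{-\mathrm it\D}$) up to a controlled error coming from the discrepancy $\omega_{\rm d}(\theta)\simeq -\theta^2$ near $\theta=0$ and $\omega_{\rm d}(\theta)+4\simeq(\theta-\pi)^2$ near $\theta=\pi$, one obtains for $|t|\le T$
\begin{align*}
\psi^h(t) &= e^{\mathrm it\D}\bigl[P_{\le N}\psi_0\bigr] \mp 2\mathrm i\int_0^t e^{\mathrm i(t-s)\D}\,\mathcal N_\psi^h(s)\,\mathrm ds + \mathcal E_\psi^h(t),\\
\phi^h(t) &= e^{-\mathrm it\D}\bigl[P_{\le N}\phi_0\bigr] \mp 2\mathrm i\int_0^t e^{-\mathrm i(t-s)\D}\,\mathcal N_\phi^h(s)\,\mathrm ds + \mathcal E_\phi^h(t),
\end{align*}
where $\mathcal N_\psi^h$ is the continuum representative of the resonant part of $\mathcal C_n[u^h]$ feeding the frequency region near $0$ — which, after expanding $u^h_n = h\psi^h(h^2t,hn) + e^{-4\mathrm it}(-1)^n h\phi^h(h^2t,hn)$ into the cubic nonlinearity and discarding the non-resonant $(-1)^n$-oscillating terms, should be exactly $\bigl(|\psi^h|^2+2|\phi^h|^2\bigr)\psi^h$ plus error — and similarly $\mathcal N_\phi^h$ limits to $\bigl(|\phi^h|^2+2|\psi^h|^2\bigr)\phi^h$. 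The error term $\mathcal E^h$ collects: (i) the non-resonant cubic terms, which carry a rapidly oscillating phase $e^{c\mathrm ih^{-2}t}$ and vanish by non-stationary phase / integration by parts in $s$; (ii) the symbol discrepancy between $e^{\mathrm it\DD}$ and $e^{\mathrm it\D}$, which is $O(h^{2\gamma-}\cdot\text{stuff})$ on the relevant frequency support since $|\omega_{\rm d}(\theta)+\theta^2|\lesssim|\theta|^4\lesssim (h^\gamma)^2|\theta|^2$ there; (iii) the frequency tail $\Ps_{\ge\lambda}u^h$, controlled by \eqref{E:suppress}; and (iv) the cutoff $\mathds 1_{(-\pi/2,\pi/2)}(h\xi)$ in \eqref{psi^h'}, negligible since $\widehat{\psi^h},\widehat{\phi^h}$ are already supported in $[-\pi/2h,\pi/2h]$.

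Next I would send $h_j\to0$. The linear terms converge in $C_tL^2_x$ because $P_{\le N}\psi_0\to\psi_0$ in $L^2$ and $e^{\pm\mathrm it\D}$ is a uniformly bounded group. The $L^6_{t,x}$ membership of $\psi^*,\phi^*$ is immediate: by \eqref{E:Strichartz'} the $L^6_{t,x}$-norms of $\psi^{h_j},\phi^{h_j}$ are uniformly bounded, so weak-$L^6$ limits exist and must coincide with the strong $C_tL^2_x$-limits $\psi^*,\phi^*$, giving $\psi^*,\phi^*\in L^6_{t,x}$. For the nonlinear terms I would use the Strichartz estimate for $e^{\pm\mathrm it\D}$ (Lemma~\ref{Lem:l.c-Strichartz} with dual exponent $(\tilde q',\tilde p')=(6/5,6/5)$) to bound the Duhamel integral in $C_tL^2_x$ by $\lesssim\|\mathcal N^h-\mathcal N^*\|_{L^{6/5}_{t,x}}$; since the cubic map is continuous from $L^6_{t,x}$ to $L^2_{t,x}\subset L^{6/5}_{t,x}$ on bounded time intervals — and $\psi^{h_j}\to\psi^*$ in $C_tL^2_x\cap$(bounded in $L^6_{t,x}$) implies, by interpolation, convergence in $L^p_{t,x}$ for all $2\le p<6$, hence $|\psi^{h_j}|^2\psi^{h_j}\to|\psi^*|^2\psi^*$ in $L^{6/5}_{t,x}$ — the nonlinear terms converge. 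Finally the error $\mathcal E^{h_j}\to0$ in $C_tL^2_x$ by the bounds sketched above. Combining, $\psi^*,\phi^*$ satisfy \eqref{duhamel 1}--\eqref{duhamel 2} with data $(\psi_0,\phi_0)$.

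The main obstacle I expect is \textbf{step (i) of the error analysis}: showing the non-resonant cubic interactions (those in $\mathcal C_n[u^h]$ whose output is $(-1)^n$-type but gets projected near $\theta=0$, or vice versa, e.g.\ terms like $\psi^h\phi^h\overline{\phi^h}$ with the ``wrong'' parity) genuinely disappear in the limit. These carry a phase $e^{4\mathrm ih^{-2}t}$ (or $e^{8\mathrm ih^{-2}t}$, etc.), and after rescaling time to $\tau=h^{-2}t$ one must integrate by parts in $\tau$ in the Duhamel integral $\int_0^\tau e^{\mathrm i(\tau-\sigma)\DD}e^{c\mathrm i\sigma}(\cdots)\,\mathrm d\sigma$, picking up endpoint terms of size $O(h^2)$ times an $L^\infty_\tau\ell^2_n$-bound and a volume term where $\partial_\sigma$ hits the nonlinearity, which costs $\|\partial_\tau(\cdots)\|_{L^1_\tau\ell^2_n}$ — controlled using the equation \eqref{1-c.DNLS} again and the Strichartz bounds \eqref{E:Strichartz}, at the cost of a factor $h^{-2}T$ that must be beaten by the $h^2$ gain. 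This is delicate but works because each integration by parts gains a clean power of $h^2$ against at most one power of $h^{-2}$; I would isolate it as a lemma on oscillatory Duhamel integrals. The symbol-discrepancy term (ii) is routine: $\|(e^{\mathrm it\DD}-e^{\mathrm it\D})P_{\le h^\gamma}f\|\lesssim |t|\,\|(\omega_{\rm d}+\theta^2)\widehat f\|\lesssim |t|h^{2\gamma}\|f\|$ on the rescaled interval, again absorbing the $h^{-2}T$.
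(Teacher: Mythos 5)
Your proposal follows essentially the same route as the paper's proof: start from the lattice Duhamel formula \eqref{ah-duh}, transfer it to the line via the reconstruction operator $\rr$, obtain $\pe^*,\qe^*\in L^6_{t,x}$ from the uniform bound \eqref{E:Strichartz'} together with weak lower semicontinuity, pass to the limit in the linear terms, and expand the cubic nonlinearity using \eqref{u_n-ph,qh} into resonant pieces (which produce the coupled nonlinearities) and non-resonant pieces carrying phases $e^{\pm4\ui h^{-2}t}$, $e^{\pm 8\ui h^{-2}t}$ that are killed by temporal oscillation --- your integration by parts in $\sigma$ is precisely the Riemann--Lebesgue-type argument the paper invokes. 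Your handling of the nonlinear limit via the dual Strichartz pair $(\tilde q',\tilde p')=(6/5,6/5)$ and the continuity of the cubic map from $C_tL^2_x\cap L^6_{t,x}$ into $L^{6/5}_{t,x}$ is correct and in fact more explicit than the paper, which leaves the nonlinear convergence at the level of a heuristic derivation.

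One quantitative claim in your error analysis does not hold as stated. In step (ii) you bound the symbol discrepancy on the full frequency support $|\theta|\ls h^{\gamma}$ (i.e.\ $|\xi|\ls N\sim h^{-1+\gamma}$) by $|\omega_{\rm d}(\theta)+\theta^2|\ls h^{2\gamma}\theta^2$ and assert that this absorbs the rescaled time length $h^{-2}T$; but $h^{-2}T\cdot h^{2\gamma}\theta^2 \simeq T\,h^{2\gamma}\xi^2 \simeq T\,h^{4\gamma-2}$ at the top of the support, which diverges for $\gamma<\tfrac12$, and $\gamma$ is taken arbitrarily small. The repair is the two-parameter splitting already used in the paper's spatial-equicontinuity argument: restrict first to $|\xi|\leq\kappa$ with $\kappa$ a large constant independent of $h$, where the phase discrepancy is $O(Th^2\kappa^4)\to0$, and control the complementary frequencies uniformly in $h$ via \eqref{equi-space-2} (equivalently \eqref{E:suppress}), sending $\kappa\to\infty$ only at the end. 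The same fixed-$\kappa$ restriction is what makes your integration-by-parts bookkeeping close when the $\sigma$-derivative falls on the propagator (there $\DD$ contributes $O(h^2\kappa^2)$ rather than $O(1)$ on the resonant output frequencies); the term where the derivative falls on the nonlinearity should be organized by differentiating the slowly varying profiles $\ph(h^2\sigma,hn),\qh(h^2\sigma,hn)$ rather than $\ah$ itself, so that the gain of $h^2$ is manifest. With these adjustments your argument closes and matches the paper's.
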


\begin{remark}
With Proposition~\ref{Prop:convergence} and
invoking the (conditional) uniqueness of \eqref{NLS_low}-\eqref{NLS_high} solutions constructed in $C([-T,T];L^2_x(\R))\cap L^6_{t,x}([-T,T]\times\R)$ 
by Proposition~\ref{Prop:GWP-NLS}, 
it follows that all subsequential limits agree which coincide with the unique solution $(\pe,\qe)$ to \eqref{NLS_low}-\eqref{NLS_high} with initial data $(\pe_0,\qe_0)$, 
and so the original sequence converges without passing to subsequences at all.
We therefore conclude that $\psi^h \to\psi$ and $\phi^h \to\phi$ in $C([-T,T];L^2_x(\R))$ as $h\to 0$.
For notational simplicity, we will omit the subscripts on $h$ and also simply write $(\pe,\qe)$ for $(\pe^*,\qe^*)$ in the following proof.
\end{remark}

\begin{proof}
As $\psi^h$ and $\phi^h$ converge in $C([-T,T];L^2_x(\R))$ due to Proposition~\ref{thm:pre-compactness}, they converge distributionally on $[-T,T]\times\R$.  Consequently, by the uniform $L^6_{t,x}$-bound \eqref{E:Strichartz'} and weak lower-semicontinuity of the norm, the limits $\psi$ and $\phi$ satisfy
\begin{align} \label{6 bdd}
\|\psi\|_{L^6_{t,x}([-T,T]\times\R)} + \|\phi\|_{L^6_{t,x}([-T,T]\times\R)}\lesssim_T  \|\psi_0\|_{L^2} + \|\phi_0\|_{L^2}.
\end{align}

In order to prove that $(\pe,\qe)$ satisfies \eqref{duhamel 1}-\eqref{duhamel 2}, our starting point is the Duhamel formula satisfied by the solution $\ah$ of \eqref{1-c.DNLS}: For any $|t|\leq T$,
\begin{align}\label{ah-duh}
    \ah_n(h^{-2}t) = 
    e^{\ui h^{-2}t\DD}\ah_n(0) 
    - \ui h^{-2}\!\int_0^{t} e^{\ui h^{-2}(t-s)\DD}\,\CC_n\bigl[\ah(h^{-2}s)\bigr]\,\ud s.
\end{align}
Recalling the relations \eqref{psi^h},\eqref{phi^h} (with the notation introduced in \eqref{R-def}), 
we reconstitute $\psi^h,\phi^h$ from the LHS above and find 
\begin{align}
    \psi^h(t) &= \rr\big[\uh_n(h^{-2}t)\big] \label{psi^h=}\\
    &= \rr\big[ e^{\ui h^{-2}t\DD}\ah_n(0)\big] 
    - \rr\Big\{ \ui h^{-2}\!\int_0^{t} e^{\ui h^{-2}(t-s)\DD}\CC_n\bigl[\ah(h^{-2}s)\bigr]\ud s \Big\} , \no\\
    \phi^h(t) &= e^{4\ui h^{-2}t} \,\rr\big[(-1)^n\uh_n(h^{-2}t)\big] \label{phi^h=}\\
    &= e^{4\ui h^{-2}t} \,\rr\big[(-1)^n e^{\ui h^{-2}t\DD}\ah_n(0) \big]
    - e^{4\ui h^{-2}t} \,\rr\Big\{(-1)^n \ui h^{-2}\!\int_0^{t} e^{\ui h^{-2}(t-s)\DD}\CC_n\bigl[\ah(h^{-2}s)\bigr]\ud s \Big\} . \no
\end{align}
We will establish \eqref{duhamel 1},\eqref{duhamel 2} by taking the limit as $h\rt0$ on both sides of \eqref{psi^h=},\eqref{phi^h=}.
Given that \eqref{convg} already ensures the LHS converge, it thus suffices to demonstrate convergence of the RHS, for which each term will be treated separately. 

For the convergence of the linear terms on the RHS of \eqref{psi^h=} and \eqref{phi^h=}, we have
\begin{align}
&\lim_{h\to 0}\,\Bigl\| \rr\bigl[e^{\ui h^{-2}t\DD}\ah_n(0)\bigr] - e^{\ui t\D}\pe_0   \Bigr\|_{L_t^\infty L_x^2([-T,T]\times\R)}=0,\\
&\lim_{h\to 0}\,\Bigl\|e^{4\ui h^{-2}t}  \,\rr\bigl[(-1)^n e^{\ui h^{-2}t\DD}\ah_n(0)\bigr]- e^{-\ui t\D}\phi_0   \Bigr\|_{L_t^\infty L_x^2([-T,T]\times\R)}=0.
\end{align}

Turning to the convergence of the nonlinear terms on the RHS of \eqref{psi^h=} and \eqref{phi^h=},  we intend to show that
\begin{align}
&\lim_{h\to 0}\, \rr\Big\{ h^{-2}\!\int_0^{t} e^{\ui h^{-2}(t-s)\DD}\CC_n\bigl[\ah(h^{-2}s)\bigr]\ud s \Big\} 
= \pm2\int_0^te^{\ui (t-s)\D}\Big[\big(\abs{\pe}^2+2\abs{\qe}^2\big)\pe\Big](s)\,\ud s , \label{convergence-non-1}\\
&\lim_{h\to 0}\, e^{4\ui h^{-2}t} \,\rr\Big\{(-1)^n h^{-2}\!\int_0^{t} e^{\ui h^{-2}(t-s)\DD}\CC_n\bigl[\ah(h^{-2}s)\bigr]\ud s \Big\} 
= \pm2\int_0^te^{-\ui (t-s)\D}\Big[\big(\abs{\qe}^2+2\abs{\pe}^2\big)\qe\Big](s)\,\ud s \label{convergence-non-2}
\end{align}
in $C([-T,T];L^2_x(\R))$ sense.

To provide a heuristic derivation of the limit continuum nonlinearities,
upon substituting with the relation (from \eqref{u_n-ph,qh})
\begin{align*} 
    \uh_n(h^{-2}t) = h \big[\ph(t,hn)+e^{-4\ui h^{-2}t}(-1)^n \qh(t,hn) \big],
\end{align*}
we expand out the cubic nonlinearity:
\begin{align}
    \CC_n\bigl[\ah(h^{-2}t)\bigr] :=&\; 
    \bigl[\pm2|\ah_n|^2\ah_n\bigr] (h^{-2}t) \no\\
    =&\;\pm 2h^3 \bigl|\ph(t,hn)+e^{-4\ui h^{-2}t}(-1)^n\qh(t,hn)\bigr|^2 \bigl[\ph(t,hn)+e^{-4\ui h^{-2}t}(-1)^n\qh(t,hn)\bigr] \no\\
    =&\;\pm2h^3 \Big\{ \Big[\, \babs{\ph}^2\ph + 2\babs{\qh}^2\ph + e^{-8\ui h^{-2}t}\big(\qh\big)^2\overline\ph \,\Big](t,hn) \label{non-expansion-1}\\
    &\qquad\qquad +e^{-4\ui h^{-2}t}(-1)^n \Big[\,\babs{\qh}^2\qh + 2\babs{\ph}^2\qh
    + e^{8\ui h^{-2}t}\big(\ph\big)^2\overline\qh \,\Big](t,hn)\Big\}. \label{non-expansion-2}
\end{align}
By looking at the Fourier supports, we see that only the first row of terms (in \eqref{non-expansion-1}) contributes to \eqref{convergence-non-1}-RHS, while the second row (terms in \eqref{non-expansion-2}) contributes only to \eqref{convergence-non-2}-RHS.
In particular, the two types of mixed interactions with the fast-oscillating factors in time will also drop out of the $h\rt0$ limit in the presence of time integral; 
we can employ a Riemann–Lebesgue-type lemma to justify this temporal non-resonance phenomenon.
These explain why we are eventually led to the coupled nonlinearities as in \eqref{NLS_low}-\eqref{NLS_high}.
\end{proof}

We are finally ready to prove our main result:

\begin{proof}[Proof of Theorem~\ref{Thm:main}]
As noted at the beginning of this section, 
Proposition~\ref{thm:pre-compactness} guarantees that every sequence of $h\rt 0$ admits a subsequence along which both $\pe^{h}$ and $\phi^{h}$ converge in $C([-T,T];L^2_x(\r))$.   
By Proposition~\ref{Prop:convergence}, the limiting functions lie in $\big(C_t L^2_x \cap L^6_{t,x}\big)([-T,T]\times\R)$ and solve the Duhamel formulation \eqref{duhamel 1}-\eqref{duhamel 2} of \eqref{NLS_low}-\eqref{NLS_high}.

On the other hand, by the uniqueness result from Proposition~\ref{Prop:GWP-NLS}, these integral equations admit only one solution in $C_t L^2_x \cap L^6_{t,x}$.
This implies that all subsequential limits are identical, and thus $(\psi^h,\phi^h)$ converges in $C([-T,T];L^2_x(\R))$ as $h\to 0$ without having to pass to a subsequence and the resulting limit is the unique solution $(\pe,\qe)$ to \eqref{NLS_low}-\eqref{NLS_high} with initial data $(\pe_0,\qe_0)$.

Moreover, the relation \eqref{u_n-ph,qh} together with Lemma~\ref{L:sums to int} allows us to convert this convergence \eqref{E:T:main} on the continuum side into the discrete mode of convergence described in \eqref{E:T:main-2}.
\end{proof}

\bigskip

\end{document}